\documentclass[11pt,oneside, reqno] {amsart}
\usepackage{hyperref}
\usepackage{latexsym,amsmath,amsfonts,amscd,amssymb}
\usepackage{mathtools}
\usepackage{tikz}
\usepackage{pdfpages}
\usepackage{graphicx}
\usepackage{marginnote}
\usepackage{amssymb}
\usepackage{mathabx}
\usepackage[margin=3.5cm]{geometry}

\usepackage[draft]{fixme}
\usepackage{comment}

\usepackage{enumerate}

\setcounter{tocdepth}{5} 
\setlength{\parskip}{0.3\baselineskip}

\theoremstyle{plain}  
\newtheorem{theorem}{Theorem}[section]
\newtheorem*{theorem*}{Theorem}
\newtheorem{corollary}[theorem]{Corollary}
\newtheorem{lemma}[theorem]{Lemma}
\newtheorem{proposition}[theorem]{Proposition}

\newtheorem{definition}[theorem]{Definition}

\theoremstyle{remark}

\newtheorem*{notation*}{Notation}
\newtheorem{remark}[theorem]{Remark}

\newtheorem*{question*}{Question}

\newtheorem*{claim*}{Claim}



\newcommand{\norm}[1]{\lVert#1\rVert}

\renewcommand{\leq}{\leqslant}

\renewcommand{\geq}{\geqslant}

\newcommand{\R}{\mathbb{R}}

\newcommand{\Z}{\mathbb{Z}}
\newcommand{\C}{\mathbb{C}}

\newcommand{\HH}{\mathbb{H}}
\newcommand{\PP}{\mathbb{P}}
\newcommand{\bnabla}{\boldsymbol{\nabla}}

\newcommand{\cC}{\mathcal{C}}

\newcommand{\GL}{\mathrm{GL}}
\newcommand{\SL}{\mathrm{SL}}

\newcommand{\Sp}{\mathrm{Sp}}

\newcommand{\ofg}{\pi_1^{orb}} 
\newcommand{\pkp}{\raise1pt\hbox{\ensuremath{\scriptscriptstyle(}}k\raise1pt\hbox{\ensuremath{\scriptscriptstyle)}}}
\newcommand{\pip}{\raise1pt\hbox{\ensuremath{\scriptscriptstyle(}}i\raise1pt\hbox{\ensuremath{\scriptscriptstyle)}}}

\let\lowchi\chi
\renewcommand{\chi}{\mathchoice{\raise1.5pt\hbox{\ensuremath{\lowchi}}}{\raise1.5pt\hbox{\ensuremath{\lowchi}}}{\lowchi}{\lowchi}}

\newcommand{\scomment}[2]{#2}

\DeclareMathOperator{\ad}{ad}

\DeclareMathOperator{\End}{End}

\DeclareMathOperator{\Id}{Id}

\DeclareMathOperator{\Stab}{Stab}

\newcommand{\Aut}{\operatorname{Aut}}

\newcommand{\Pic}{\operatorname{Pic}}

\hyphenation{Higgs}



\newcommand{\kbar}{\mathchar'26\mkern-9mu k}

\newcommand{\QQ}{\mathbb{Q}}

\newcommand{\ZZ}{\mathbb{Z}}


\begin{document}
\title{Hitchin Connection on the Veech Curve}




\author{Shehryar Sikander}

\email{ssikande@ictp.it}

\thanks{This is author's thesis supported in part by the center of excellence grant 'Center for Quantum Geometry of Moduli Spaces' from the Danish National Research Foundation (DNRF95).}

\maketitle

\begin{abstract}
We give an expression for the pull back of the Hitchin connection from the moduli space of genus two curves to a ten-fold covering of a Teichm\"uller curve discovered by Veech. We then give an expression, in terms of iterated integrals, for the monodromy representation of this connection. As a corollary we obtain quantum representations of infinitely many pseudo-Anosov elements in the genus two mapping class group.  
\end{abstract}

\tableofcontents
\clearpage


\clearpage

\section{Introduction}
Let $S_g$ be a closed connected and oriented surface of genus $g \geq 2$, and consider its \emph{mapping class group} $\Gamma_g$ of orientation-preserving diffeomorphisms up to isotopy. 
More precisely, 
\begin{equation}
\Gamma_g := \text{Diffeo}^+(S_g)/\text{Diffeo}^+_0(S_g),
\end{equation}
where $\text{Diffeo}^+(S_g)$ is the group of orientation-preserving diffeomorphisms of $S_g$, and $\text{Diffeo}_0^+(S_g)$ denotes the connected component of the identity.
The mapping class group is of fundamental importance in low-dimensional topology, and has been studied for more than a century. Excellent surveys on recent progress and open problems related to the group $\Gamma_g$ can be found in \cite{hicm,fpmcg}, and a comprehensive treatment is given in  \cite{FM}.
Recently, and perhaps surprisingly, ideas from quantum field theory have been applied very successfully to answer questions about the group $\Gamma_g$, which 
resisted all traditional methods.

In the seminal paper \cite{WTQFT}, Witten laid the foundation of a three-dimensional topological quantum field theory, now called the \emph{Witten-Reshetikhin-Turaev} TQFT. Implicit in this TQFT is a finite-dimensional projective linear representation of $\Gamma_g$, for any positive integer $k$, which is called the \emph{quantum representation} at level $k$. These representations provide a powerful tool for studying the mapping class groups. Indeed, the collection of representations can distinguish any two mapping classes, except the identity and the hyper-elliptic involution in genus 2. This is the content of Turaev's asymptotic faithfulness conjecture, which was proved by Andersen using the theory of Toeplitz operators \cite{A,FWW}.
By applying similar technology, the quantum representations were also used to prove that $\Gamma_g$ does not have Kazhdan's property (T) \cite{AT}. Finally, the quantum representations have been used by Masbaum and Reid to show that any finite group occurs as a quotient of a finite-index subgroup of $\Gamma_g$ \cite{MR}.  

There are by now (at least) three different rigorous constructions of the quantum representations of $\Gamma_g$. The first uses representation theory of the quantum group $U_q(\mathfrak{sl}(2))$, with $q$ a fixed root of unity \cite{RT1,RT2,T}, the second gives a combinatorial construction using skein theory \cite{BHMV}, and the third provides a geometric description involving the parallel transport of a projectively flat connection in a vector bundle over the moduli space of smooth compact Riemann surfaces of genus $g$. 
The fact that these three constructions are equivalent follows from \cite{AU4} and \cite{las}. 

While quantum representations have been utilized successfully to answer questions about $\Gamma_g$, they are themselves far from being well understood. 
Amongst all mapping classes, the quantum representations of pseudo-Anosov mapping classes
are the most mysterious. A number of problems in this direction are discussed in \cite{AMU} and chapter seven of \cite{oht}.


Let $\mathcal{T}_g$ be the \emph{Teichm\"uller space} of marked Riemann surfaces of genus $g$. It is well-known that $\mathcal{T}_g$ has the structure of a complex manifold which is homeomorphic to an open ball in $\C^{3g-3}$. Moreover, it follows from Royden's work that $\mathcal{T}_g$ is \emph{Kobayashi hyperbolic}.
The group $\Gamma_g$ has a natural and properly discontinuous action on $\mathcal{T}_g$. In fact, for $g \geq 3$, Royden showed that $\Gamma_g$ can be identified with the group of biholomorphic automorphims of $\mathcal{T}_g$, but in genus two, the hyperelliptic involution $\sigma$ acts trivially, and the automorphisms of $\mathcal{T}_2$ amount to $\Gamma_2/\langle \sigma \rangle$. The quotient $\mathcal{M}_g := \mathcal{T}_g/ \Gamma_g$ is a complex orbifold called Riemann's moduli space, and the orbifold singularities correspond to equivalence classes of Riemann surfaces with non-trivial automorphism group. Since $\mathcal{T}_g$ is simply connected, the mapping class group $\Gamma_g$ can be identified with the orbifold fundamental group $\ofg(\mathcal{M}_g, *)$. Morevoer, $\mathcal{M}_g$ is also Kobayashi hyperbolic since the Kobayashi metric is invariant under biholomorphic automorphisms. 

Let $X$ be a compact Riemann surface of genus $g \geq 2$ and let $\mathcal{M}_X^B$ be the moduli space of semi-stable vector bundles of rank 2 with trivial determinant on $X$.
Drezet and Narasimhan showed that $\Pic(\mathcal{M}^B_X) \cong \ZZ$. Let $\mathcal{L}_X \to \mathcal{M}_X^B$ denote the ample generator of the group $\Pic(\mathcal{M}^B_X)$, and let $H^{\pkp}_X:= H^0(\mathcal{M}^B_X, \mathcal{L}^k_X)$ for any positive integer $k$. The dimension of this vector space is given by the Verlinde formula \eqref{vnumber}. 


From \cite{ADPW}, \cite{Hi}, and \cite{Fal}, it follows that the vector spaces $H^{\pkp}_X$ can be glued together to form a vector bundle  $\mathcal{H}^{\pkp} \to \mathcal{T}_g$ with a projectively flat connection in the associated projective bundle. We denote this projectively flat vector bundle by
\begin{equation}
(\PP \mathcal{H}^{\pkp}, {\bnabla}^{\pkp}) \to \mathcal{T}_g.
\end{equation} 
The action of $\Gamma_g$ on $\mathcal{T}_g$ lifts to 
an action on the projective bundle $\PP \mathcal{H}^{\pkp} \to \mathcal{T}_g$ which preserves the connection $\bnabla^{\pkp}$.
Consequently, upon passing to the quotient by this action we obtain the projectively flat vector bundle
\begin{equation}
\label{pfvbmg}
(\PP \mathcal{\check H}^{\pkp}, \check \bnabla^{\pkp}) \to \mathcal{M}_g.
\end{equation}
Once a base point in $\mathcal{M}_g$ is chosen, the monodromy of \eqref{pfvbmg} gives rise to the quantum representation 
\begin{equation}
\rho^{\pkp}_g \colon \Gamma_g \cong \ofg(\mathcal{M}_g, *) \to \Aut(\PP \mathcal{\check H}^{\pkp}_{\scriptscriptstyle *}).
\end{equation}

A Teichm\"uller curve is a pair $(V, \phi)$ where $V$ is a finite area hyperbolic surface 
and 
\begin{displaymath}
\phi \colon V \to \mathcal{M}_g
\end{displaymath}
is a generically injective holomorphic map which is an isometry for the Kobayashi metrics on the domain and the codomain. Equivalently, a Teichm\"uller curve is an algebraic curve in $\mathcal{M}_g$ which is a complex geodesic for the Kobayashi (=Teichm\"uller) metric. A typical construction of Teichm\"uller curves comes from \emph{Veech surfaces}. A Veech surface generates a complex geodesic $\HH \to \mathcal{T}_g$ for the Kobayashi metric, such that the image of $\HH$ under the composition with the qoutient map $\HH \to \mathcal{T}_g \to \mathcal{M}_g$ is an algebraic curve: This curve is naturally a complex geodesic for the Kobayashi metric and is thus a Teichm\"uller curve. Suppose a Teichm\"uller curve $(V, \phi)$ is such that $\phi(V)$ comes from a Veech surface, then $\pi_1^{orb}(V, *)$ can be identified as the subgroup of $\ofg(\mathcal{M}_g, \phi(*)) \cong \Gamma_g$ which preserves (setwise) the image of the Kobayashi geodesic $\HH \to \mathcal{T}_g$ under its action on $\mathcal{T}_g$.  See section \ref{TC} for precise definitions and more details.


Using the holomorphic map $\phi \colon V \to \mathcal{M}_g$, one can pull back the projectively flat vector bundle \eqref{pfvbmg} which we denote by 
\begin{equation}
\label{pbv}
(\PP \mathcal{H}^{\pkp}_{\scriptscriptstyle V}, \bnabla^{\pkp}_{\scriptscriptstyle V}) \to V. 
\end{equation} 
Since this is a projectively flat vector bundle on a Riemann surface, computing its monodromy is considerably easier 
than computing the monodromy of \eqref{pfvbmg}, and one can hope to obtain explicit matrices for the monodromy representation of $\pi_1^{orb}(V, *)$ with respect to $\bnabla^{\pkp}_{\scriptscriptstyle V}$. Assuming $\phi(V)$ comes from a Veech surface, these explicit matrices will then correspond to the quantum representation of a subgroup of $\Gamma_g$. Moreover, since $\phi(V)$ is a totally geodesic submanifold with respect to the Teichm\"uller metric, it follows from Bers' characterization of pseudo-Anosov mapping classes that each geodesic \scomment{(w.r.t to the Poincar\'e metric)} on $V$ corresponds to a pseudo-Anosov mapping class. Taking appropriate products of quantum representations of the generators of $\ofg(V, *)$ will then give explicit matrices for the quantum representations of pseudo-Anosov elements. We refer to section \ref{TC} for further details.

In the case where $X$ has genus two, van Geemen and de Jong in \cite{GJ} gave a rather explicit expression for the projectively flat connection \eqref{pfvbmg}. This construction plays a central role in the present paper, and its details are recalled in section \ref{HC}.

A special feature of the genus two case is the Narasimhan-Ramanan isomorphism, see \cite{NR}, which implies that $S^k(H^{(1)}_X) = H^{(k)}_X$ where $S^k$ denotes the $k^{\mathrm{th}}$ symmetric power of a vector space. Beauville showed an isomorphism, for all $g \geq 2$, between $\PP H^{(1)}_X$ and  $\PP F_g$, where $F_g$ is the vector space of complex valued functions on the group $\ZZ_2^g$, see \cite{Be}. Moreover,  this isomorphism \emph{only} depends on the choice of a level two theta structure on $X$. 

These two isomorphisms together imply that a choice of a level two theta structure on $X$, where $X$ has genus two, gives a canonical isomorphism  $\PP H^{(k)}_X \cong \PP S^k(F_2)$. There is a finite degree covering $\widetilde{\cC}_{\infty} \to \mathcal{M}_2$    
Let us denote the group morphism given by the monodromy of the projectively flat connection by 
\begin{equation}
\label{gjmonodromy}
\rho^{\pkp}_2 \colon \ofg(\mathcal{M}_2, *) \to \Aut(\PP S^k(F_2)).
\end{equation}

The central object of this paper is a special Teichm\"uller curve 
\begin{equation}
\label{algebraic}
\phi \colon \chi \to \mathcal{M}_2
\end{equation}
with orbifold fundamental group given by the Hecke traingle group of signature $(2,5, \infty)$,
\begin{equation}
\label{trainglegroup}
    \ofg(\chi)= \triangle(2, 5, \infty) = \langle \, S, T \mid  S^2 = (ST)^5 = 1 \, \rangle.
\end{equation}
It is the first in family of Teichm\"uller curves discovered by Veech in \cite{Veech}. 
The papers \cite{Mc}, \cite{BM}, and \cite{Lo} give an algebraic description of $\phi$, which we recall. 
Let $\widetilde{\chi}:= \C\PP^1 - \mu_5$, where $\mu_5$ is the set of fifth roots of unity. Let $D\subset \Aut(\widetilde{\chi})$ be the subgroup generated by the maps $t\mapsto {1\over t}$ and $t \mapsto \zeta^2 t$, where $t$ is the restriction of the global rational coordinate on $\C\PP^1$ to $\widetilde{\chi}$, and $\smash{\zeta:=\exp(\frac{2 \pi}{5} \sqrt{-1})}$. For any $t \in \widetilde{\chi}$, let $F_t$ be the unique smooth Riemann surface of genus two associated with the polynomial 
\begin{displaymath}
y^2=\prod_{i=1}^5(x - \zeta^i - \zeta^{-i}t) = x^5 - 5tx^3 + 5t^2x - t^5 - 1.
\end{displaymath}
It turns out that $F_t$ is isomorphic to $F_{gt}$ for all $g \in D$, and that $\chi \cong \widetilde{\chi}/D$. Under this identification, the map $\phi \colon \chi \to \mathcal{M}_2$ is simply given by $\phi([t])=[F_t]$, for $[t] \in  \widetilde{\chi}/D$.

In section \ref{TC}, we compute the pullback of the projectively flat connection \eqref{pfvbmg} on $\mathcal{M}_2$ to $\chi$ and then to $\widetilde{\chi}$. This pullback connection is defined in the trivial vector bundle  $\PP S^k(F_2) \times \widetilde{\chi}$ and is explicitly given as       
\begin{equation}
\bnabla^{\pkp}_{\scriptscriptstyle \widetilde{\chi}} = d + \omega^{\pkp}_{\scriptscriptstyle \widetilde{\chi}} \qquad \text{where} \qquad  \omega^{\pkp}_{\,\scriptscriptstyle \widetilde \chi} = {\kbar} \sum_{1 \leq i \leq 5} {A^{\pkp}_i \over t - \zeta^{i}} \,dt \quad \in \Omega^1(\widetilde{\chi},\End S^k(F_2)).
\end{equation} 
Here $\kbar := -(16k+32)^{-1}$ and $A_i^{\pkp} \in \End (S^k(F_2))$  are explicit second-order differential operators defined in Proposition \ref{pullback}.  The connection $\bnabla^{\pkp}_{\scriptscriptstyle \widetilde{\chi}}$ extends as a meromorphic connection on $\C\PP^1$ with logarithmic singularities at each $\zeta^i \in \mu_5$ and corresponding residue given by $A_i^{\pkp}$. 


Let $\epsilon > 0$ be a small real number and consider the map
 $\Phi^{\pkp}_{\epsilon} \colon [0, 1 - \epsilon] \to  \Aut(S^k(F_2))$ which  satisfies the differential equation 
\begin{equation}
\label{9102}
{d\,\Phi^{\pkp}_{\epsilon}(x)\over dx} = {\kbar} \sum_{1 \leq i \leq 5} {A^{\pkp}_i \over x - \zeta^{i}} \Phi^{\pkp}_{\epsilon}(x) \qquad \qquad \Phi^{\pkp}_{\epsilon}(0) = \Id^{\pkp}.
\end{equation}
The limit of this map as $\epsilon \to 0$ will play a central role in proving Theorem \ref{maintheorem}.
Differential equations such as \eqref{9102} were first considered by Poincar\'e who gave explicit solutions of such equations 
\cite{Poi}. Lappo-Danilevsky, among many other results, computed the monodromy of these solutions, see m\'emoire second in \cite{LD}. The solutions are given in terms of \emph{hyperlogarithms} which, for the specific case of \eqref{9102}, are defined upon the choice of a tuple $(\zeta^{i_1}, \dots, \zeta^{i_r}) \in (\mu_5)^r$ by the iterated integral
\begin{equation*}
L_0(\zeta^{i_1}, \dots, \zeta^{i_r} \vert 1-\epsilon)
:= \int_0^{1-\epsilon}\mkern-6mu {1\over s_{r} - \zeta^{i_r}} \int_0^{s_{r}}\mkern-6mu {1\over s_{{r-1}} - \zeta^{{i}_{r-1}}} \cdots \int_0^{s_{2}}\mkern-6mu {1\over s_{1} - \zeta^{{i}_1}} ds_{1}\cdots ds_{{r-1}} ds_{{r}}.
\end{equation*}
The solution to \eqref{9102} is then given by 
\begin{equation*}
\Phi^{\pkp}_{\epsilon}(x) := \Id^{\pkp}  +\sum_{r=1}^{\infty}{\kbar}^r \sum_{\mathclap{\,\, \zeta^{i_1}, \dots, \zeta^{i_r}}}  L_0(\zeta^{i_1}, \dots, \zeta^{i_r} \vert x)A^{\pkp}_{i_1}\dots A^{\pkp}_{i_r} \quad \quad  \text{for}\,\, x \in [0, 1 - \epsilon], \, \zeta^{i_j} \in \mu_5.
\end{equation*}

We can now state the main theorem.   

\begin{theorem}
\label{maintheorem}
For any positive integer $k$, let $\rho_2^{\pkp} \colon \ofg(\mathcal{M}_2, [X]) \to \Aut (\PP S^k(F_2))$ be the quantum representation based at the orbifold point determined by the compact Riemann surface $X$ associated to the polynomial $y^2 = x^5 -1$. Then, under the isomorphism $\ofg(\chi) \cong \triangle(2, 5, \infty)$, we have that 
\begin{equation*}
     \rho_2^{\pkp}(ST)=  {M_{0}^{\pkp}} \qquad \text{and } \qquad
     \rho_2^{\pkp}(T)= (\Phi^{\pkp})^{-1} \cdot M_1^{\pkp}\cdot \exp\big(\kbar\pi i A^{\pkp}_1\big) \cdot \Phi^{\pkp},
\end{equation*}
where, 
\begin{equation}
M_{0}=\begin{bmatrix}
0 & 0 & -1 & \hphantom{-}1\\
0 & 0 & \hphantom{-}0 & -1\\
1 & 0 & -1 & \phantom{-}0\\
1 & 1 & -1 & \hphantom{-}0
\end{bmatrix}  
\qquad \text{and} \qquad
M_{1}=\begin{bmatrix}
0 & 1& \hphantom{-}0 & \hphantom{-}0\\
1&0 & \phantom{-}0 & \hphantom{-}0\\
1 & 1 & \hphantom{-}0 & -1\\
1& 1 & -1 & \hphantom{-}0
\end{bmatrix}
\end{equation}
and 
\begin{equation}
\Phi^{\pkp}= \Id + \sum_{r=1}^{\infty}\,{\kbar}^r \;\sum_{\mathclap{\overset{\,\,\zeta^{i_1}, \dots, \zeta^{i_r}}
{\!\!\!\scriptscriptstyle \zeta^{i_r}\neq 1}}}\; L_0(\zeta^{i_1}, \dots, \zeta^{i_r}| 1) \,\,A^{\pkp}_{i_1}\cdots A^{\pkp}_{i_r} \quad \quad \in \End(S^{k}(F_2))
\end{equation}
where $1 \leq i_j \leq 5$, and the Hyperlogarithm $L_0(\zeta^{i_1}, \dots, \zeta^{i_r} \vert 1)$ is given by 
\begin{equation*}
L_0(\zeta^{i_1}, \dots, \zeta^{i_r} \vert 1)
:= \int_0^{1}\mkern-6mu {1\over s_{r} - \zeta^{i_r}} \int_0^{s_{r}}\mkern-6mu {1\over s_{{r-1}} - \zeta^{{i}_{r-1}}} \cdots \int_0^{s_{2}}\mkern-6mu {1\over s_{1} - \zeta^{{i}_1}} ds_{1}\cdots ds_{{r-1}} ds_{{r}}.
\end{equation*}
$M_i^{(k)}$ denotes the k-th symmetric power of $M_i$. 
\end{theorem}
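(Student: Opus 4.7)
My plan is to exploit the Galois cover $\widetilde{\chi}\to\chi$ of degree $10$ with deck group $D = \langle t\mapsto \zeta^{2}t,\, t\mapsto 1/t\rangle$, together with the explicit Fuchsian system $\bnabla^{\pkp}_{\scriptscriptstyle\widetilde{\chi}}$ from Proposition \ref{pullback}. Because $\widetilde{\chi}$ is the universal cover of $\chi$ away from the ramification points, the orbifold fundamental group $\ofg(\chi,[t=0])$ fits into a short exact sequence $1\to\pi_1(\widetilde{\chi},0)\to\ofg(\chi,[t=0])\to D\to 1$, and every element of $\ofg(\chi)$ is realized by a path on $\widetilde{\chi}$ followed by a deck transformation. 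Accordingly, $\rho^{\pkp}_2$ decomposes as a product of (i) parallel transport along an explicit path in $\widetilde{\chi}$, computed via the iterated-integral solution of \eqref{9102}, and (ii) the fibrewise action of the corresponding deck element on $S^k(F_2)$.

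The first step is to pin down the geometric meaning of the generators of $\triangle(2,5,\infty)$. The orbifold point of order $5$ in $\chi$ is the image of $t=0$ (whose $D$-stabilizer is $\langle t\mapsto\zeta^{2}t\rangle$), the orbifold point of order $2$ is the image of $t=-1$ (stabilized by $\langle t\mapsto 1/t\rangle$), and the unique cusp is the image of the $D$-orbit $\mu_{5}$. Writing the third generator as $c = (ST)^{-1}S^{-1} = T^{-1}$, one sees that $ST$ is a small loop around the order-$5$ orbifold point $[t=0]$ (the chosen basepoint), while $T$ is a parabolic loop around the cusp. In particular $ST$ involves no parallel transport: its image under $\rho^{\pkp}_2$ is the fibrewise action of the deck element $t\mapsto\zeta^{2}t$, which under the Beauville/Narasimhan--Ramanan identification $\PP H^{(1)}_X \cong \PP F_{2}$ is given by the matrix $M_0$. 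Taking the $k$-th symmetric power yields $\rho^{\pkp}_2(ST) = M_0^{\pkp}$.

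To compute $\rho^{\pkp}_2(T)$ I would choose a concrete lift of the parabolic loop around the cusp to a path on $\widetilde{\chi}$: travel along the real segment from $t=0$ to $t=1-\epsilon$, half-encircle $t=1$ in the upper half-plane to the point $(1-\epsilon)^{-1}\approx 1+\epsilon$, then return to $t=0$ along the image of the first segment under $t\mapsto 1/t$. Parallel transport along the first segment is $\Phi^{\pkp}_{\epsilon}(1-\epsilon)$, expressed by the Poincar\'e--Lappo-Danilevsky hyperlogarithmic series. The half-circle around the logarithmic singularity $t=1$, whose residue is $\kbar A^{\pkp}_1$, contributes the factor $\exp(\kbar\pi i A^{\pkp}_1)$. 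The $D$-equivariance of $\bnabla^{\pkp}_{\scriptscriptstyle\widetilde{\chi}}$ allows the return transport to be identified with the inverse of the forward transport conjugated by the deck element $t\mapsto 1/t$, whose fibrewise action at $t=0$ is $M_1^{\pkp}$. Assembling these factors and letting $\epsilon\to 0$ produces the formula stated for $\rho^{\pkp}_2(T)$.

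The principal technical difficulty is precisely the $\epsilon\to 0$ limit. Both the naive transport factor $\Phi^{\pkp}_{\epsilon}(1-\epsilon)$ and the half-loop factor develop logarithmic divergences in $\epsilon$, and only their combination is finite. Isolating and cancelling the divergent pieces is exactly what forces the restriction $\zeta^{i_r}\neq 1$ on the innermost integrand in the series defining the limit object $\Phi^{\pkp}$, and the term-by-term justification of this cancellation — using the shuffle product of iterated integrals together with the asymptotic factorisation of Chen series at a regular singular point — is the most delicate part of the argument. Once this is in place, the rest of the proof reduces to straightforward linear algebra and the Heisenberg-equivariance of the connection coming from the level-two theta structure.
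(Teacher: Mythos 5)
Your proposal is correct and follows essentially the same route as the paper: identify $ST$ with the elliptic loop at the order-five point $[t=0]$ (so its monodromy is purely the equivariant/deck action $M_0^{\pkp}$, with no parallel transport), lift $T$ to the path $0\to 1-\epsilon\to(1-\epsilon)^{-1}\to 0$ on $\widetilde{\chi}$, compute the three contributions as the hyperlogarithmic Chen series $\Phi^{\pkp}_{\epsilon}$, the local exponential $\exp(\kbar\pi i A_1^{\pkp})$ at the regular singular point $t=1$, and the conjugated return transport carrying $M_1^{\pkp}$, and finally regularize as $\epsilon\to 0$ to force $\zeta^{i_r}\neq 1$. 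The only substantive point you elide is the provenance of $M_0$ and $M_1$ themselves, which the paper derives from the homological (Gauss--Manin) monodromy via the embedding of $\chi$ into the Hilbert modular surface $X_{\sqrt5}$ and the map to $A(H_2)$, rather than reading them off directly from the theta-structure identification.
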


\subsection{Acknowledgements}
I wish to thank my advisor J. E. Andersen for introducing me to the subject of Geometric Quantization and Topological Quantum Field Theories.  I wish to thank Sasha Beilinson for sharing his preprint \cite{BK} and his fascinating insight on Geometric Quantization of varieties over finite fields. I wish to thank Benson Farb for his interest and fruitful discussions on volumes of pseudo-Anosov mapping tori.  I wish to thank Don Zagier for encouragement and interest during the early part of this work.

\section{Moduli spaces of vector bundles and Hitchin connection in genus two}
\label{sec:Hitchin}

Let $X$ be a compact Riemann surface of genus $g \geq 2$. Let $E \to X$ be a holomorphic vector bundle and let 
\begin{align*}
\mu(E):={\text{degree}(E) \over \text{rank}(E)}.
\end{align*}
The rational number $\mu(E)$ is called the slope of the vector bundle $E \to X$.
\begin{definition}
A holomorphic vector bundle $E \to X$ is called stable, respectively semi-stable, if $\mu(E) > \mu(F)$, respectively  $\mu(E) \geq \mu(F)$, for all proper holomorphic sub-bundles $F \subset E$. 
\end{definition} 
Consider the moduli space of semi-stable bundles,
\begin{equation}
    \label{modulidef}
    \mathcal{M}^B_X := \{E \to X \mid \wedge^2 E \cong \mathcal{O}_X\,\text{and}\,E\,
    \text{ is semi-stable}  \} / \!\sim_s, 
\end{equation}
where $\sim_s$ denotes the $S$-equivalence for semi-stable vector bundles. For an introduction to $S$-equivalence, which is weaker than holomorphic equivalence, see section 2 of \cite{NR}. Seshadri proved that \eqref{modulidef} is an irreducible normal projective variety of dimension $3g-3$ \cite{ses}. For $g \geq 3$, the variety $\mathcal{M}^B_X$ has rational singularities, see \cite{BaMe} {Remark 2.7}, and the singular locus, which has codimension greater than one, corresponds to ($S$-equivalence classes of) semi-stable vector bundles which are not stable, see theorem 1 of \cite{NR}.

 The variety $\mathcal{M}^B_X$ is locally factorial\footnote{i.e. all local rings are unique factorization domains, see page 141 of \cite{har} for more details.}, and a normal projective variety is locally factorial if and only if there is an isomorphism between its equivalence classes of line bundles and its equivalence classes of (Weil) divisors\footnote{It is important to distinguish between Weil and Cartier divisors since we are in the setting of a singular variety. We use the definition on page 130 of \cite{har}, which makes sense in the setting of normal varieties.}. In particular, associated with any closed codimension one subvariety of $\mathcal{M}^B_X$  is a unique line bundle on $\mathcal{M}^B_X$.

 For any $L \in \Pic^{g-1}(X)$ the set of points  
\begin{equation}
    \label{xi} 
    \Xi_L := \{ [E] \in \mathcal{M}^B_X\, | \,h^0(X, E' \otimes L) \neq 0\, 
    \text{for some} \,E' \in [E] \}
\end{equation} 
is a divisor in $\mathcal{M}^B_X$, see page 90 of \cite{DN}. Denote the line bundle associated with \eqref{xi} by $\mathcal{O}(\Xi_L)$\footnote{The variety $\mathcal{M}^B_X$ is locally factorial, see th\'eor\`eme A in \cite{DN}, thus to every (Weil) divisor is associated a line bundle, see section 6 of \cite{DN}}. The following is   th\'eor\`eme B from \cite{DN}.
\begin{theorem}
The line bundle $\mathcal{O}(\Xi_L)$ has the following properties.
\begin{enumerate}[a)]
\item The isomorphism class of $\mathcal{O}(\Xi_L)$ is independent of the choice of $L \in \Pic^{g-1}(X)$.
\item The group $\Pic(\mathcal{M}^B_X)$ is isomorphic to $\ZZ$, and is generated by $\mathcal{O}(\Xi_L)$.
\end{enumerate}
\end{theorem}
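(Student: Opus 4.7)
The plan is to follow the approach of Drezet and Narasimhan \cite{DN}. For part (a), the key idea is to view the divisors $\Xi_L$ as forming an algebraic family over $\Pic^{g-1}(X)$ and then combine the seesaw principle with rigidity of $\Pic(\mathcal{M}_X^B)$. Working over the stable locus, where a local universal bundle $\mathcal{E}$ exists, one constructs a line bundle on $\mathcal{M}_X^B\times\Pic^{g-1}(X)$ whose restriction to $\mathcal{M}_X^B\times\{L\}$ is $\mathcal{O}(\Xi_L)$. The seesaw theorem then shows that all fibres are isomorphic up to pullback of line bundles from $\Pic^{g-1}(X)$; a normalisation using a fixed base point, together with the connectedness of $\Pic^{g-1}(X)\cong\Jac(X)$, eliminates this ambiguity and yields the claimed independence.

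For part (b), the argument proceeds in three steps. First, one reduces to the smooth stable locus $\mathcal{M}_X^{Bs}$: since $\mathcal{M}_X^B$ is normal and the strictly semi-stable locus has codimension at least two, restriction induces an isomorphism $\Pic(\mathcal{M}_X^B)\cong\Pic(\mathcal{M}_X^{Bs})$ (in genus two one may alternatively invoke the Narasimhan--Ramanan identification $\mathcal{M}_X^B\cong\PP^3$ and conclude directly). Second, one realises $\mathcal{M}_X^{Bs}$ as a GIT quotient $R^s/\PGL(V)$, where $R^s$ is an open subscheme of a suitable Quot scheme carrying a free $\PGL(V)$-action; here $\Pic(R^s)\cong\ZZ$ is generated by the determinant of cohomology $\det R\pi_*(\mathcal{E})$ attached to the universal quotient bundle on $R^s\times X$. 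Third, one applies Kempf's descent criterion: a $\PGL(V)$-equivariant line bundle on $R^s$ descends to $\mathcal{M}_X^{Bs}$ precisely when the centre $\mu_2\subset\SL(V)$ acts trivially on its fibres. A direct weight computation then singles out a primitive descending class, whose image in $\Pic(\mathcal{M}_X^{Bs})$ will be identified with $\mathcal{O}(\Xi_L)$.

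The main obstacle is precisely this last identification, which requires a careful cohomology-jumping analysis. For a family $\mathcal{E}$ of stable bundles on $X$, the locus where $h^0(X,\mathcal{E}\otimes\pi_X^*L)$ jumps is cut out scheme-theoretically by a section of the determinant-of-cohomology line bundle $\det R\pi_*(\mathcal{E}\otimes\pi_X^*L)$; this is the Knudsen--Mumford perspective on jumping loci. Tracking the central weight of the $\SL(V)$-action through this construction shows that the primitive descending class maps to $\mathcal{O}(\Xi_L)$ itself, and not to a positive power of it. This realises $\mathcal{O}(\Xi_L)$ as the ample generator of $\Pic(\mathcal{M}_X^B)\cong\ZZ$ and completes the proof of both assertions simultaneously, with part (a) emerging as a by-product since the primitive generator of $\Pic(\mathcal{M}_X^B)$ is canonically defined and does not depend on $L$.
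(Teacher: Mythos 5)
The paper offers no proof of this statement: it is quoted verbatim as Th\'eor\`eme B of Drezet--Narasimhan \cite{DN}, so there is no internal argument to compare against. Your outline is a faithful summary of the strategy actually used in \cite{DN} --- presentation of the stable locus as a quotient of an open subscheme of a Quot scheme, computation of the equivariant Picard group via the determinant of cohomology, Kempf's descent criterion applied to the central $\mu_2$-weight, and the Knudsen--Mumford realisation of $\Xi_L$ as the zero locus of a section of $\det R\pi_*(\mathcal{E}\otimes\pi_X^*L)$ --- so as a reconstruction of the cited proof it is sound. One caveat: your reduction to the stable locus via ``the strictly semi-stable locus has codimension at least two'' fails precisely in the genus-two case relevant to this paper, where $\mathcal{M}^B_X\cong\PP^3$ and the strictly semi-stable locus is the Kummer quartic surface, a divisor; there the restriction map $\Pic(\mathcal{M}^B_X)\to\Pic(\mathcal{M}^{Bs}_X)$ has nontrivial kernel-free image only modulo the class of that quartic. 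You do flag the Narasimhan--Ramanan identification \cite{NR} as an alternative in that case, which is the correct fix, but it should be the primary route for $g=2$ rather than a parenthetical.
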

From now on denote the ample generator of $\Pic(\mathcal{M}^B_X)$ by 
\begin{equation}
\mathcal{L}_X \to \mathcal{M}^B_X.
\end{equation}
Since $\mathcal{M}^B_X$ is projective the vector space $H^0(\mathcal{M}^B_X, \mathcal{L}^k_X)$ is finite-dimensional. In fact, there is the following remarkable formula, first conjectured by string theorist E. Verlinde,
 \begin{equation}
 \label{vnumber}
\dim_{\C} H^0(\mathcal{M}^B_X, \mathcal{L}^k_X) =\bigg({k+2 \over 2}\bigg)^{g-1} \sum_{j=1}^{k+1} \big(\sin^2 {j\pi \over k+2} \big)^{1-g}.
 \end{equation} 
We refer to \cite{Zag} for an excellent introduction to number-theoretical aspects of this formula, a brief historical overview, and ten different formulas for the right-hand side of \eqref{vnumber}. A geometric proof of \eqref{vnumber} is given in \cite{fal2} . Further references can be found in \cite{Zag}.  
 
Let $X^{(g-1)}$ be the $(g-1)$-fold symmetric product of $X$. The set 
\begin{equation}
\label{tdiv}
\Theta_X:=\{L\in \text{Pic}^{g-1}(X) | h^0(X, L)\neq 0\}
\end{equation}
 is the image of the canonical map 
 \begin{equation}
 \label{C}
 C \colon X^{(g-1)} \to \Pic^{g-1}(X)
 \end{equation}
 sending an element $D \in X^{(g-1)}$ (considered as an effective divisor of degree $g-1$ on $X$) to its (equivalence class of) associated line bundle $\mathcal{O}(D) \in \Pic^{g-1}(X)$. The map \eqref{C} is proper and birational onto its image, see section 11.2 of \cite{BL}, which implies that \eqref{tdiv} is a closed irreducible subvariety of dimension $g-1$. In particular, \eqref{tdiv} is a divisor, called the canonical theta divisor associated with $X$.     Let $\mathcal{O}(2\Theta_X)$ be the line bundle associated with the divisor $2\Theta_X$, and let  
$$|2\Theta_X|:= \PP H^0(\Pic^{1}(X), \mathcal{O}(2\Theta_X))$$ be the projective space of divisors linearly equivalent to $2\Theta_X$. For a positive integer $n$, it is a classical result that dimension of $H^0(\Pic^{g-1}(X), \mathcal{O}(n\Theta_X))$ is $n^{g}$, thus $|n\Theta_X| \cong \C\mathbb{P}^{n^{g}-1}$. 

Let $X$ be of genus $g=2$. In this case the theta divisor \eqref{tdiv} is the image of $X$ under \eqref{C}.
M. S. Narasimhan and S. Ramanan, see theorem 2 of \cite{NR}, proved a canonical isomorphism 
\begin{equation}
\label{nriso}
\mathcal{M}^B_X \cong |2\Theta_X|\,\, (\cong \C\PP^3).  
\end{equation} 
The Narasimhan-Ramanan isomorphism in particular implies that the ample generator  of $\Pic(\mathcal{M}^B_X)$ can be identified with the ample generator of $\Pic(\C\PP^3)$. This identification gives specialization of the Verlinde formula \eqref{vnumber} to
 \begin{equation}
 \dim_{\C}H^0(\mathcal{M}_X, \mathcal{L}^k_X) = {k + 3 \choose 3}
 \end{equation}
 and implies that 
 \begin{equation*}
 H^0(\mathcal{M}_X, \mathcal{L}^k_X) = S^k (H^0(\mathcal{M}_X, \mathcal{L}_X))
 \end{equation*}
 where $S^k$ denotes the $k^{th}$ symmetric power of a vector space.

\subsection{The Heisenberg group}
\label{heisenberg}
Let $X$ be a compact Riemann surface of genus $g \geq 2$. In this section, we study the group of automorphisms of the line bundle $\mathcal{L}_X \to \mathcal{M}_X^B$ which cover a natural action of the group of two-torsion points of $\Pic^0(X)$ on $\mathcal{M}_X^B$.  It turns out that the representation of this group of automorphisms on $H^0(\mathcal{M}^B_X, \mathcal{L}_X)$ is equivalent to the Schr\"odinger representation of the Heisenberg group (of level 2).  

Let
\begin{equation}
J_X[2] := \{ L \in \text{Pic}^0(X) \mid L^2 \cong \mathcal{O}_X\} 
\end{equation} 
be the group of two-torsion points of $\Pic^0(X)$. Let $E \to X$ be a semistable vector bundle of rank two with trivial determinant. The tensor product $E \otimes L \to X$, for any $L \in J_X[2]$, is again a semistable vector bundle of rank two with trivial determinant, and if $E \sim_s E'$, then $(E \otimes L) \sim_s (E' \otimes L)$, see \cite{AM} section 1 for more details. This operation of tensoring with two-torsion points of $\Pic^0(X)$ therefore gives a well-defined action on the moduli space of bundles, 
\begin{equation}
\label{999}
J_X[2] \times \mathcal{M}^B_X \to \mathcal{M}^B_X.
\end{equation}   
Let $\mathcal{G}(X)$ be the group of automorphisms of the line bundle $\mathcal{L}_X \to \mathcal{M}^B_X$ which cover the action \eqref{999}. For all $L \in J_X[2]$, we have that $L^*(\mathcal{L}_X) \cong \mathcal{L}_X$, see \cite{AM} page four or remarque 2.6 in \cite{Be}, which implies that  the group $\mathcal{G}(X)$ is a central extension
\begin{equation}
1 \to \mathbb{C}^{*} \to \mathcal{G}(X) \to J _X [2] \to 1,
\end{equation}   
see section 2 of \cite{AM} for more details. 
Following \cite{GJ}, we study the action of $ \mathcal{G}(X)$ on $H^0(\mathcal{M}^B_X, \mathcal{L}_X)$ 
by using Beauville's isomorphism  between $H^0(\mathcal{M}_X^B, \mathcal{L}_X)$ and theta functions of order two associated with $X$. It should be mentioned that the methods developed to study this action of $\mathcal{G}(X)$ in \cite{AM} are intrinsic and exploit the geometry of the moduli space $\mathcal{M}_X^B$.   

Let $A$ be a complex abelian variety and $L\to A$ an arbitrary line bundle. Associated to $(A, L)$ is the group of automorphisms of $L$ which cover translations in $A$. This group is called the theta group of $(A, L)$, see section 6.1 of \cite{BL}  for more details. Let  $\mathcal{G}(2\Theta^0_X)$ be the theta group of $(J(X), \mathcal{O}(2\Theta^0_X))$ where $J(X):=\Pic^0(X)$ is the Jacobian of $X$ and $\Theta^0_X :=\{L \otimes \kappa^{-1} | L \in \Theta_X\} \subset J(X)$ is a translate of the canonical theta divisor \eqref{tdiv}  by a theta characteristic $\kappa \in \Pic^{g-1}(X)$, i.e. $\kappa^2\cong K_X$ where $K_X$ is the canonical bundle of $X$.  It is well known that the linear equivalence class of the divisor $\Theta^0_X$ is independent of the choice of the theta characteristic\footnote{This point needs to be discussed, it might be that the translates by any $\beta \in \Pic^{g-1}(X)$ are linearly equivalenent}, see theorem 11.2.4 in \cite{BL}, thus the isomorphism class of $\mathcal{O}(2\Theta^0_X)$ is also independent of the choice of the theta characteristic. The theta group $\mathcal{G}(2\Theta^0_X)$ is a central extension, see \cite{GP} or proposition 6.1.1 of \cite{BL}     
\begin{equation}
\label{thetagroup}
1 \to \mathbb{C}^{*} \to \mathcal{G}(2\Theta_X^0) \to J _X [2] \to 1,
\end{equation}   
and its action on $H^0(J(X), \mathcal{O}(2\Theta_X^0))$ is the unique (up to isomorphism) irreducible one in which the subgroup $\C^*$ acts by multiplication, see section 6.4 of \cite{BL}. In \cite{Be}, see also \cite{Be2}, a canonical isomorphism is proved
\begin{equation}
\label{biso}
H^0(\mathcal{M}_X^B, \mathcal{L}_X) \cong H^0(J(X), \mathcal{O} (2\Theta_X^0)),
\end{equation}
 it turns out that $ \mathcal{G}(X) \cong \mathcal{G}(2\Theta_X^0)$ and isomorphism \eqref{biso} intertwines the two  representations, see remarque 2.6 in \cite{Be}, page 6 of \cite{AM} or \cite{GP}.  
 
The representation of  $\mathcal{G}(2\Theta_X^0)$ on $H^0(J(X),\mathcal{O} (2\Theta_X^0))$ has another description which we recall. Let  
\begin{equation}
\label{multiplication}
H_g := \C^* \times \ZZ_2^g \times \ZZ_2^g
\end{equation} 
and denote by 
\begin{equation}
\text{E} \colon (\ZZ_2^g \times \ZZ_2^g)^2 \to \C^*,
\end{equation}
the standard alternating form, see e.g. page 160 of \cite{BL}. The set $H_g$ admits a group structure with multiplication given by 
\begin{equation}
(\alpha, x_1, x_2)\cdot(\beta, y_1, y_2)= (\alpha\beta\,E(x_1, y_2), x_1+y_1, x_2 + y_2).
\end{equation}
This is a non-abelian group with centre $\{(a, 0, 0)\in H_g\}$. This group is called the Heisenberg group (of level 2). It is isomorphic to $\mathcal{G}(2\Theta_X^0)$ , and an isomorphism which is the identity on the subgroups $\C^*$, i.e.
\begin{equation}
\label{thetastructure}
\alpha \colon  \mathcal{G}(2\Theta_X^0) \mathrel{\mathop{\rightarrow}^{\mathrm{\cong}}_{\mathrm{}}}   H_g, \quad \quad \alpha \vert _{\C^*}=\Id \vert _{\C^*},
\end{equation}
is called a \emph{theta structure}. There are only finitely many choices of theta structures, see theorem 6.6.7 of \cite{BL} or \cite{GP}. 

Let 
\begin{equation}
\label{defF}
F_g:= \{f \colon \ZZ_2^g \to \C\} 
\end{equation}
be the $2^g$ dimensional complex vector space of complex valued functions on the group $\ZZ_2^g$. The group $H_g$ has a unique irreducible representation on $F_g$, called the Schr\"odinger representation, in which the centre $\{(a, 0, 0)\}$ of $H_g$ acts by multiplication, see \cite{BL} page 164 or \cite{GP} for an explicit construction of this representation. We denote this representation by 
\begin{equation}
    \label{sch}
    S \colon H_g \to \GL(F_g).
\end{equation}
The main result concerning this representation is that given a choice of a theta structure $\alpha \colon \mathcal{G}(2\Theta_X^0) \to H_g$, there exists a unique (up to scaler multiplication) isomorphism 
\begin{equation}
\label{hiso}
H^0(J(X), \mathcal{O}(2\Theta^0_X)) \cong F_g, 
\end{equation}
which intertwines the two representations, see section 6.7 of \cite{BL} or \cite{GP}. The isomorphisms \eqref{biso} and \eqref{hiso} together give the isomorphism 
\begin{equation}
\label{bhiso}
\PP H^0(\mathcal{M}^B_X, \mathcal{L}_X) \cong \PP F_g
\end{equation}
which only depends on the choice of a theta structure. Since $F_g$ has a canonical basis given by the delta functions, a choice of a theta structure thus gives a canonical basis (up to scaling) of $H^0(\mathcal{M}^B_X, \mathcal{L}_X)$.

Consider the following subgroup of automorphims of $H_g$, 
\begin{equation}
A(H_g) := \{ \beta \in \Aut(H_g) \mid\beta(a, 0, 0)= a\}. 
\end{equation}
The Schr\"odinger  representation \eqref{sch} is the unique irreducible representation in which the centre $\{(a, 0, 0)\}$ of $H_g$ acts by multiplication. Given any $h \in A(H_g)$, the representation $S\circ h$ enjoys the same  property. Schur's lemma, see \cite{GJ} section 4.2.4 for more details, then provides a projective representation
\begin{equation}
\label{prorep}
T_g \colon A(H_g) \to \PP \GL (F_g).
\end{equation}

We gather some results on the structure of the group $A(H_g)$ which will be used later. The following is well known, see \cite{BL} lemma 6.6.6 or \cite{GJ} page 217, 
\begin{equation}
\label{sesAG}
0 \to \ZZ_2^{2g} \to A(H_g) \to \Sp(2g, \ZZ_2) \to 0.
\end{equation} 
Another explicit description of $A(H_g)$ is given in  \cite{igu}, see also \cite{sas}. Recall Igusa's subgroup (of level (2, 4)) of $\Sp(2g, \ZZ)$   
\begin{equation}
\label{Gamma24}
\Gamma_g(2, 4) = \left \{ \begin{pmatrix}
  I \!+\!2A & \!2B  \\
  2C & \!I\! +\! 2D \\
  \end{pmatrix} \in \Sp(2g, \Z) \mid \text {diag}(B)\equiv \text {diag}( C)\equiv 0 \pmod 2 \right \}.
\end{equation}
The group \eqref{Gamma24} is a normal subgroup of $\Sp(2g, \ZZ)$, and the quotient is isomorphic to 
\begin{equation}
\label{iso1}
\Sp(2g, \ZZ)/\Gamma_g(2, 4) \cong A(H_g), 
\end{equation} 
For an explicit construction of this isomorphism, see \cite{sas} page 336.

Assume that $X$ has genus $g=2$. In this case, recall from the previous section that the Narasimhan-Ramanan isomorphism implies that
\begin{equation}
\label{nrviso1}
S^k (H^0(\mathcal{M}^B_X, \mathcal{L}_X)) = H^0(\mathcal{M}^B_X, \mathcal{L}^k_X).
\end{equation}
It follows from \eqref{bhiso} and \eqref{nrviso1} that a choice of a theta structure gives a canonical isomorphism 
\begin{equation}
\label{kiso}
\PP H^0(\mathcal{M}^B_X, \mathcal{L}^k_X)  \cong \PP S^k (F_g).
\end{equation}
 Another special feature of genus two is the isomorphism 
\begin{equation}
\label{iso2}
\Sp(4, \ZZ_2) \cong S_6
\end{equation}  
where $S_6$ is the symmetric group, see \cite{CF}. This isomorphism and \eqref{sesAG} imply that $A(H_2)$ can be described as an extension of $S_6$ by $\ZZ^4_2$.

\subsection{The Hitchin connection}
\label{HC}

In this section we study the construction of the moduli space $\mathcal{M}^B_X$ over a family of genus two compact Riemann surfaces. Let 
\begin{equation}
    \mathcal{C}_{\infty} := \{ (z_1,...,z_6) \in 
    (\C\PP^1)^6 \mid z_i \neq z_j \,\,\text{for any}\,\, i \neq j \}.
\end{equation}
Let $\cC$ be the open subset $\{ (z_1,...,z_6) \in \cC_{\infty} \mid z_i \neq \infty\,\, \text{for any} \,\,1 \leq i \leq 6 \}$ of $\cC_{\infty}$. Any $\bf{z}$ $= (z_1, \dots, z_6) \in \cC_{\infty}$ defines the following affine curve   

\begin{equation}
\label{affinecurve}
    X^A_{\bf{z}} := \bigg\{ (x, y) \in \C^2 \mid  \,\,y ^2 = \bigg\{
  \begin{array}{l l}
     \prod_i ^6 (x - z_i) & \quad \text{if $\bf{z} \in \cC$}\\
   \prod_i ^5 (x - z_i) & \quad \text{if ${\bf{z}} \notin \cC$ with $z_i=\infty$ excluded} 
  \end{array} \bigg \}.
\end{equation}
Associated to this curve is a unique genus two compact Riemann surface, see section VII of \cite{FK} for more details, which we denote by $X_{\bf{z}}$. For any Riemann surface $X_{\bf{z}}$, we have the associated moduli space $\mathcal{M}^B_{X_{\bf{z}}}$ of semistable vector bundles of rank two with trivial determinant on $X_{\bf{z}}$,  defined in \eqref{modulidef}. From \cite{GJ} section 2.5, it follows that there exists a proper holomorphic submersion,
\begin{equation}
\label{fiberbundle}
p\colon \mathcal{M} \to \cC_{\infty},
\end{equation} 
with fiber $p^{-1}({\bf{z}})= \mathcal{M}^B_{X_{\bf{z}}}$. Moreover, there exists a line bundle $\mathcal{L} \to \mathcal{M}$ such that $\mathcal{L} \vert _{p^{-1}({\bf{z}})} = \mathcal{L}_{X_{{\bf{z}}}}$; the ample generator of $\Pic(\mathcal{M}^B_{X_{\bf{z}}})$. For every positive integer $k$, denote the vector bundle $p_*(\mathcal{L}^k_{X_{\bf{z}}}) \to \cC_{\infty}$ by 
\begin{equation}
\label{Vbundle}
\mathcal{V}^{\pkp} \to \cC_{\infty}.
\end{equation} 
By definition, the fiber of this bundle is given by $\mathcal{V}^{\pkp}_{{\bf{z}}}= H^0(\mathcal{M}^B_{X_{\bf{z}}}, \mathcal{L}^k_{X_{{\bf{z}}}})$, and by Narasimhan-Ramanan isomorphism, \eqref{nrviso1}, ensure that $\mathcal{V}^{\pkp}_{{\bf{z}}}= S^k(\mathcal{V}^{(1)}_{{\bf{z}}})$.

\subsubsection{Riemann surfaces with theta structure}
Recall from section \ref{HC} that any compact Riemann surface $X$ is has an associated theta group $\mathcal{G}(2\Theta^0_X)$, and that a theta structure is a choice of an isomorphism between $\mathcal{G}(2\Theta^0_X)$ and the (level 2) Heisenberg group. A Riemann surface with a (level 2) theta structure is a pair $(X, \alpha)$, where $X$ is a compact Riemann surface and $\alpha$ is a theta structure as defined in \eqref{thetastructure}.    The moduli spaces of such pairs are known to exist and turn out to be finite-degree coverings of the moduli spaces of compact Riemann surfaces, see \cite{igu}.  

In \cite{GJ}, an unramified sixteen-fold covering $P \colon \widetilde{\cC}_{\infty} \to \cC_{\infty}$ is shown to exist such that each fiber $P^{-1}(\bf{z})$ is in bijection with the set of all (level 2) theta structures on the Riemann surface $X_{\bf{z}}$. In particular, every ${\widetilde{\bf{z}}} \in \widetilde{\cC}_{\infty}$ corresponds to a pair $(X_{P(\widetilde{\bf{z}})}, \alpha)$, i.e. the Riemann surface $X_{P(\widetilde{\bf{z}})}$ with a choice of a theta structure $\alpha$. On this covering the pull back of the projectivization of \eqref{Vbundle} trivializes, see lemma \ref{pbttc}. We recall this covering. 

The symmetric group $S_6$ acts on $\cC_{\infty}$ by permuting coordinates. The quotient $\overline\cC_{\infty}:=\cC_{\infty}/S_6$ is the space of unordered configurations of six points on $\C\PP^1$. It is well known that $\pi_1(\overline{\cC}_{\infty})$ is $SB_6$, where $SB_6$ denotes the spherical braid group of degree six, see  section 1.5 of \cite{Bir} for an introduction to spherical braid groups. We have the following diagram of group morphisms, 
\begin{equation}
\begin{tikzpicture}[every node/.style={midway}]
 \matrix[column sep={8em,between origins},
        row sep={2em}] at (0,0)
 {\node(A) {}; & \node(A) {}; & \node(B) {$\Sp(4, \ZZ)/\Gamma_2(2, 4) \cong A(H(2))$};\\ 
 \node(R)   {$SB_6$}  ; & \node(S) {$\Sp(4, \ZZ)$}; \\
  \node(R/I) {$$};&\node(A) {}; & \node(C) {$\Sp(4, \ZZ_2)\cong S_6$};\\
 };

\draw[->] (R)   -- (S) node[anchor=south] {$\tau$};
\draw[->] (S) --(B) node[anchor=south]{$a$};
\draw[->](S)--(C) node[anchor=north]{$b$};
\draw[->](B) --(C) node[anchor=west]{$c$};
\end{tikzpicture}
\end{equation}
where the construction of $\tau$, which is surjective, is given on page 79 of \cite{CB} \footnote{There is a typo in this reference, the inverse sign on two of the generators ($T_2$ and $T_4$) is missing.}, and $a$, $b$, and $c$ are naturally induced. The two isomorphisms follow from \eqref{iso1} and \eqref{iso2}. Since all the morphisms are surjective and $b=c\circ a$, see page 13 of \cite{sas}, it follows that $\text{Ker}(b\circ \tau) \supset \text{Ker}(a\circ \tau)$ and $ \text{Ker}(b\circ \tau)/\text{Ker}(a\circ \tau) \cong \text{Ker}(c)$. This gives the short exact sequence 
\begin{equation}
\label{sesker}
1 \to \text{Ker}(a\circ \tau)  \to \text{Ker}(b \circ \tau) \to \text{Ker}(c) \to 1. 
\end{equation}
By definition of $\overline{\cC}_{\infty}$, we have that $ \text{Ker}(b \circ \tau)\cong \pi_1(\cC_{\infty})$. This isomorphism together with \eqref{sesker} implies that there exists a covering    
$P \colon \widetilde{\cC}_{\infty} \to \cC_{\infty}$
with $\pi_1(\widetilde{\cC}_{\infty}) \cong \text{Ker} (a \circ \tau)$ and deck group $\text{Ker}(c)$. From \eqref{sesAG} we know that $\text{Ker}(c)\cong \ZZ^4_2$, which implies that $\widetilde{\cC}_{\infty}$ is a sixteen-fold cover.   
\begin{definition}
\label{apb}
Let $V \to M$ be a holomorphic vector bundle over a complex manifold $M$. Denote by $\PP(V) \to M$ the bundle of projective spaces with fibers $\PP(V)_x := \PP(V_x)$. We call $\PP(V) \to M$ the projective bundle associated with $V \to M$. 
\end{definition}

The following is part of theorem 4.4.2 in  \cite{GJ}. 
\begin{lemma}
\label{pbttc}
The pull back of the projective bundle associated with \eqref{Vbundle} to the covering ${P} \colon \widetilde\cC_{\infty} \to \cC_{\infty}$ trivializes, in particular, 
\begin{equation}
\label{994}
{P}^{*}(\PP(\mathcal{V}^{\pkp})) \cong \PP(S^k(F_2)) \times \widetilde{\cC}_{\infty}, 
\end{equation}
where the vector space $F_2$ is defined in \eqref{defF}. 
\end{lemma}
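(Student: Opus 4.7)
The plan is to observe that every point of $\widetilde{\cC}_\infty$ carries, tautologically, a level-$2$ theta structure on the associated genus-two Riemann surface, use the Beauville and Schr\"odinger isomorphisms to convert this theta structure into a canonical fiberwise trivialization, and then upgrade to a global isomorphism of projective bundles by invoking the family-theoretic versions of these constructions.

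First, I would spell out the fiberwise picture. Fix $\widetilde{\mathbf{z}} \in \widetilde{\cC}_\infty$ lying over $\mathbf{z} = P(\widetilde{\mathbf{z}}) \in \cC_\infty$. By the construction of $\widetilde{\cC}_\infty$ through the short exact sequence \eqref{sesker}, the deck group of $P$ is $\mathrm{Ker}(c) \cong \ZZ_2^4$, and a standard unwinding of the fibered diagram identifies $P^{-1}(\mathbf{z})$ with the set of theta structures $\alpha\colon \mathcal{G}(2\Theta^0_{X_{\mathbf{z}}}) \to H_2$ in the sense of \eqref{thetastructure}. Thus each $\widetilde{\mathbf{z}}$ comes equipped with a distinguished theta structure $\alpha_{\widetilde{\mathbf{z}}}$. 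Combining Beauville's isomorphism \eqref{biso} with the Schr\"odinger intertwiner \eqref{hiso} associated to $\alpha_{\widetilde{\mathbf{z}}}$ yields a canonical projective isomorphism $\PP H^0(\mathcal{M}^B_{X_{\mathbf{z}}}, \mathcal{L}_{X_{\mathbf{z}}}) \cong \PP F_2$; the scalar ambiguity in \eqref{hiso} disappears after projectivizing. Taking $k$-th symmetric powers and invoking the Narasimhan--Ramanan identification \eqref{nrviso1}, valid because the genus is two, gives a canonical fibrewise isomorphism
\[
\PP\bigl(\mathcal{V}^{\pkp}_{\mathbf{z}}\bigr) = \PP H^0\bigl(\mathcal{M}^B_{X_{\mathbf{z}}}, \mathcal{L}^k_{X_{\mathbf{z}}}\bigr) \;\cong\; \PP S^k(F_2),
\]
depending only on $\alpha_{\widetilde{\mathbf{z}}}$.

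Next, I would promote this to a global isomorphism of projective bundles over $\widetilde{\cC}_\infty$. The family \eqref{fiberbundle}, together with the relative line bundle $\mathcal{L} \to \mathcal{M}$, organizes these vector spaces into the bundle $\mathcal{V}^{\pkp}$, and similarly the relative Jacobian of the universal genus-two curve over $\cC_\infty$ carries a relative polarization $2\Theta^0$ and hence a relative theta group that fits into a central extension of $J_X[2]$ by $\C^*$ over $\cC_\infty$. The cover $\widetilde{\cC}_\infty$ is by construction precisely the frame bundle that trivializes this relative theta group against the constant group $H_2$; the assignment $\widetilde{\mathbf{z}} \mapsto \alpha_{\widetilde{\mathbf{z}}}$ is therefore holomorphic by construction. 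The family version of Beauville's isomorphism and the family Schr\"odinger intertwiner are holomorphic, so the fiberwise projective isomorphism above glues to a holomorphic morphism $P^*\PP(\mathcal{V}^{\pkp}) \to \PP(S^k(F_2)) \times \widetilde{\cC}_\infty$ of projective bundles that is a bijection on each fiber, hence an isomorphism.

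The main obstacle is the family/holomorphicity step: one must confirm that the relative theta group, the relative Beauville isomorphism, and the Schr\"odinger intertwiner all behave coherently in families parametrized by $\cC_\infty$, and that passing to the cover $\widetilde{\cC}_\infty$ genuinely rigidifies the scalar indeterminacy in \eqref{hiso} in a holomorphic way. This is precisely the content of the family constructions set up in section 4.4 of \cite{GJ}, and I would cite those results rather than rebuild the family theory from scratch; the fiberwise argument above then supplies the geometric content of the identification \eqref{994}.
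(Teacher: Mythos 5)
Your proposal is correct and follows essentially the same route as the paper: identify each point of $\widetilde{\cC}_\infty$ with a genus-two surface equipped with a theta structure, use that theta structure (via Beauville's isomorphism, the Schr\"odinger representation, and the Narasimhan--Ramanan identification, i.e.\ \eqref{kiso}) to get a canonical fiberwise isomorphism with $\PP(S^k(F_2))$, and glue. You simply spell out more of the fiberwise mechanism and the family/holomorphicity step, both of which the paper delegates to \cite{GJ}.
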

\begin{proof}
Each $\tilde{\bf{z}}\in \widetilde{\cC}_{\infty}$ corresponds to a pair $(X_{P(\tilde{\bf{z}})}, \alpha)$ where $\alpha$ is a (level 2) theta structure on the Riemann surface $X_{P(\tilde{\bf{z}})}$, see section 4.5 of  \cite{GJ} for details.  A theta structure on $X_{P(\tilde{\bf{z}})}$ gives a canonical isomorphism $\PP(\mathcal{V}^{\pkp}_{X_{P(\tilde{\bf{z}})}}) \cong \PP(S^k(F_2))$, see \eqref{kiso}. This implies that each fiber of  ${P}^{*}(\PP(\mathcal{V}^{\pkp}))$ is canonically isomorphic to $\PP(S^k(F_2))$ and the proof follows.  
\end{proof}

\subsubsection{Projectively flat connections}
We recall the (\v{C}ech) description of a projectively flat connection, see section 2.3.3 and 2.3.6 of \cite{GJ}, page 369 of \cite{Hi}, and section 3.3 of \cite{Fel}.  
\begin{definition}
\label{pfc}
Let $\PP(V) \to M$ be the projective bundle associated with a holomorphic vector bundle $V \to M$ over a complex manifold $M$. A projective connection in  $\PP(V) \to M$ is the data $\{U_i, \nabla_i\}$, where $\{U_i\}$ is an open covering of $M$ and $\nabla_i$ is a connection in the restriction $V \vert _{U_i}$ such that on any intersection $U_i \cap U_j$ the difference $\nabla_i - \nabla_j$ is scalar multiple of the identity, i.e. 
\begin{equation*}
\nabla_i - \nabla_j = \alpha \cdot \Id
\end{equation*}
where $\alpha$ is a holomorphic 1-form on   $U_i \cap U_j$ and $\Id$ is the multiplicative identity of the ring $\Gamma(U_i \cap U_j, \End(V)_{|U_i \cap U_j})$. Moreover, a projective connection $\{U_i, \nabla_i\}$ is a projectively flat connection if each $\nabla_i$ is flat\footnote{Usually, a weaker condition that each $\nabla_i$ is \emph{projectively} flat is imposed.}.   
\end{definition}
Two projective connections $\{U_i, \nabla_i\}$ and $\{U'_i, \nabla'_i\}$ are called equivalent if on any intersection $U_i \cap U'_j$ the difference $\nabla_i - \nabla'_j$ is a scaler multiple of the identity. If $G$ is a group acting by automorphisms on $M$, then a projective connection $\{U_i, \nabla_i\}$ is invariant under the action of $G$ if  $\{U_i, \nabla_i\}$ and $\{g^*(U_i), g^*(\nabla_i)\}$ are equivalent for all $g \in G$.

A flat connection in a holomorphic vector bundle $V \to M$ induces a unique projectively flat connection in $\PP(V) \to M$.  Two flat connections induce equivalent projectively flat connections if their difference is a scaler multiple of the identity.
 Upon the choice of a base point $x \in M$, the monodromy of a projectively flat connection provides a group homomorphism 
\begin{equation}
\mathfrak{M} \colon \pi_1(M, x) \to \Aut(\PP V_x), 
\end{equation}  
see the before mentioned references for details.

Recall the open subset $\cC := \{(z_1, \dots, z_6) \in \cC_{\infty} \mid z_i \neq \infty\}$. We outline the construction of the flat connection, see \eqref{hitchinc}, in the bundle  
\begin{equation}
\label{tbc}
S^k(F_2) \times \cC
\end{equation}
carried out in \cite{GJ}. This connection is then used to prove proposition \ref{pfcc} and corollary \ref{pfctc}. 

Denote the delta function basis of $F_2$ by $\langle x_1, x_2, x_3, x_4 \rangle$. Let $\partial_i $ be the differential operator defined by 
\begin{equation}
\label{partial}
 \partial_i (x_j)= \delta_{ij}:=\left\{
  \begin{array}{l l}
    1 \,\,\,\text{if $i=j$}\\
   0 \,\,\, \text{if $i \neq j$ }
  \end{array} \right.
  \end{equation}
For all $1 \leq i < j \leq 6 $ consider the differential operators 
\begin{align*}
&\Omega^{1,2}:= -(x_1\partial_1 + x_2\partial_2 - x_3\partial_3 - x_4\partial_4)^2 \quad  \quad
\Omega^{1,3}:= -(x_1\partial_4 - x_2\partial_3 - x_3\partial_2 + x_4\partial_1)^2\\
&\Omega^{1,4} := +(x_1\partial_4 + x_2\partial_3 - x_3\partial_2 - x_4\partial_1)^2\quad \quad
\Omega^{1,5} := +(x_1\partial_3 - x_2\partial_4 - x_3\partial_1 + x_4\partial_2) ^2\\
&\Omega^{1,6} := -(x_1\partial_3 + x_2\partial_4 + x_3\partial_1 + x_4\partial_2)^2\quad \quad
\Omega^{2, 3} := +(x_1\partial_4 - x_2\partial_3 + x_3\partial_2 - x_4\partial_1)^2\\
&\Omega^{2, 4}  := -(x_1\partial_4 + x_2\partial_3 + x_3\partial_2 + x_4\partial_1)^2\quad \quad
\Omega^{2, 5}  := -(x_1\partial_3 - x_2\partial_4 + x_3\partial_1 - x_4\partial_2)^2\\
&\Omega^{2, 6}  := +(x_1\partial_3 + x_2\partial_4 - x_3\partial_1 - x_4\partial_2)^2\quad \quad
\Omega^{3, 4}  := -(x_1\partial_1 - x_2\partial_2 + x_3\partial_3 - x_4\partial_4)^2\\
&\Omega^{3, 5}  := -(x_1\partial_2 + x_2\partial_1 + x_3\partial_4 + x_4\partial_3)^2\quad \quad
\Omega^{3, 6}  := +(x_1\partial_2 - x_2\partial_1 - x_3\partial_4 + x_4\partial_3)^2\\
&\Omega^{4, 5}  := +(x_1\partial_2 - x_2\partial_1 + x_3\partial_4 - x_4\partial_3)^2\quad \quad
\Omega^{4, 6}  := -(x_1\partial_2 + x_2\partial_1 - x_3\partial_4 - x_4\partial_3)^2\\
&\Omega^{5, 6}  := -(x_1\partial_1 - x_2\partial_2 - x_3\partial_3 + x_4\partial_4)^2
,
\end{align*}
where the composition law is 
\begin{equation}
\label{complaw}
    (x_i\partial_j) \circ (x_k \partial_l)= x_ix_k\partial_j \partial_l + 
    \delta_{jk}x_i\partial_l.
\end{equation}
Let $\widehat{\Omega}^{i,j}$ denote the degree two part of $\Omega^{i,j}$. 
The operator \eqref{partial} has an induced action on any monomial $x_1^ax_2^bx_3^cx_4^d$, see section 4.5 of \cite{GJ}. For a positive integer $k$, choose the basis of $S^k(F_2)$ given by monomials $x_1^ax_2^bx_3^cx_4^d$ such that $a+b+c+d=k$. For all $1 \leq i < j \leq 6$, the induced action of the operator $\widehat{\Omega}^{i,j}$ on $S^k(F_2)$  preserves  $S^k(F_2)$, see section 3.2.5 of \cite{GJ}, and we denote this induced action by $\widehat{\Omega}^{i,j}_k \in \End (S^k(F_2))$. These endomorphisms satisfy the infinitesimal braid relations, i.e.    
\begin{equation}
\label{ibr}
[\widehat\Omega^ {s,t}_k, \widehat\Omega^ {u,v}_k]=0, \quad \quad [\widehat\Omega^ {s,u}_k, \widehat\Omega^ {s,t}_k + \widehat\Omega^ {t,u}_k] = 0 
\end{equation}
where $s, t, u, v$ are all distinct, see proposition 3.2.4 of \cite{GJ}.

For a positive integer $k$, let 
\begin{equation}
\label{planckdef}
\kbar := {-1\over 16(k+2)},
\end{equation}
and consider the following $\End(S^k(F_2))$ valued holomorphic 1-form on $\cC$
\begin{equation}
\label{HFD}
\omega^{\pkp}:=\kbar\sum_{1 \leq i<j \leq 6} \widehat{\Omega}^{i,j}_k {dz_i - dz_j \over z_i - z_j}.
\end{equation} 
Since \eqref{tbc} is a trivial vector bundle, the expression 
\begin{equation}
\label{hitchinc}
\bnabla_0^{\pkp} := d + {\omega}^{\pkp},  
\end{equation}
where $d$ denotes the trivial connection, defines a connection in \eqref{tbc}. Moreover,  using the relations \eqref{ibr} and Arnold's trick, see proposition XIX.2.1 of \cite{Kass}, it can be shown that the curvature 2-form of the connection \eqref{hitchinc} vanishes, i.e. 
\begin{equation}
-d\omega^{\pkp} + \omega^{\pkp}\wedge \omega^{\pkp} = 0.
\end{equation} 
We note the following relations, called the Ward identities in two-dimensional conformal field theory, see \cite{kohno}, which will be used later. 
\begin{proposition}
\label{ward}
For a positive integer $k$,  we have that 
\begin{flalign*}
&\qquad \quad 1) \,\sum_{i < j} \widehat{\Omega}^{i,j}_k = 3\kbar k^2 \Id^{\pkp} &\\
&\qquad \quad 2) \,\sum_{i \neq j} \widehat{\Omega}^{i,j}_k = \kbar k^2\Id^{\pkp} \qquad  \text{for any fixed $j$}\\
&\qquad \quad 3) \,\sum_{i < j} \widehat\Omega^{i,j}_k (z_i + z_j)= 6\kbar k^2\Id^{\pkp} \sum_{i < j} z_i + z_j
\end{flalign*}
where $\Id^{\pkp}$ is the multiplicative identity of the ring $\End(S^k(F_2))$.
\end{proposition}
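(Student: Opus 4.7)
My plan is to prove all three Ward identities by direct computation based on the explicit formulas for the operators $\Omega^{i,j}$ listed above the proposition. Each such $\Omega^{i,j}$ has the form $\varepsilon^{i,j}(L^{i,j})^2$ with $\varepsilon^{i,j}=\pm 1$ and $L^{i,j}=\sum_{a,b=1}^{4}c^{i,j}_{ab}\,x_a\partial_b$ a linear, degree-preserving operator. By the composition law \eqref{complaw}, the ``degree two part'' $\widehat{\Omega}^{i,j}$ retains only the pure $x_a x_c\,\partial_b\partial_d$ terms, with coefficients $\varepsilon^{i,j}c^{i,j}_{ab}c^{i,j}_{cd}$. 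Hence every assertion in the proposition reduces to a finite, structured sum of these coefficients.

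For identity (1), the plan is to fix indices $a,b,c,d\in\{1,\dots,4\}$ and sum the 15 coefficients $\varepsilon^{i,j}c^{i,j}_{ab}c^{i,j}_{cd}$ over $i<j$. The signs $\varepsilon^{i,j}$ and entries $c^{i,j}_{ab}$ are arranged precisely so that the off-diagonal patterns cancel pairwise, leaving only a scalar multiple of the quadratic-quadratic part of $\bigl(\sum_a x_a\partial_a\bigr)^2$. Since the Euler operator $\sum_a x_a\partial_a$ acts by the scalar $k$ on $S^k(F_2)$, this restricted square acts as $k^2\cdot\Id^{\pkp}$, and tracking the overall numerical factor yields the constant $3\kbar k^2$.

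Identity (2) follows from the analogous computation restricted to the 5 pairs containing the fixed index $j$, and identity (3) from the manifest symmetry $\widehat{\Omega}^{i,j}_k=\widehat{\Omega}^{j,i}_k$ (each is a square), which allows one to rewrite
\begin{equation*}
\sum_{i<j}\widehat{\Omega}^{i,j}_k(z_i+z_j)=\sum_{a=1}^{6}z_a\sum_{b\ne a}\widehat{\Omega}^{a,b}_k;
\end{equation*}
applying identity (2) to the inner sum then yields the claimed formula after the combinatorial count $\sum_{i<j}(z_i+z_j)=5\sum_a z_a$.

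The main obstacle is simply the bookkeeping: verifying the off-diagonal cancellations across all 15 operators by hand is tedious though mechanical. A cleaner alternative I would use as a sanity check is to recognize the collection $\{L^{i,j}\}$ as an orthogonal basis (with respect to the trace form on $\End(F_2)$) of the Lie subalgebra $\mathfrak{sp}(4,\C)\subset\End(F_2)$ acting on its Schr\"odinger representation, so that $\sum_{i<j}\varepsilon^{i,j}(L^{i,j})^2$ is essentially the quadratic Casimir; Schur's lemma applied to the irreducible $\mathfrak{sp}(4,\C)$-module $S^k(F_2)$ then makes identities (1) and (2) automatic, and (3) follows as above.
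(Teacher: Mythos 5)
Your overall strategy --- direct computation for (1) and (2), then deducing (3) by the symmetrization $\sum_{i<j}\widehat{\Omega}^{i,j}_k(z_i+z_j)=\sum_a z_a\sum_{b\neq a}\widehat{\Omega}^{a,b}_k$ --- is exactly what the paper does (its proof reads, in full, ``Direct computations, notice that $(2)$ implies $(3)$''), and the pairwise cancellation of off-diagonal coefficient patterns is indeed the mechanism that makes the sums scalar. However, three of your concrete claims fail as stated. First, the degree-two part of $\bigl(\sum_a x_a\partial_a\bigr)^2$ is not $E^2$ but $E^2-E$: by the composition law \eqref{complaw}, $\sum_{a,c}(x_a\partial_a)\circ(x_c\partial_c)=\sum_{a,c}x_ax_c\partial_a\partial_c+E$, so the purely second-order piece acts on $S^k(F_2)$ by $k(k-1)$, not $k^2$; any constant extracted your way carries a factor $k(k-1)$. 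Second, (3) with the stated constant does not follow from (2) with the stated constant: your (correct) symmetrization plus (2) gives $\kbar k^2\,\Id^{\pkp}\sum_a z_a=\tfrac{1}{5}\kbar k^2\,\Id^{\pkp}\sum_{i<j}(z_i+z_j)$, which differs from the asserted $6\kbar k^2\,\Id^{\pkp}\sum_{i<j}(z_i+z_j)$ by a factor of $30$. Either the Proposition's constants are internally inconsistent (they appear to be: (1) and (2) cohere, (3) does not), or your assertion that the count $\sum_{i<j}(z_i+z_j)=5\sum_a z_a$ ``yields the claimed formula'' is a non sequitur. Since what Proposition \ref{pfcc} actually uses is only that these sums are scalars, you should state explicitly which constant your computation produces rather than claiming agreement with the displayed one.

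Third, the proposed sanity check via the quadratic Casimir of $\mathfrak{sp}(4,\C)$ cannot work on dimension grounds: there are $15$ operators $L^{i,j}$, linearly independent in $\End(F_2)$, whereas $\dim_\C\mathfrak{sp}(4,\C)=10$. The correct structure is that the $L^{i,j}$ are (up to sign) the images of the $15$ nontrivial elements of $J_X[2]\cong\ZZ_2^4$ under the Schr\"odinger representation of section \ref{heisenberg}; together with the identity they form a basis of $\End(F_2)$, so $\sum_{i<j}\varepsilon^{i,j}(L^{i,j})^2$ is an average over the finite Heisenberg group, not a Lie-algebra Casimir, and Schur's lemma cannot be applied directly to $S^k(F_2)$, which is not irreducible as an $H_2$-module for $k\geq 2$. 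The direct computation must therefore carry the argument on its own; the Lie-theoretic shortcut, as formulated, is not available.
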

\begin{proof}
Direct computations, notice that $(2)$ implies $(3)$. 
\end{proof}
The automorphism group of $\C\PP^1$ acts on $\cC_{\infty}$ by automorphisms. It is well-known that the group of automorphisms of $\C\PP^1$ is isomorphic to $\PP\SL(2, \C)$  and generated 
by the following three transformations, see page 10 of \cite{maskit}, 
\begin{equation}
\label{td}
 T(z)= z + a, \quad D(z) = bz, \quad and \quad I(z)={1\over z}
 \end{equation}
where $z \in \C\PP^1$, $a \in \C$, and $b \in \C^*$. These three transformations are called translations, dilations, and inversion respectively.    

\begin{proposition}
\label{pfcc}
The connection \eqref{hitchinc} in the bundle \eqref{tbc} determines a unique (up to equivalence) projectively flat connection $\bnabla^{\pkp}:= \{U_i, \nabla^{\pkp}_i\}$ in the projective bundle  
\begin{equation}
\PP( S^k(F_2)) \times \cC_{\infty}.
\end{equation}
Moreover, this projectively flat connection $\bnabla^{\pkp}$ is invariant under the $\PP\SL(2, \C)$ action on $\cC_{\infty}$. 
\end{proposition}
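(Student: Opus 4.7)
The plan is to establish three things in sequence: flatness of the explicit connection $\bnabla_0^{\pkp}$ on $\cC$, its invariance under the three generators \eqref{td} of $\PP\SL(2,\C)$, and a glueing argument that uses this invariance to extend it to a projectively flat projective connection on all of $\cC_\infty$. Flatness amounts to verifying $-d\omega^{\pkp} + \omega^{\pkp}\wedge\omega^{\pkp} = 0$. I would expand the wedge, group the terms by the pairs of index pairs appearing in the products $\widehat\Omega^{s,t}_k\widehat\Omega^{u,v}_k$, and then use the infinitesimal braid relations \eqref{ibr} together with Arnold's identity $\omega_{ij}\wedge\omega_{jk}+\omega_{jk}\wedge\omega_{ki}+\omega_{ki}\wedge\omega_{ij}=0$ for the logarithmic 1-forms $\omega_{ij}:=(dz_i-dz_j)/(z_i-z_j)$ to force the curvature to vanish. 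This is exactly the argument that proves flatness of the KZ connection, cf.\ Proposition XIX.2.1 of \cite{Kass}.

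Next I would check invariance on $\cC$ for each generator of $\PP\SL(2,\C)$. Translations $T(z)=z+a$ and dilations $D(z)=bz$ leave every $\omega_{ij}$ pointwise invariant, so $T^*\omega^{\pkp}=D^*\omega^{\pkp}=\omega^{\pkp}$. The only nontrivial case is the inversion $I(z)=1/z$, for which a direct computation gives
\[
I^*\omega_{ij} \;=\; \omega_{ij}-\frac{dz_i}{z_i}-\frac{dz_j}{z_j}.
\]
Substituting this back into $\omega^{\pkp}$ and reorganizing the sum according to the index of the differential yields
\[
I^*\omega^{\pkp}-\omega^{\pkp} \;=\; -\kbar\sum_{l=1}^{6}\frac{dz_l}{z_l}\sum_{j\neq l}\widehat\Omega^{l,j}_k.
\]
Ward identity (2) of Proposition \ref{ward} collapses the inner sum to a scalar multiple of $\Id^{\pkp}$, so $I^*\bnabla_0^{\pkp}-\bnabla_0^{\pkp}$ is a scalar 1-form multiple of the identity. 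Thus, in the sense of Definition \ref{pfc}, $I^*\bnabla_0^{\pkp}$ and $\bnabla_0^{\pkp}$ induce equivalent projective connections on $\cC$; combined with $T$ and $D$ this yields invariance under the full $\PP\SL(2,\C)$.

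Finally I would use this invariance to extend the projective connection to $\cC_\infty$. Cover $\cC_\infty$ by the open sets $U_a := \{(z_1,\ldots,z_6)\in\cC_\infty : z_i\neq a \text{ for all }i\}$ for $a\in\C\PP^1$, and for each $a$ pick a M\"obius transformation $\phi_a\in\PP\SL(2,\C)$ sending $a\mapsto\infty$. Then $\phi_a$ carries $U_a$ biholomorphically onto $\cC$, and I declare the local connection on $U_a$ to be $\nabla_a^{\pkp}:=\phi_a^*\bnabla_0^{\pkp}$. On any overlap $U_a\cap U_b$, the two local choices differ by the pullback along $\phi_a\circ\phi_b^{-1}\in\PP\SL(2,\C)$, which by the previous step changes $\bnabla_0^{\pkp}$ only by a scalar 1-form; hence the local connections determine the same projective connection on the overlap and assemble into a well-defined projective connection $\bnabla^{\pkp}=\{U_a,\nabla_a^{\pkp}\}$ on $\cC_\infty$ extending $\bnabla_0^{\pkp}$. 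Projective flatness is preserved by pullback, and invariance under $\PP\SL(2,\C)$ is automatic from the construction; uniqueness up to equivalence follows from density of $\cC$ in $\cC_\infty$ and connectedness. I expect the main technical delicacy to be the inversion computation: the pullback must land precisely on the combination $\sum_{j\neq l}\widehat\Omega^{l,j}_k$ to which the Ward identity applies, and any bookkeeping error there would obstruct the glueing.
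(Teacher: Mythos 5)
Your proposal is correct and follows essentially the same route as the paper: pull back $\bnabla_0^{\pkp}$ along M\"obius transformations to cover $\cC_\infty$, observe that translations and dilations preserve $\omega^{\pkp}$ exactly, and reduce the inversion case to the Ward identities so that the discrepancy is a scalar 1-form times $\Id^{\pkp}$. The only cosmetic difference is that you resum $\sum_{i<j}\widehat\Omega^{i,j}_k(dz_i/z_i+dz_j/z_j)$ over the differential index and invoke identity (2) of Proposition \ref{ward}, whereas the paper applies identity (3) directly (which it derives from (2) anyway).
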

\begin{proof}
We must produce the data $\{U_i, \nabla^{\pkp}_i\}$ satisfying the conditions of definition \ref{pfc}. It is easy to construct an open covering $\{U_i\}$ of $\cC_{\infty}$ such that each $U_i = \tau_i^*(\cC)$ (WE NEED TO GIVE $\tau_i's$) for some $\tau_i \in \PP\SL(2, \C)$. Let $\tau_0$ be the identity. Then $\bnabla^{\pkp}_0$, defined in \eqref{hitchinc}, is a flat connection in $S^k(F_2) \times U_0$. For each $U_i$, we define 
\begin{equation}
\nabla^{\pkp}_i := \tau_i^*(\bnabla^{\pkp}_0) = d + \tau_i^*(\omega^{\pkp}) 
\end{equation}  
which gives a connection in the restriction $S^k(F_2) \times U_i$. Moreover, since $\tau_i$ is a biholomorphism, the curvature of $\nabla^{\pkp}_i$ is the pull back of the curvature of  $\bnabla^{\pkp}_0$, since the curvature of $\bnabla^{\pkp}_0$ vanishes, it follows that $\nabla^{\pkp}_i$ is flat. To show that $\nabla^{\pkp}_i - \nabla^{\pkp}_j$ is a scaler multiple of the identity on the intersections $U_i \cap U_j$, we compute that on the intersection $\cC \cap \tau^*(\cC)$, 
$$\bnabla^{\pkp}_0 - \tau^*(\bnabla^{\pkp}_0) = 0$$
 when $\tau$ is a translation or a dilation, and 
\begin{align*}
\label{smi}
\bnabla^{\pkp}_0 -  \tau^*(\bnabla^{\pkp}_0) &= \kbar  \sum_{1 \leq i < j \leq 6} \widehat{\Omega}^{i,j}_k\big({dz_i \over z_i} + {dz_j \over z_j}\big) \\
&= 6\kbar k\Id^{\pkp} \sum_{1 \leq i < j \leq 6} \big({dz_i \over z_i} + {dz_j \over z_j}\big) 
\end{align*}
when $\tau$ is the inversion. The second equality follows from identity (3) of proposition \ref{ward}, notice also that the 1-form $6\kbar k\sum_{1 \leq i < j \leq 6} \big({dz_i \over z_i} + {dz_j \over z_j}\big)$ is holomorphic on the intersection. These three cases suffice since $\PP\SL(2, \C)$ is generated by translations, dilations, and the inversion. 

Moreover, these calculations also show that two different choices of the set of automorphisms $\{\tau_i\}$ give equivalent projectively flat connections, thus the connection \eqref{hitchinc} uniquely determines (up to equivalence) the projectively flat connection. Lastly, by its construction, it is clear that $\{U_i, \nabla^{\pkp}_i\}$ is invariant under the $\PP\SL(2, \C)$ action.       
\end{proof}
Recall the covering ${P} \colon \widetilde{\cC}_{\infty} \to \cC_{\infty}$.
\begin{corollary}
\label{pfctc}
The pull back $${P}^*(\bnabla^{\pkp})=\{{P}^*(U_i), {P}^*(\nabla^{\pkp}_i)\}$$ defines a unique projectively flat connection in ${P}^*(\PP(\mathcal{V}^{\pkp})$. 
\end{corollary}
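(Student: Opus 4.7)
The plan is to exploit the fact that $P\colon \widetilde{\cC}_\infty \to \cC_\infty$ is an unramified covering, hence a local biholomorphism, together with the trivialization of $P^*(\PP(\mathcal{V}^{\pkp}))$ provided by Lemma \ref{pbttc}. The content of the corollary should then reduce to checking that the three conditions of Definition \ref{pfc} (local connection data, scalar-difference on overlaps, flatness on each patch) all pull back naturally under a holomorphic map.

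First I would observe that, since $P$ is a covering, $\{P^{-1}(U_i)\}$ is an open covering of $\widetilde{\cC}_\infty$, which we take as $\{P^*(U_i)\}$. Under the canonical trivialization $P^*(\PP(\mathcal{V}^{\pkp})) \cong \PP(S^k(F_2)) \times \widetilde{\cC}_\infty$ of Lemma \ref{pbttc}, the restriction of this trivial bundle to $P^*(U_i)$ inherits the connection $P^*(\nabla^{\pkp}_i) = d + P^*(\tau_i^*\omega^{\pkp})$, where $\omega^{\pkp}$ is the 1-form of \eqref{HFD}. Since pullback commutes with exterior derivative and with wedge product, the curvature satisfies
\begin{equation*}
-d\bigl(P^*(\tau_i^*\omega^{\pkp})\bigr) + P^*(\tau_i^*\omega^{\pkp})\wedge P^*(\tau_i^*\omega^{\pkp}) = P^*\bigl(-d(\tau_i^*\omega^{\pkp}) + (\tau_i^*\omega^{\pkp})\wedge(\tau_i^*\omega^{\pkp})\bigr) = 0,
\end{equation*}
the last equality following from the flatness of each $\nabla^{\pkp}_i$ established in Proposition \ref{pfcc}.

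Next, on the overlap $P^*(U_i)\cap P^*(U_j)$, Proposition \ref{pfcc} supplies a holomorphic 1-form $\alpha_{ij}$ on $U_i\cap U_j$ with $\nabla^{\pkp}_i-\nabla^{\pkp}_j = \alpha_{ij}\cdot \Id^{\pkp}$. Pulling back,
\begin{equation*}
P^*(\nabla^{\pkp}_i)-P^*(\nabla^{\pkp}_j) = P^*(\alpha_{ij})\cdot \Id^{\pkp},
\end{equation*}
and $P^*(\alpha_{ij})$ is a holomorphic 1-form on $P^*(U_i)\cap P^*(U_j)$ because $P$ is holomorphic. This verifies the scalar-multiple-of-identity condition of Definition \ref{pfc}, so $P^*(\bnabla^{\pkp})$ is a genuine projectively flat connection in $P^*(\PP(\mathcal{V}^{\pkp}))$.

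Finally, uniqueness up to the equivalence relation on projective connections follows by the same pullback argument applied to the uniqueness statement in Proposition \ref{pfcc}: if $\{U'_i,\nabla'_i\}$ is another choice of local data on $\cC_\infty$ equivalent to $\{U_i,\nabla^{\pkp}_i\}$, then their pullbacks differ by scalar multiples of the identity on each overlap $P^{-1}(U_i\cap U'_j)$, hence define equivalent projectively flat connections on $\widetilde{\cC}_\infty$. There is no serious obstacle here; the only point that requires a little care is making sure the canonical trivialization of Lemma \ref{pbttc} is compatible with the pullback of the local connection data, which is immediate since both are defined fiberwise and the trivialization is induced by the theta structure attached to each point of $\widetilde{\cC}_\infty$.
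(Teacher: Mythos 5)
Your proof is correct and follows the same route as the paper, which simply states that the corollary follows from Lemma \ref{pbttc}; you have merely written out the routine verifications (pullback of the cover, of curvature, and of the scalar differences on overlaps) that the paper leaves implicit. No issues.
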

\begin{proof}
Follows from lemma \ref{pbttc}. 
\end{proof}

\section{Teichm\"uller curves and pseudo-Anosov mapping classes}
\label{TC}
In this section we recall some basic results in Teichm\"uller theory and Teichm\"uller curves. We end this section with a discussion of our main object of interest: the Veech curve. Let $S_g$ be a closed, connected  and oriented surface of genus $g \geq 2$. A \emph{marked} Riemann surface is a pair $(X, f)$ where $X$ is a smooth compact connected  Riemann surface and $f \colon S_g \to X$ is an orientation preserving diffeomorphism. Let 
\begin{equation}
\mathcal{T}_g := (X, f) / \sim, 
\end{equation}  
where $(X, f) \sim (X', f')$ if there exists a biholomorphism $g \colon X \to X'$ such that $ g \circ f$ is isotopic to $f'$. The group $\Gamma_g$ acts on $\mathcal{T}_g$ as follows. For any $[\Psi] \in \Gamma_g$ let  
\begin{equation}
\label{mcgaction}
[\Psi] \cdot[(X, f)] := [(X, f \circ \Psi^{-1})].   
\end{equation}
The class $[(X, f \circ \Psi^{-1})]$ is independent of the choice of the representative in $[\Psi]$ as any two isotopic markings are equivalent, this implies that \eqref{mcgaction} gives a well defined group  action of $\Gamma_g$ on $\mathcal{T}_g$. 

 $\mathcal{T}_g$ has the structure of a complex manifold which is homeomorphic to an open ball in $\C^{3g-3}$. Traditional approach to constructing the complex structure is by Bers embedding of $\mathcal{T}_g$ into the complex vector space of holomorphic quadratic differentials on a (reference) compact Riemann surface of genus $g$, see chapter 6 of \cite{IT} for details. A differential geometric approach to the complex structure is given by the \emph{Weil-Petersson} metric, see section 3 and theorem 3.3 in \cite{UH}.  In the algebro-geometric approach one shows that for all  $[(X, f)] \in \mathcal{T}_g$ there exists a neighbourhood which can be identified with the base space of a Kuranishi family of (equivalence classes of) marked compact Riemann surfaces of genus $g$ with central fiber isomorphic to $[(X, f)]$. By the universal property of these Kuranishi families, and the fact that a marked Riemann surface has trivial automorphisms, these local neighborhoods can be glued together into a single Kuranishi family, thus giving $\mathcal{T}_g$ the structure of a complex manifold, see \cite{ACG} chapter XV for details. In all the three topologies induced by these three complex structures, it can be shown that $\mathcal{T}_g$ is an open ball in $\C^{3g-3}$. From now on we consider $\mathcal{T}_g$ as a complex manifold with complex structure induced from the Bers embedding. 
 
H. L. Royden, see \cite{Roy}, showed that the action \eqref{mcgaction} of $\Gamma_g$ on $\mathcal{T}_g$ is by biholomorphic automorphisms and the group of biholomorphic automorphims of $\mathcal{T}_g$, for genus $g\geq3$, can be identified with $\Gamma_g.$ The group of biholomorphic automorphims of $\mathcal{T}_2$ is equivalent to $\Gamma_2$ modulo the hyperelliptic involution. Moreover, the action of $\Gamma_g$ on $\mathcal{T}_g$ is properly discontinuous but not free, see theorem 6.18 in \cite{IT}, or proposition 1.13 in \cite{UH} for the proof. The quotient 
 \begin{equation}
 \label{mc}
 \mathcal{M}_g := \mathcal{T}_g / \Gamma_g 
 \end{equation}
 is the moduli space of compact Riemann surfaces of genus $g$. Since the action \eqref{mcgaction} is not free, the space $\mathcal{M}_g$ is a complex orbifold of dimension $3g-3$, the orbifold singularities correspond to (equivalence classes of) Riemann surfaces which admit non-trivial automorphisms. Since $\mathcal{T}_g$ is simply connected, we have the identification 
 \begin{equation}
 \Gamma_g \cong \ofg(\mathcal{M}_g, *). 
 \end{equation} 
 
 Like any complex manifold, $\mathcal{T}_g$ can be equipped with the \emph{Kobayashi pseudo-metric}, see \cite{kob} for a comprehensive introduction, see also page 167 of \cite{IT}. If the Kobayashi pseudo-metric is complete (or non-degenerate), see page 168 of \cite{kob}, then the complex manifold is called \emph{Kobayashi hyperbolic}, or a \emph{hyperbolic complex space}.
 More classically, a metric on $\mathcal{T}_g$ is introduced by using (the existence and uniqueness of) minimal dilation quasi-conformal maps between compact Riemann surfaces equipped with holomorphic quadratic differentials of the same type\footnote{i.e, the number and the multiplicity of the zeroes of the quadratic differentials match}, this metric is called the \emph{Teichm\"uller metric}, see section 5.1.3 of \cite{IT} for more details. H. L Royden showed that the Kobayashi pseudo-metric is equivalent to the Teichm\"uller metric, see \cite{Roy} or theorem 6.21 in \cite{IT}, and since the Teichm\"uller metric is complete, see theorem 5.4 in \cite{IT}, it follows that $\mathcal{T}_g$ is Kobayashi hyperbolic. 
 
Since the Kobayashi metric is always invariant under the group of biholomorphic automorphisms, see \cite{kob} or \cite{IT}, the complex orbifold $\mathcal{M}_g$ is also Kobayashi hyperbolic. Let $V$ be a hyperbolic Riemann surface, i.e. $V \cong \HH/G$ where $G$ is a discrete subgroup of $\PP \SL(2, \R)$, and let
 \begin{equation}
 \label{tc}
 \phi \colon V \to \mathcal{M}_g 
 \end{equation} 
 be a generically injective holomorphic map. The pair $(V, \phi)$ is called a \emph{Teichm\"uller curve} if $V$ has finite hyperbolic area and $\phi$ is an isometry with respect to the Kobayashi metric on the domain\footnote{ On $\HH$, and thus on $V$, the Kobayashi metric is equivalent to the Poincar\'e metric normalized such that its curvature is -4, see page 20 of \cite{kob} and section 1 of \cite{Mc}.}  and the codomain. In other words, a Teichm\"uller curve is an irreducible algebraic curve in $\mathcal{M}_g$ which is a totally geodesic sub manifold for the Kobayashi (=Teichm\"uller) metric. 
 
The simplest example of a Teichm\"uller curve is $(\mathcal{M}_1, \Id)$, where $\mathcal{M}_1$ is the moduli space of compact Riemann surfaces of genus one. For $g \geq 2$, the existence of Teichm\"uller curves already follows from W. Thurston's construction of pseudo-Anosov mapping classes, see section 8 of  \cite{thu}.  From \cite{HS} it follows that for any $g \geq 2$ there exist  countably many Teichm\"uller curves in $\mathcal{M}_g$. C. T. McMullen, see \cite{McL}, constructed an infinite and explicit family of (primitive)\footnote{i.e. These Teichm\"uller curves don't cover Teichm\"uller curves in lower genera.} Teichm\"uller curves in $\mathcal{M}_2$,  he also discovered that the image of any Teichm\"uller curve in $\mathcal{M}_2$ always lies on a Hilbert modular surface\footnote{Hilbert modular surfaces, see Hirzebruch's seminal paper \cite{hir} and the book \cite{vdg} for a general introduction, live in $\mathcal{A}_2$, the moduli space of abelian surfaces. Here we mean the image of the Teichm\"uller curve under the Torelli map $\mathcal{M}_2 \to \mathcal{A}_2$ always lies in a Hilbert modular surface.}. He then extended his construction to genus $g=3 \,\text{and} \,4$ in \cite{McP}. 
A complete classification of (primitive) Teichm\"uller curves in $\mathcal{M}_2$ follows from the work of \cite{McL}, \cite{Mcclassification}, \cite{Mc} and \cite{MMMW}, we refer to section 5.5 in \cite{MMT} for more details. W. Veech, in \cite{Veech}, constructed two Teichm\"uller curves for each $g \geq 2$, the image of both of these lie in the hyperelliptic locus of $\mathcal{M}_g$. In \cite{BM}, using algebro-geometric methods a large number of Teichm\"uller curves are produced, the strong point of this approach being that the family of Riemann surfaces that these Teichm\"uller curves parametrize can be described explicitly as a (complex) one parameter family of polynomials. For further details on this approach we refer to   the excellent article \cite{MMT}. More recently, the authors in \cite{CMZ} have innovated a new description of (certain) Teichm\"uller curves in terms of theta functions, from their viewpoint these Teichm\"uller curves lead to a genuinely new type of modular forms.

Typical construction of Teichm\"uller curves comes from \emph{Veech surfaces.} If $(X, f)$ is a marked Riemann surface, then a holomorphic quadratic differential $q \in H^0(X, (\Omega^1_X)^{\otimes 2})$ generates an embedding
 \begin{equation}
 \label{tdq}
 \widetilde{\phi}_{(X, q)} \colon \HH \to \mathcal{T}_g,
 \end{equation}
which is an isometry for the Kobayashi metrics on the domain and the codomain. The image $\widetilde{\phi}_{(X, q)}(\HH)$ is a complex geodesic for the Kobayashi metric on $\mathcal{T}_g$. Such an embedding is called the \emph{Teichm\"uller disc} generated by the pair $(X, q)$ \footnote{Since the marking does not play any role in what follows, we drop it from the notation. A pair $(X, q)$ is also called a half translation surface.}, see \cite{thu} and \cite{Veech} for the construction of such discs. Let $\Stab(\widetilde{\phi}_{(X, q)})$ be the subgroup of $\Gamma_g$ which preserves the image $\widetilde{\phi}_{(X, q)}(\HH)$ set wise. The embedding \eqref{tdq}, upon passing to the quotient by the action of $\Gamma_g$, induces the complex geodesic  
\begin{equation}
f \colon \HH \to \mathcal{T}_g \to \mathcal{M}_g
\end{equation}
 which factors through $\HH/\Stab(\widetilde{\phi}_{(X, q)})$. If $\Stab(\widetilde{\phi}_{(X, q)})$ is a lattice in $\Aut(\widetilde{\phi}_{(X, q)}(\HH))(=\PP\SL(2,\R))$, i.e. $\HH/\Stab(\widetilde{\phi}_{(X, q)})$ has finite hyperbolic area, then the induced map, which we also denote by $f$; 
 \begin{equation}
 f \colon \HH/\Stab(\widetilde{\phi}_{(X, q)}) \to \mathcal{M}_g
 \end{equation}
 is proper and generically injective. It follows that the pair $(\HH/\Stab(\widetilde{\phi}_{(X, q)}), f)$ is a Teichm\"uller curve. For further details on the structure of the group $\Stab(\widetilde{\phi}_{(X, q)})$, several examples of groups which arise as $\Stab(\widetilde{\phi}_{(X, q)})$,  and open problems related to the group  $\Stab(\widetilde{\phi}_{(X, q)})$ we refer to the excellent survey article \cite{martina}. A pair $(X, q)$ such that $\Stab(\widetilde{\phi}_{(X, q)})$ is a lattice is called a Veech surface. Veech surfaces are intimately related with polygonal billiard tables which have optimal dynamics, for dynamical aspects of Veech surfaces we refer to \cite{Veech}.   
  
Assume that $(\HH/\Stab(\widetilde{\phi}_{(X, q)}), f)$ is a Teichm\"uller curve. Our interest in $\Stab(\widetilde{\phi}_{(X, q)})$ lies in the fact that this subgroup of $\Gamma_g$ contains infinitely many pseudo-Anosov mapping classes which can be described explicitly. Denote by $V:= \HH/\Stab(\widetilde{\phi}_{(X, q)})$, and let $\gamma$ be a closed geodesic (with respect to the hyperbolic metric) in $V$. Since $\ofg(V) \cong  \Stab(\widetilde{\phi}_{(X, q)})$ it follows that the homotopy class $[\gamma] \in \ofg(V)$ corresponds to a unique (up to conjugation) element in $\Stab(\widetilde{\phi}_{(X, q)})$, denote this element by $\varphi_{\gamma}$.       
Let $\widetilde{\gamma}$ be a lift of $\gamma$ such that $\widetilde{\gamma} \subset \widetilde{\phi}_{(X, q)}(\HH)$, c.f. \eqref{tdq}. From the theory of hyperbolic Riemann surfaces, we know that the action of $\varphi_{\gamma}$ on $\widetilde{\phi}_{(X, q)}(\HH)$ preserves the geodesic $\widetilde{\gamma}$.  
 
L. Bers, see section 6.5 of \cite{IT}, showed that an element of $\Gamma_g$ is pseudo-Anosov if and only if its action on $\mathcal{T}_g$ preserves a (real) geodesic with respect to the Teichm\"uller, or equivalently Kobayashi, metric. By construction, $\widetilde{\gamma}$ is a (real) geodesic with respect to the Kobayashi metric which is preserved by $\varphi_{\gamma}$. It follows that $\varphi_{\gamma}$ is a pseudo-Anosov element in $\Gamma_g$. In this way, to any closed geodesic in $V$ we can associate a pseudo-Anosov element in $\Gamma_g$. Moreover, if the generators of   $\Stab(\widetilde{\phi}_{(X, q)})$ are known, then any pseudo-Anosov associated to a closed geodesic in $V$ can be written as a unique word (up to conjugation) in these generators.

Let $n=2g+1$ and $n'=2g+2$ with $g \geq 2$. For all  $n$, let $\Stab({\widetilde{\phi}}_n):= \Stab({\widetilde{\phi}}_{(X_n, q_n)})$  where $X_n$ is the unique compact hyperelliptic Riemann surface defined by the polynomial $y^2 = x^n +1$ and $q_n = c_n {dx^2 \over y^2}$, here $c_n$ is a positive constant such that $\norm{q_n} =1$\footnote{see, for example, \cite{Veech} for the definition of this norm}. Define $\Stab({\widetilde{\phi}}_{n'})$ by replacing $n$ with $n'$ in the definition of $\Stab({\widetilde{\phi}}_n)$.  In \cite{Veech} it is shown that the group $\Stab({\widetilde{\phi}_n})$ is isomorphic to the Hecke triangle group of signature $(2, n, \infty)$, and that the group $\Stab({\widetilde{\phi}}_{n'})$ is isomorphic to the Hecke triangle group of signature $({n'\over2}, \infty, \infty)$.

Denote the Teichm\"uller curve corresponding to the lattice $\Stab({\widetilde{\phi}_n})$ by $(\chi_n, \phi_n)$. An algebraic description of the map 
\begin{equation}
\label{phin}
\phi_n \colon \chi_n \to \mathcal{M}_g,
\end{equation}
given in \cite{BM}, \cite{Mc}, and \cite{Lo}, is as follows. Let $\widetilde{\chi}_n := (\C\PP^1-\{\mu_n\})$, where $\mu_n$ denotes the set of $n$'th roots of unity. Let $t$ be the restriction of the global rational coordinate on $\C\PP^1$ to $\widetilde{\chi}_n$ and let $D_n \subset \Aut(\widetilde{\chi}_n)$ denote the dihedral group generated by $t \mapsto \zeta_n^2t$  and $t \mapsto 1/ t$ where $\zeta_n := \exp({2\pi\sqrt{-1}\over n})$. For all $t \in \tilde{\chi}_n$ let $F_t$ be the unique compact hyperelliptic Riemann surface associated with the polynomial 
\begin{align*}
    y^2= \prod_{i=0}^n (x - \zeta_n^{i} - t\zeta_n^{-i}).
\end{align*}
It turns out that $\chi_n \cong \widetilde{\chi}_n/D_n$ and the isomorphism class of $Y_t$ depends only on $[t] \in \tilde{\chi}/D$. The map  \eqref{phin}  is then simply given, after realizing $\chi_n$ as the quotient $\widetilde{\chi}_n/D_n$, by $\phi_n([t])= [F_t] \in \mathcal{M}_g$.

We restrict to the case $g=2$ and $n=5$ and drop $n$ as a subscript from the notation. A map $\widetilde{\chi} \to {\cC}_{\infty}$ is given by 
\begin{equation}
\label{emb}
\tilde \phi(t) =  (1+t, \zeta +\zeta^{-1}t,  \zeta^2 +\zeta^{-2}t, \zeta^3 +\zeta^{-3}t, \zeta^4 +\zeta^{-4}t, \infty).
\end{equation}


 
\begin{definition}
\label{nusigma}
Let $S_n$ be the symmetric group of degree $n$. Define  $\nu , \nu' \in S_5 \triangleleft S_6$ by $\nu(z_1,\dots , z_6) = (z_5, z_1, z_2, z_3, z_4, z_6)$ and $\nu'(z_1, \dots ,z_6)=(z_1, z_5, z_4, z_3, z_2, z_6).$  
\end{definition}
It is clear from definition that $\nu^5=\nu'^2=\Id$. We have the following relationship between $\nu$ and  $R$ and between $\nu'$ and $I$ see \cite{Lo}.   
\begin{lemma}
\label{RI}
$\tilde\phi(R(t)) =\zeta \nu(\tilde\phi(t)) $ and $\tilde\phi(I(t))={1\over t}\nu'(\tilde\phi(t))$. Notice that the second equality is only defined for $t$ not equal to $0$ or $\infty$.
\end{lemma}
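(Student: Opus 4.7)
The proof is a direct mechanical computation, so my plan is simply to verify both identities component by component using the relation $\zeta^5 = 1$. I will set up the notation by writing the coordinates of $\tilde\phi(t)$ uniformly as $z_i(t) = \zeta^{i-1} + \zeta^{-(i-1)} t$ for $1 \leq i \leq 5$ and $z_6(t) = \infty$, so that the effect of $\nu$ and $\nu'$ on $\tilde\phi(t)$ is easy to track via the permutation of indices from Definition \ref{nusigma}.

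For the first identity, I will compute
\begin{equation*}
\tilde\phi(R(t)) \;=\; \tilde\phi(\zeta^2 t) \;=\; \bigl(\zeta^{i-1} + \zeta^{3-i}\,t\bigr)_{1 \leq i \leq 5},\; \infty
\end{equation*}
by substitution, and compare this with $\zeta\,\nu(\tilde\phi(t))$, whose $i$-th entry for $i \geq 2$ is $\zeta \cdot z_{i-1}(t) = \zeta^{i-1} + \zeta^{2-(i-1)}t$ and whose first entry is $\zeta\cdot z_5(t) = \zeta^5 + \zeta^{-3}t = 1 + \zeta^2 t$. Reducing every exponent modulo $5$ then matches the two sides entry by entry; the sixth coordinate $\infty$ is fixed by both sides.

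For the second identity, I will similarly compute $\tilde\phi(I(t)) = \tilde\phi(1/t)$, whose $i$-th coordinate is $\zeta^{i-1} + \zeta^{-(i-1)}/t$, and compare it with $\tfrac{1}{t}\nu'(\tilde\phi(t))$, whose $i$-th entry for $2 \leq i \leq 5$ is $\tfrac{1}{t}\,z_{7-i}(t) = \tfrac{1}{t}\bigl(\zeta^{6-i} + \zeta^{i-6}\,t\bigr) = \zeta^{i-6} + \zeta^{6-i}/t$, whose first entry is $\tfrac{1}{t}(1+t) = 1 + 1/t$, and whose sixth entry is $\infty$. Using $\zeta^{6-i} = \zeta^{1-i}$ and $\zeta^{i-6} = \zeta^{i-1}$ then shows term-by-term equality.

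The only ``obstacle''---and it is a very minor one---is bookkeeping: keeping track of the two different permutations on the five finite coordinates and consistently reducing exponents of $\zeta$ modulo $5$. No conceptual input beyond the definitions of $\tilde\phi$, $R$, $I$, $\nu$, and $\nu'$ is required, which is why the paper attributes the result to \cite{Lo} rather than proving it in detail.
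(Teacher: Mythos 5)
Your proof is correct and follows essentially the same route as the paper: a direct coordinate-by-coordinate verification of both identities, differing only in that you multiply $\nu(\tilde\phi(t))$ by $\zeta$ (resp.\ $\nu'(\tilde\phi(t))$ by $1/t$) where the paper multiplies $\tilde\phi(R(t))$ by $\zeta^{-1}$ (resp.\ $\tilde\phi(I(t))$ by $t$). The exponent bookkeeping checks out in every entry, so nothing further is needed.
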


\begin{proof}
We compute
\begin{align*}
\zeta^{-1}\tilde\phi(R(t))&= \zeta^{-1}(1+\zeta^2t, \zeta +\zeta t,  \zeta^2 + t, \zeta^3 +\zeta^{-1}t, \zeta^4 +\zeta^{-2}t, \infty)\\
&= (\zeta^{-1}+\zeta t, 1 + t,  \zeta +\zeta^{-1}t, \zeta^2 +\zeta^{-2}t, \zeta^3 +\zeta^{-3}t, \infty).
\end{align*}
 Now, the last tuple is the same as  
 \begin{equation}
  (\zeta^4+\zeta^{-4} t, 1 + t,  \zeta +\zeta^{-1}t, \zeta^2 +\zeta^{-2}t, \zeta^3 +\zeta^{-3}t, \infty) = \nu(\tilde\phi(t)).
  \end{equation}
By a similar calculation, we obtain that 
\begin{equation}
t\tilde\phi(I(t)) = (1+t, \zeta^{4} +\zeta^{-4}t,  \zeta^3 +\zeta^{-3}t, \zeta^2 +\zeta^{-2}t, \zeta +\zeta^{-1}t, \infty)= \nu'(\tilde\phi(t)).
\end{equation}
\end{proof}
Recall that $\mathcal{M}_2$ is the quotient of $\cC_{\infty}$ by the product group $\PP\SL(2, \C) \times S_6$. Since multiplication by $\zeta$ and ${1 \over t}$ are both elements of the dilation subgroup of $\PP \SL(2, \C)$ this lemma immediately shows that  the map \eqref{emb} descends to give a well defined map $ \phi \colon \chi \to \mathcal{M}_2$ and that we get the following commutative diagram

\begin{equation}
\label{CD}
\begin{tikzpicture}[every node/.style={midway}]
\matrix[column sep={7em,between origins},
        row sep={3em}] at (0,0)
{ \node(R)   {$\tilde \chi$}  ; & \node(S) {$\cC_{\infty}$};    \\
  \node(A)   {$\chi$}  ; & \node(B) {$\mathcal{M}_2$};  \\};
\draw[<-] (A) -- (R) node[anchor=east]  {$\pi_{\tilde \chi}$};
\draw[<-] (B) -- (A) node[anchor=north] {${\phi}$};
\draw[->] (R)   -- (S) node[anchor=south] {$\tilde\phi$};

\draw[<-] (B) -- (S)node[anchor=west] {$\pi_{\mathcal{M}_2}$};
\end{tikzpicture}
\end{equation}

\subsection{Hitchin connection and Hyperlogarithms on the Veech curve }
\label{iteratedintegralsection}
Consider the space
\begin{equation*}
\tilde\chi_0 := \C - \mu_{10}
\end{equation*}
and the embedding 
\begin{equation}
\tilde\psi \colon \tilde\chi_0 \to \cC_{\infty}
\end{equation} 
defined by 
\begin{equation}
\label{tildepsi}
\tilde\psi(t) = \bigg({1\over \zeta +\zeta^{-1}t}, { 1 \over \zeta^2 +\zeta^{-2}t},  {1 \over \zeta^3 +\zeta^{-3}t}, {1 \over \zeta^4 +\zeta^{-4}t}, {1 \over1+t}, 0 \bigg), \quad \text{for}\, \text{all}\, t \in \tilde\chi_0.
\end{equation}
\begin{lemma}
\label{900}
We have that $ \pi_{\mathcal{M}_2}(\tilde\psi(\tilde\chi_0)) \subset \phi(\chi)$.
\end{lemma}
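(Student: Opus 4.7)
The plan is to exhibit an explicit element of $\PP\SL(2,\C) \times S_6$ that carries $\tilde\phi(t)$ to $\tilde\psi(t)$ for every $t \in \tilde\chi_0$. Once this is done, invariance of the quotient map $\pi_{\mathcal{M}_2} \colon \cC_{\infty} \to \mathcal{M}_2$ under this product group, together with the commutative diagram \eqref{CD}, immediately yields $\pi_{\mathcal{M}_2}(\tilde\psi(t)) = \pi_{\mathcal{M}_2}(\tilde\phi(t)) = \phi([t]) \in \phi(\chi)$.

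Comparing the two tuples, the natural candidate is the composition of the coordinate-wise inversion $I(z) = 1/z \in \PP\SL(2,\C)$ with the inverse of the cyclic permutation $\nu$ from Definition \ref{nusigma}, i.e.\ the $5$-cycle $\sigma \in S_6$ sending $(z_1,\ldots,z_6)$ to $(z_2,z_3,z_4,z_5,z_1,z_6)$. Indeed, since $I(\infty) = 0$, applying $I$ coordinate-wise to $\tilde\phi(t)$ gives
\begin{equation*}
I \cdot \tilde\phi(t) = \left(\tfrac{1}{1+t}, \tfrac{1}{\zeta+\zeta^{-1}t}, \tfrac{1}{\zeta^2+\zeta^{-2}t}, \tfrac{1}{\zeta^3+\zeta^{-3}t}, \tfrac{1}{\zeta^4+\zeta^{-4}t}, 0\right),
\end{equation*}
and applying $\sigma$ cyclically rotates this into exactly $\tilde\psi(t)$ as defined in \eqref{tildepsi}.

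The one place where we must be careful is that the inversion $I$ is only defined on points of $\cC_{\infty}$ whose coordinates are all nonzero (allowing $\infty \mapsto 0$). Here the hypothesis $t \in \tilde\chi_0 = \C - \mu_{10}$ is precisely what is needed: since $\mu_{10} = \mu_5 \sqcup (-\mu_5)$, the condition $t \notin \mu_5$ ensures $\tilde\phi(t) \in \cC_{\infty}$ is defined, while $t \notin -\mu_5$ ensures each factor $\zeta^i + \zeta^{-i}t$ is nonzero (as $\zeta^i + \zeta^{-i}t = 0 \iff t = -\zeta^{2i}$, and $\{-\zeta^{2i}\}_{i=0}^{4} = -\mu_5$). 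Thus $I \cdot \tilde\phi(t)$ remains in $\cC_{\infty}$.

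There is no genuine obstacle here — the proof reduces to the two-line verification above, together with the observation that $\tilde\chi_0$ is exactly the locus on which this manipulation is legal. The only mildly subtle point is tracking the bookkeeping of the permutation and confirming $\mu_{10} = \mu_5 \sqcup (-\mu_5)$ to justify the extension of $\tilde\phi$-type data to the larger base $\tilde\chi_0$.
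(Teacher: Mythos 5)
Your proof is correct and follows essentially the same route as the paper's: both observe that $\tilde\psi$ is obtained from $\tilde\phi$ by composing with the inversion $z\mapsto 1/z$ and a coordinate permutation, and then invoke invariance of $\pi_{\mathcal{M}_2}$ under $\PP\SL(2,\C)\times S_6$ together with the commutative diagram \eqref{CD}. Your version is in fact slightly more careful: you identify the permutation explicitly as the $5$-cycle $(z_1,\dots,z_6)\mapsto(z_2,z_3,z_4,z_5,z_1,z_6)$ (the paper loosely describes it as swapping the first and second-last coordinates, which is immaterial here since the quotient is by all of $S_6$), and you verify that the restriction to $\tilde\chi_0=\C-\mu_{10}$ is exactly what makes the inversion legal.
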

\begin{proof}
Notice that $\tilde\psi$ is given by applying the involution ${1\over z} \in \PP\SL(2, \C)$ and a permutation (which permutes the first and the second last coordinate) to $\tilde\phi$. Since $\mathcal{M}_2$ is defined as the quotient of $\cC_{\infty}$ by the product group $\PP\SL(2, \C) \times S_6$, it follows that $\pi_{\mathcal{M}_2}(\tilde\phi(\tilde\chi)) \supset \pi_{\mathcal{M}_2}(\tilde\psi(\tilde\chi_0))$ and by the commutativity of \eqref{CD} we know that $\pi_{\mathcal{M}_2}(\tilde\phi(\tilde\chi))= \phi(\chi)$.  
\end{proof}

\begin{proposition}
\label{pullback}
We compute that the pull back $\tilde\psi^*(\omega^{\pkp})$ is projectively equivalent to the $\End(S^k(F_2))$ valued 1-form 
\begin{equation}
\label{psipullback}
\omega^{\pkp}_{\tilde\chi} := {\kbar} \sum_{1 \leq i \leq 5} {A_i \,dt \over t - \zeta^{i}} 
\end{equation}
where $A_i = \widehat{\Omega}^{a,b} + \widehat{\Omega}^{c,d}$ for $1 \leq a<b,c<d \leq 5$ such that $[a+b]=[c+d] = [i]$ and $[x] := x \pmod{5}$. 
\end{proposition}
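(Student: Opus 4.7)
The plan is to pull back the explicit expression
$$\omega^{\pkp} \;=\; \kbar \sum_{1 \leq i<j \leq 6} \widehat{\Omega}^{i,j}_k \; d\log(z_i - z_j)$$
by $\tilde\psi$ and then regroup the result by pole location. Set $w_a := \zeta^a + \zeta^{-a}t$ for $a \in \{1,\ldots,5\}$ (reading $\zeta^5=1$, so $w_5 = 1+t$), so that $\tilde\psi^*z_a = 1/w_a$ for $a \leq 5$ and $\tilde\psi^*z_6 = 0$. The elementary but crucial factorizations
$$w_j - w_i \;=\; (\zeta^{-j} - \zeta^{-i})(t - \zeta^{i+j}), \qquad w_a \;=\; \zeta^{-a}(\zeta^{2a} + t),$$
together with the fact that $\tfrac{d(z_i-z_j)}{z_i-z_j} = d\log(z_i - z_j)$, give
$$\tilde\psi^*\!\bigl(d\log(z_i - z_j)\bigr) \;=\; \frac{dt}{t - \zeta^{i+j}} \;-\; \frac{dt}{\zeta^{2i}+t} \;-\; \frac{dt}{\zeta^{2j}+t} \qquad (1 \leq i<j \leq 5),$$
while the $j=6$ terms contribute $\tilde\psi^*(d\log z_i) = -\tfrac{dt}{\zeta^{2i}+t}$.

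I would then split the pulled-back $1$-form into two pieces: the \emph{good} simple poles at the fifth roots of unity $\zeta^r$, and the \emph{spurious} simple poles at the tenth (but not fifth) roots of unity $-\zeta^{2i}$ (these lie outside $\widetilde\chi_0 = \C \smallsetminus \mu_{10}$, so at least the spurious poles live in the complement of the domain). For each residue $r \in \{1,\ldots,5\}$, a direct enumeration shows that exactly two unordered pairs $\{a,b\} \subset \{1,\ldots,5\}$ satisfy $a+b \equiv r \pmod 5$, so the coefficient of $dt/(t-\zeta^r)$ in $\tilde\psi^*(\omega^{\pkp})$ is precisely the operator $A_i$ of the statement. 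Hence the good-pole part already equals $\omega^{\pkp}_{\tilde\chi}$.

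For the spurious part, fix $i \in \{1,\ldots,5\}$; the coefficient of $-dt/(\zeta^{2i}+t)$ collects the operators $\widehat{\Omega}^{i,j}_k$ for all $j \in \{1,\ldots,6\}\setminus\{i\}$ (from the pairs $(i,j)$ with $j \leq 5$ and $j \neq i$, together with the pair $(i,6)$). By identity $(2)$ of Proposition \ref{ward}, applied with $i$ fixed and using the symmetry $\widehat{\Omega}^{i,j}_k = \widehat{\Omega}^{j,i}_k$, this sum equals $\kbar k^2 \Id^{\pkp}$, a scalar multiple of the identity. Consequently the spurious part of $\tilde\psi^*(\omega^{\pkp})$ is
$$-\,\kbar \cdot \kbar k^2 \,\Id^{\pkp} \sum_{i=1}^{5} \frac{dt}{\zeta^{2i}+t},$$
a scalar holomorphic $1$-form times $\Id^{\pkp}$. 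This vanishes on passage to the projectivization, yielding the projective equivalence $\tilde\psi^*(\omega^{\pkp}) \sim \omega^{\pkp}_{\tilde\chi}$. The combinatorial enumeration of the two pairs per residue class is routine bookkeeping; the genuinely structural point — and what I expect to be the main obstacle to verify cleanly — is the cancellation of the spurious poles, which is precisely the content of the Ward identity and reflects the $\PP\SL(2,\C)$-invariance guaranteed by Proposition \ref{pfcc}.
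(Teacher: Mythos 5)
Your proposal is correct and follows essentially the same route as the paper's proof: pull back $d\log(z_i-z_j)$ via the factorizations $w_j - w_i = (\zeta^{-j}-\zeta^{-i})(t-\zeta^{i+j})$ and $w_a = \zeta^{-a}(\zeta^{2a}+t)$, observe that exactly two pairs $\{a,b\}\subset\{1,\dots,5\}$ realize each residue $i+j \bmod 5$ (giving the operators $A_i$), and kill the spurious poles at $-\zeta^{2a}$ by the Ward identity, which makes that part a scalar multiple of $\Id^{\pkp}$ and hence projectively trivial. The only cosmetic difference is that you invoke identity (2) of Proposition \ref{ward} directly where the paper cites identity (3), but the paper itself notes that (3) follows from (2), so the underlying computation is identical.
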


\begin{remark}
 For any $1 \leq i \leq 5$, there exists only one solution for $A_i$ with the given constraints. From the expression it is obvious that the 1-form \eqref{psipullback} has no poles at $-\mu_5$ and extends as a holomorphic 1-form to $\tilde\chi$, which justifies the subscript in \eqref{psipullback}.     
\end{remark}

\begin{proof}
Let $D^0_6 \subset \cC_{\infty}$ be 
\begin{equation}
D^0_6:=\{ (z_1, \dots, z_6) \in \cC | z_6 = 0\}.   
\end{equation}
Then $\tilde\psi(\tilde\chi_0) \subset D^0_6$ and 
\begin{align*}
\omega^{\pkp}_{|D_6^0} &= {\kbar}\bigg( \sum_{1 \leq i < j \leq 5} \widehat{\Omega}^{i,j} {dz_i - dz_j \over z_i - z_j} + \sum_{i=1}^5 \widehat{\Omega}^{i,6}{dz_i \over z_i}\bigg)\\
&=  {\kbar}\bigg( \sum_{1 \leq i < j \leq 5} \widehat{\Omega}^{i,j} d \log (z_i - z_j) + \sum_{i=1}^5 \widehat{\Omega}^{i,6}d \log (z_i)\bigg).
\end{align*} 
Thus 
\begin{align*}
\tilde\psi^*(\omega^{\pkp}) =    {\kbar}\bigg( \sum_{1 \leq i < j \leq 5} \widehat{\Omega}^{i,j} d \log &\bigg({1 \over \zeta^i + \zeta^{-i}t} - {1 \over \zeta^j + \zeta^{-j}t}\bigg)\\
 &- \sum_{i=1}^5 \widehat{\Omega}^{i,6}d \log (\zeta^i + \zeta^{-i}t)\bigg).
\end{align*}
Now 
\begin{align*}
d \log \bigg({1 \over \zeta^i + \zeta^{-i}t} - {1 \over \zeta^j + \zeta^{-j}t}\bigg) = &d \log(\zeta^i + \zeta^{-i}t - \zeta^j - \zeta^{-j}t) \\
&- d\log(\zeta^i + \zeta^{-i}t) - d\log(\zeta^j + \zeta^{-j})
\end{align*} 
which implies that 
\begin{align*}
&\tilde\psi^*(\omega^{\pkp}) ={\kbar} \bigg( \sum_{1 \leq i < j \leq 5} \widehat{\Omega}^{i,j}d \log(\zeta^i + \zeta^{-i}t - \zeta^j - \zeta^{-j}t)
\\
 &- \sum_{1 \leq i < j \leq 5} \widehat{\Omega}^{i,j}d\log(\zeta^i + \zeta^{-i}t)
- \sum_{1 \leq i < j \leq 5} \widehat{\Omega}^{i,j} d\log(\zeta^j + \zeta^{-j}t)) -  \sum_{ i=1}^5  \widehat{\Omega}^{i,6} d \log(\zeta^i + \zeta^{-i}t)\bigg).
\end{align*}
The last three terms can be simplified by 
\begin{equation}
 \sum_{1 \leq i < j \leq 5} \widehat{\Omega}^{i,j}d\log(\zeta^i + \zeta^{-i}t)
+\sum_{1 \leq i < j \leq 5} \widehat{\Omega}^{i,j} d\log(\zeta^j + \zeta^{-j}t)) + \sum_{ i=1}^5  \widehat{\Omega}^{i,6} d \log(\zeta^i + \zeta^{-i}t)= 
\end{equation}
\begin{equation}
\sum_{1 \leq i < j \leq 6} \widehat{\Omega}^{i,j}d\log(\zeta^i + \zeta^{-i}t)
+\sum_{1 \leq i < j \leq 6} \widehat{\Omega}^{i,j} d\log(\zeta^j - \zeta^{-j}t)) = -k^2 \Id \sum_{i=5}^6 d \log(\zeta^i + \zeta^{-i}t)
\end{equation}
where the last equality follows from (3) of propsition \ref{ward}. This gives us that 
\begin{equation}
\tilde\psi^*(\omega^{\pkp}) ={\kbar} \bigg( \sum_{1 \leq i < j \leq 5} \widehat{\Omega}^{i,j}d \log(\zeta^i + \zeta^{-i}t - \zeta^j - \zeta^{-j}t) + k^2 \Id \sum_{i=5}^6 d \log(\zeta^i + \zeta^{-i}t)\bigg)
\end{equation} 
Now, 
\begin{align*}
d \log(\zeta^i + \zeta^{-i}t - \zeta^j - \zeta^{-j}t)&={(\zeta^{-i} - \zeta^{-j}) dt \over (\zeta^{i} - \zeta^{j}) + (\zeta^{-i} - \zeta^{-j})t}
\\
&={dt \over {\zeta^{i} - \zeta^{j} \over \zeta^{-i} - \zeta^{-j}}+t} .
\end{align*}
This can be simplified by 
\begin{equation}
-{\zeta^{i} - \zeta^{j} \over \zeta^{-i} - \zeta^{-j}} = {\zeta^{i} - \zeta^{j} \over \zeta^{-j} - \zeta^{-i}} {.} {\zeta^ {i + j} \over \zeta^ {i + j}} = \zeta^{i +j},
\end{equation}
the result being
\begin{equation}
d \log(\zeta^i + \zeta^{-i}t - \zeta^j - \zeta^{-j}t)= {dt \over t - \zeta^{i+j}}.
\end{equation}
Notice that 
\begin{equation}
\zeta^{i+j} = \zeta^{k+l}
\end{equation}
if $i+j=k+l  (mod 5)$.  
Similarly, 
\begin{align*}
d \log(\zeta^i + \zeta^{-i}t) &={ \zeta^{-i} dt \over \zeta^{i} + \zeta^{-i}t}\\
&= {dt \over t + \zeta^{2i}}.
\end{align*}
This implies that 
\begin{equation}
\label{89}
\tilde\psi^*(\omega^{\pkp}) = {\kbar} \bigg(\sum_{1 \leq i \leq 5} {A_i \,dt \over t - \zeta^{i}} + k^2 \Id \sum_{1\leq i \leq 5} {dt \over t + \zeta^{2i} }   \bigg)
\end{equation}
which is projectively equivalent to \eqref{psipullback}. 
\end{proof}
We define a connection in the trivial bundle $H^0(\C\PP^3, \mathcal{O}_{\C\PP^3}(k)) \times \tilde\chi$ as follows
\begin{equation}
\label{connectiontildechi}
\bnabla^{\pkp}_{\tilde\chi} := d + \omega^{\pkp}_{\tilde\chi}. 
\end{equation}
 The connection \eqref{connectiontildechi} can be considered as a meromorphic connection on $\C$ with logarithmic singularities at $\mu_5$. The residue at each singularity $\zeta^i \in \mu_5$ is $A_i$. See \cite{De} for an excellent introduction to meromorphic connections with logarithmic singularities.  

We give an explicit expression of the parallel transport of \eqref{connectiontildechi} along paths in $\tilde\chi$ in terms of iterated integrals of 1-forms on $\tilde\chi$. The  1-forms ${dt \over t - \zeta^i}$, where $\zeta^i \in \mu_5$, give a basis for $H^1_{dR}(\tilde\chi, \C)$.  Let $\gamma \colon [a,b] \to \tilde\chi$ be a smooth map and for each $\zeta^i$ let 
\begin{equation}
f_i(s_i) ds_i:=\gamma^*\big({dt \over t - \zeta^i}\big).
\end{equation}
The ordinary line integral of this 1-form is given by 
\begin{equation}
\int_{\gamma} {dt \over t - \zeta^i} = \int_a^b f_i(s_i)ds_i
\end{equation}
which is independent of the parametrization of $\gamma$. Now choose some r-tuple $(\zeta^{i_1}, \dots, \zeta^{i_r}) \in (\mu_5)^r$ and consider the iterated integral
\begin{equation}
\label{hyperlogii}
L_a(\zeta^{i_1}, \dots, \zeta^{i_r}|b): = \int_a^b f_{i_r}(s_{i_r}) \bigg(\int_a^{s_{i_r}} f_{i_{r-1}}(s_{i_{r-1}}) \dots \bigg(\int_a^{s_{i_2}} f_{i_1}(s_{i_1})  ds_{i_1}\bigg)\dots ds_{i_{r-1}}\bigg) ds_{i_{r}}
\end{equation}
where $$f_{i_r}(s_{i_r}) ds_{i_r}=\gamma^*\big({dt \over t - \zeta^{i_r}}\big).$$
This integral is independent of the choice of the parametrization of $\gamma$ and depends on the homotopy class of $\gamma$. Such integrals, under the name of Hyperlogarithms,  were introduced in \cite{Poi} and extensively studied in \cite{LD}. 

The iterated integrals  $L_0(\zeta^{i_1}, \dots, \zeta^{i_r}| 1)$ appear in the study of the motivic fundamental group of $\C^*- \mu_n$ (when specialized to n=5) carried out in \scomment{(section 5 of)} \cite{DG}. These iterated integrals also appear in the construction of the cyclotomic Drinfel'd associator carried out in \cite{enr}.

Let us specialize to the path $[0, 1-\epsilon] \subset \tilde\chi$, where $\epsilon$ is an arbitrarily small positive real number. In this case, for any $(\zeta^{i_1}, \dots, \zeta^{i_r}) \in (\mu_5)^r$,  \eqref{hyperlogii} is 
\begin{equation}
\label{30}
L_0(\zeta^{i_1}, \dots, \zeta^{i_r}|1-\epsilon)= \int_0^{1-\epsilon} {1\over s_{i_r} - \zeta^{i_r}} \bigg(\int_0^{s_{i_r}} {1\over s_{i_{r-1}} - \zeta^{{i}_{r-1}}} \dots \bigg(\int_0^{s_{i_2}} {1\over s_{i_1} - \zeta^{{i}_1}} ds_{i_1}\bigg)\dots ds_{i_{r-1}}\bigg) ds_{i_{r}}.
\end{equation}
Define $\Phi^{\pkp}_{\epsilon} \colon [0, 1 - \epsilon] \to  \End(H^0(\C\PP^3, \mathcal{O}_{\C\PP^3}\pkp))$ by   
\begin{equation*}
\Phi^{\pkp}_{\epsilon}(x) = \Id  +\sum_{r=1}^{\infty}{\kbar}^r             \,\,\,\sum_{\overset{\zeta^{i_1}, \dots, \zeta^{i_r} \in (\mu_5)^{r}}{}}  L_0(\zeta^{i_1}, \dots, \zeta^{i_r}| x)A_{i_1}\dots A_{i_r} \quad \quad\quad  x \in [0, 1 - \epsilon],
\end{equation*}
where we consider $L_0(\zeta^{i_1}, \dots, \zeta^{i_r}| x)$ as a function in one variable: the upper limit of integration. 
One easily proves that, see \cite{Kass} page 481 or \cite{LD} page 164,  that $\Phi^{\pkp}(x)$ is convergent for all $x \in [0, 1-\epsilon] $ and 
\begin{equation}
\label{9101}
{d\,\Phi^{\pkp}_{\epsilon}\over dx} = {\kbar} \sum_{1 \leq i \leq 5} {\Phi^{\pkp}_{\epsilon}\,\,A_i \over x - \zeta^{i}}.
\end{equation}
Since the connection \eqref{connectiontildechi} is given by the 1-form \eqref{psipullback}, \eqref{9101} implies that $\Phi^{\pkp}_{\epsilon}$ gives the parallel transport of the connection \eqref{connectiontildechi} along the path $[0, 1 - \epsilon]$. 

The integral \eqref{30} converges as $\epsilon \to 0$ if and only if $\zeta^{i_r} \neq 1$, thus the integral $L_0(\zeta^{i_1}, \dots, \zeta^{i_r}| 1)$ is well defined if and only if $\zeta^{i_r} \neq 1$.  Let
\begin{equation}
\label{911}
\Phi^{\pkp} := \Id  +\sum_{r=1}^{\infty}{\kbar}^r \,\,\,\sum_{\overset{\zeta^{i_1}, \dots, \zeta^{i_r} \in (\mu_5)^{r}}{\ \zeta^{i_r}\neq1}}  L_0(\zeta^{i_1}, \dots, \zeta^{i_r}| 1)A_{i_1}\dots A_{i_r} \,\,\,\, \in \End(H^0(\C\PP^3, \mathcal{O}_{\C\PP^3}\pkp)).
\end{equation}
The endomorphism \eqref{911} will play a crucial role in the monodromy computation carried out in the next section. 

We now give a result on the asymptotic behavior of the parallel transport of \eqref{connectiontildechi} in arbitrarily small neighborhoods of the singularities $\mu_5$. Let $D^*_i:=\{ b_i \in \C | 0 < |b_i| < \epsilon\}$, and consider the embedding 
\begin{equation}
\label{discs}
B_i \colon D^*_{i} \to \tilde\chi, \quad\quad\quad b_i \mapsto \zeta^i + b_i .
\end{equation}
 An easy calculation shows 
\begin{equation}
\label{87}
B^*_i (\omega^{\pkp}_{\tilde\chi}) = \kbar \bigg( {A_i \over b_i} + \sum_{\overset {1 \leq j \leq 5}{\ i \neq j}} {A_j\,\,\, \over b_i - (\zeta^j - \zeta^i)}\bigg) db_i.
\end{equation}
Let $Y_i(b_i)$ be an $\End(H^0(\C\PP^3, \mathcal{O}_{\C\PP^3}(k))) $ valued function over $D^*_i$. Then $Y_i$ is a solution of \eqref{87} if 
\begin{equation}
\label{localequation}
Y_i' = \kbar \bigg( {A_i \over b_i} + \sum_{\overset {1 \leq j \leq 5}{\ i \neq j}} {A_j\,\,\, \over b_i - (\zeta^j - \zeta^i)}\bigg) Y_i.
\end{equation}  
\begin{proposition}
\label{localsolutionproposition}
For all $1 \leq i \leq 5$, there exists a unique $Y_i$ satisfying \eqref{localequation}  such that 
\begin{equation}
\label{localsolution}
Y_i (b_i) = Q_i(b_i) \,b_i^{\kbar A_i}
\end{equation}
where $Q_i(b_i) = \sum_{r\geq 0} q^{\pip}_r b_i^r$, with $q_r^{\pip} \in \End(H^0(\C\PP^3, \mathcal{O}_{\C\PP^3}(k)))$, and $Q_i(0)=q_0^{\pip} = \Id$.  
\end{proposition}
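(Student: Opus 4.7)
The plan is to apply the classical Frobenius method for a Fuchsian system at a regular singular point. Decompose the connection matrix in \eqref{localequation} as $\kbar\bigl(A_i/b_i + R(b_i)\bigr)$, where $R(b_i) := \sum_{j \neq i} A_j/\bigl(b_i - (\zeta^j - \zeta^i)\bigr)$ is holomorphic on the disc of radius $\min_{j \neq i}|\zeta^j - \zeta^i|$ about $b_i = 0$; expand $R(b_i) = \sum_{s \geq 0} R_s\, b_i^s$ as a Taylor series.

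\textbf{Reducing to a recurrence.} Substituting the ansatz $Y_i(b_i) = Q_i(b_i)\, b_i^{\kbar A_i}$ into \eqref{localequation} and using the identity $\frac{d}{db_i}\bigl(b_i^{\kbar A_i}\bigr) = (\kbar A_i/b_i)\, b_i^{\kbar A_i}$ (valid because $A_i$ commutes with itself), the equation reduces, after right-multiplication by $b_i^{-\kbar A_i}$, to
\begin{equation*}
b_i\, Q_i'(b_i) = \kbar\, [A_i, Q_i(b_i)] + \kbar\, b_i\, R(b_i)\, Q_i(b_i).
\end{equation*}
Inserting the formal series $Q_i(b_i) = \sum_{r \geq 0} q_r^{\pip} b_i^r$ and matching coefficients of $b_i^r$ yields, for $r = 0$, the relation $[A_i, q_0^{\pip}] = 0$, automatically satisfied by the normalization $q_0^{\pip} = \Id$; and for each $r \geq 1$, the recurrence
\begin{equation*}
\bigl(r\cdot \Id_{\End} - \kbar\, \ad_{A_i}\bigr)\, q_r^{\pip} \;=\; \kbar \sum_{\substack{s + t = r - 1 \\ s,\,t \geq 0}} R_s\, q_t^{\pip},
\end{equation*}
where $\ad_{A_i}$ denotes commutation on $\End(S^k(F_2))$.

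\textbf{Solvability.} This recurrence uniquely determines $q_r^{\pip}$ from $q_0^{\pip},\dots, q_{r-1}^{\pip}$ provided that $r\cdot\Id_{\End} - \kbar\, \ad_{A_i}$ is invertible for every $r \geq 1$. The obstruction is the non-resonance condition that no eigenvalue of $\ad_{A_i}$ equals $r/\kbar = -16r(k+2)$. Since $\End(S^k(F_2))$ is finite-dimensional, the spectrum of $\ad_{A_i}$ is bounded, so non-resonance holds for all sufficiently large $r$ automatically; the finitely many remaining low values of $r$ can be excluded by a direct computation from the explicit description of $A_i$ given in Proposition \ref{pullback}.

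\textbf{Convergence.} A standard majorant argument (see e.g.\ \cite{De}) gives $\|(r\cdot\Id_{\End} - \kbar\, \ad_{A_i})^{-1}\| = O(1/r)$ as $r\to\infty$; combined with the geometric decay of $\|R_s\|$ dictated by the positive radius of convergence of $R$, this yields an exponential bound on $\|q_r^{\pip}\|$, so $Q_i$ has a positive radius of convergence around $b_i = 0$. Then $Y_i(b_i) := Q_i(b_i)\, b_i^{\kbar A_i}$ is the asserted unique solution of \eqref{localequation} on $D_i^*$ (after fixing a branch of $\log b_i$). I expect the main obstacle to be the non-resonance verification, but it is mild: the residues $\kbar A_j$ are of order $1/k$ while the spectral gaps of $\ad_{A_i}$ on the finite-dimensional space $\End(S^k(F_2))$ are generically $O(1)$, so any failure can be detected by finitely many explicit checks.
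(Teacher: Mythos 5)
Your proof follows essentially the same route as the paper's: substitute the ansatz $Y_i(b_i) = Q_i(b_i)\,b_i^{\kbar A_i}$, match powers of $b_i$ to obtain a recurrence solved by inverting $r\,\Id - \kbar\,\ad(A_i)$, and conclude convergence of $Q_i$ from the standard theory of formal solutions at a regular singular point. You are in fact slightly more careful than the paper, which simply writes down the Neumann-series inverse \eqref{23} (convergent only once $r$ exceeds the spectral radius of $\kbar\,\ad(A_i)$) without addressing the finitely many possibly resonant small values of $r$ that you explicitly flag.
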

\begin{remark}
For the expression $b_i^{\kbar A_i}$ to make sense on $D^*_i$, we must choose a branch cut for the logarithm. We make the choice of the positive real axis in $D^*_i$ as the branch cut for the logarithm.   
\end{remark}

\begin{proof}
By the theory of ordinary differential equations, see \cite{Was}, there exists a solution $Y_i$ satisfying \eqref{localequation}. We show that there exits a family $q_r^{\pip}$ satisfying the above stated requirements such that $Y_i$ is of the form \eqref{localsolution}.  We write out 
\begin{equation}
\label{20}
Y_i'(b_i)= \bigg(Q_i'(b_i) + \kbar {Q_i(b_i)A_i \over b_i}\bigg) b_i^{\kbar A_i}=\kbar \bigg({A_i \over b_i} + \sum_{\overset {1 \leq j \leq 5}{\ i \neq j}} {A_j\,\,\, \over b_i - (\zeta^j - \zeta^i)}\bigg) Q_i(b_i) \,b_i^{\kbar A_i}.  
\end{equation}  
This can be rewritten as 
\begin{equation}
\label{21}
b_i Q_i'(b_i) - \kbar [A_i, Q_i(b_i)]= -\kbar  \,\,\sum_{\overset {1 \leq j \leq 5}{\ i \neq j}} A_j {b_i \,Q_i(b_i)  \over (\zeta^j - \zeta^i)- b_i}.
\end{equation}
Expanding the above in powers of $b_i$, we get $[A_i, q_0^{\pip}]=0$, and for $r >0$ 
\begin{equation}
\label{22}
r q^{\pip}_r - \kbar[A_i, q^{\pip}_r] = - \kbar \sum_{l=1}^{r} \bigg(\sum_{\overset {1 \leq j \leq 5}{\ i \neq j}} {A_j\over (\zeta^j - \zeta^i)^l   }\bigg)q^{\pip}_{l-1}
\end{equation}
The above equations, for every $r$, have a solution. Indeed, if we take $q^{\pip}_0=\Id$, then $q^{\pip}_r$ is uniquely determined by $q^{\pip}_0, \dots, q^{\pip}_{r-1}$ due to the fact that the operator $r \Id - \kbar\, \ad(A_i)$ is invertible with inverse equal to
\begin{equation}
\label{23}
{1 \over r} \sum_{n \geq 0} {\kbar^n \over r^n}  \ad (A_i)^n.
\end{equation}    
The convergence of $Q_i(b_i)$ results from the general fact that a formal solution of a regular singular equation is necessarily convergent, see \cite{Kass} and \cite{Was}. 
\end{proof}

\section{Generators of the (orbifold) fundamental group}
\label{liftingpaths}
The curve $\chi$ is defined as the quotient $\tilde\chi_{\infty}/D$, where $D$ was generated by the automorphisms $R, I \colon \tilde\chi_{\infty} \to \tilde\chi_{\infty}$. The rotation $R$ is of order five and has two fixed points $0$ and $\infty$, the involution $I$ has the fixed point $-1$, it follows that $\chi$ is topologically a sphere with two orbifold points $a := \pi_{\tilde\chi}(0)$, $b:= \pi_{\tilde\chi}(-1)$ and one puncture. The stabilizer group of the orbifold point $a$ is $\langle R\rangle$, and the stabilizer group of the orbifold point $b$ is $I$.

 We choose explicit paths based at the order five orbifold point of $\chi$ such that they generate the group $\ofg(\chi, a)$. Let $\tilde\gamma_0 \subset \tilde\chi$ be the path starting from $0$ and running along the real axis to $1-\epsilon$, where $\epsilon$ is an arbitrarily small positive real number. Let $\tilde\gamma_{1}$ be the semi-circle starting from $1- \epsilon$ moving around $1$ in an anti clockwise direction until it reaches $(1 - \epsilon)^{-1}$. We will also need $\tilde\gamma_0^{-1}$ which is the path running from $1-\epsilon$ to $0$ along the real axis. 

The end points of the semi-circle $\tilde\gamma_1$ are identified with each other under the involution $I$, which implies that the projection $$\pi_{\tilde\chi}(\tilde\gamma_1)=:\gamma_1 \subset \chi$$ is a homotopically non-trivial closed loop. Consider the projection of the path $\tilde{\gamma}_0$,  $$\pi_{\tilde\chi}(\tilde\gamma_0)=: \gamma_0 \subset \chi$$ then the composition
\begin{equation}
\label{gamma}
\gamma_0^{-1}\cdot\gamma_1\cdot\gamma_0=: \gamma \subset \chi
\end{equation}
 is a closed, connected, homotopically non-trivial loop with the property that it starts and ends at the order five orbifold point $a \in \chi$.

The following is part of proposition 4.11 from \cite{Lo}. 
\begin{proposition}
\label{generators}
The loop $\gamma$ and the orbifold stabilizer group $\langle R \rangle$ together generate $\ofg(\chi, a)$, and under the isomorphism $\ofg(\chi, a) \cong \triangle(2, 5, \infty)$
R is identified with $U:=S\circ T$ and $\gamma$ is identified with $T$. 
\end{proposition}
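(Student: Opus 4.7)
The plan is to exploit the fact that $\chi$ is a very simple orbifold (a $2$-sphere with two cone points and one puncture) and apply the classical presentation of the orbifold fundamental group of such a surface, then identify each generator geometrically.

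\textbf{Step 1 (orbifold structure).} First I would determine the orbifold type of $\chi = \tilde\chi/D$ by a fixed-point analysis of the order-$10$ dihedral action. Using $R(t) = \zeta^2 t$ and $I(t) = 1/t$, the rotations $R^k$ fix only $\{0,\infty\}$ with full stabilizer $\langle R\rangle$ of order $5$, and the reflections $R^k I$ fix the ten points $\pm\mu_5$, of which only $-\mu_5 \subset \tilde\chi$ contributes and forms a single $D$-orbit with stabilizer of order $2$. The removed set $\mu_5$ is likewise a single $D$-orbit and descends to the single puncture. This recovers the description already stated in the text: $\chi$ is $S^2$ with cone points of orders $5$ and $2$ at $a$ and $b$ and one puncture.

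\textbf{Step 2 (standard presentation).} For a $2$-orbifold of this type the classical presentation of the orbifold fundamental group, chosen with basepoint at $a$ and with a suitable system of disjoint arcs from $a$ to $b$ and from $a$ to the puncture, reads
\begin{equation*}
\ofg(\chi, a) \;\cong\; \langle\, r,\, s,\, p \ \mid\ r^5 = 1,\ s^2 = 1,\ r\,s\,p = 1 \,\rangle,
\end{equation*}
where $r$ is the stabilizer generator at $a$, $s$ the conjugate of the stabilizer at $b$ along the chosen arc, and $p$ a loop around the puncture. Eliminating $s = (rp)^{-1}$ via the boundary relation yields the two-generator presentation $\langle r,p \mid r^5 = 1,\ (rp)^2 = 1\rangle$, which is manifestly isomorphic to $\triangle(2,5,\infty) = \langle S,T \mid S^2 = (ST)^5 = 1\rangle$ under the assignment $ST \leftrightarrow r$ and $T \leftrightarrow p$ (so that $S$ is identified with $rp^{-1}$, a conjugate of $s$ of order $2$).

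\textbf{Step 3 (identifying $R$ and $\gamma$).} The identification $R \leftrightarrow r$ is immediate: $R$ generates the local stabilizer of the cone point $a$. For $\gamma$, the key geometric point is that the two endpoints $1-\epsilon$ and $(1-\epsilon)^{-1}$ of the semicircle $\tilde\gamma_1$ are interchanged by $I$, which is precisely the $D$-stabilizer of the point $1 \in \mu_5$. Hence the projection $\pi_{\tilde\chi}(\tilde\gamma_1) = \gamma_1 \subset \chi$ is a simple closed curve encircling the puncture exactly once, and conjugating by the arc $\gamma_0$ from $a$ to a neighborhood of the puncture transports the basepoint to $a$. Thus $\gamma = \gamma_0^{-1} \gamma_1 \gamma_0$ represents the puncture generator $p$, i.e.\ $T \in \triangle(2,5,\infty)$, and we conclude $R \leftrightarrow ST = U$ and $\gamma \leftrightarrow T$.

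The main obstacle is the orientation bookkeeping in Step~3: one has to verify that the anti-clockwise half-circle $\tilde\gamma_1$ projects to the puncture loop in the correct orientation to match $T$ (rather than $T^{-1}$), and that the boundary relation $rsp = 1$ is written with the conventions used for the arcs from $a$. Once this is pinned down the argument reduces to verifying that the local winding of $\gamma_1$ around the puncture is $+1$, which follows because the quotient map near $1$ is the squaring map $w \mapsto w^2$ under the involution $I$, sending a $\pi$-arc in $\tilde\chi$ to a full $2\pi$-loop in $\chi$.
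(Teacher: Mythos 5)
The paper does not actually prove this proposition; it is quoted verbatim as ``part of proposition 4.11 from \cite{Lo}.'' Your proposal therefore takes a genuinely different (and more self-contained) route: a fixed-point analysis of the dihedral action to get the orbifold type of $\chi$, the standard presentation of the orbifold fundamental group of a sphere with cone points of orders $5$ and $2$ and one puncture, and a geometric identification of $R$ with the order-$5$ stabilizer generator and of $\gamma$ with a puncture loop. Step~1 is correct, and the generation claim in Step~3 is sound: $\tilde\gamma_0=[0,1-\epsilon]$ projects to an embedded arc from $a$ to a neighbourhood of the puncture avoiding $b$, and $\tilde\gamma_1$ projects to a simple loop winding once around the puncture (your local model $w\mapsto w^2$ for the quotient near $1$ is right), so $\gamma$ is a standard puncture generator and $\langle R,\gamma\rangle=\ofg(\chi,a)$.

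There is, however, a concrete inconsistency in Step~2 which is exactly the part of the statement that carries content beyond generation. From the boundary relation $rsp=1$ and $s^2=1$ you derive the presentation $\langle r,p\mid r^5=(rp)^2=1\rangle$, and then assert that $ST\mapsto r$, $T\mapsto p$ gives an isomorphism with $\triangle(2,5,\infty)$, so that $S\mapsto rp^{-1}$. But in $\langle r,p\mid r^5=(rp)^2=1\rangle\cong \Z_5 * \Z_2$ (with $q:=rp$ the order-two free factor and $p=r^{-1}q$) one has $rp^{-1}=rqr$, and $(rqr)^2=rqr^2qr$ is a reduced word in the free product, hence \emph{not} the identity. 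With the relation as you derived it, the consistent identification is $S\mapsto rp$, $T\mapsto p^{-1}$, i.e.\ $\gamma$ would correspond to $T^{-1}$, not $T$. To prove the proposition as stated you must show that the geometrically correct product relation, with the chosen orientations of $R$ (anticlockwise rotation by $4\pi/5$ near $0$), of $\tilde\gamma_1$ (anticlockwise), and of the cyclic order of the arcs to $b$ and to the puncture around the basepoint, is $(rp^{-1})^2=1$ rather than $(rp)^2=1$ --- equivalently, that $R\gamma^{-1}$ (and not $R\gamma$) is conjugate to the order-two stabilizer at $b$. You flag this as ``orientation bookkeeping,'' but it is not removable by relabelling: it is precisely the assertion being proved, and the sign propagates into the formula for $\rho_2^{\pkp}(T)$ in Theorem~\ref{maintheorem}. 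The outline is salvageable, but this verification (e.g.\ via the explicit invariant coordinate $u=t^5+t^{-5}$, tracking where the arcs to $b=-2$ and to the puncture at $2$ sit relative to the rotation at $a=\infty$) must be carried out before the proof is complete.
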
     
 Let 
\begin{displaymath}
H : \ofg(\chi, a) \to H_1(\phi(a), \ZZ)\cong \Sp(4, \ZZ)
\end{displaymath}
be the monodromy representation provided by the Gauss-Manin connection in the bundle of the first de Rahm cohomology groups of the genus two surfaces parametrized by $\chi$.    
\begin{lemma}
\label{m0m1}
The elements $\langle R\rangle$ and $\gamma$ map to $M_0$ and $M_1$ respectively where 
\begin{equation}
M_{0}:=\begin{bmatrix}
0 & 0 & -1 & 1\\
0& 0 & 0 & -1\\
1 & 0 & -1 & 0\\
1 & 1 & -1 & 0
\end{bmatrix}  \quad M_{1}:=\begin{bmatrix}
0 & 1& 0 & 0\\
1&0 & 0 & 0\\
1 & 1 & 0 & -1\\
1& 1 & -1 & 0
\end{bmatrix}.
\end{equation}
under the homomorphism $H$. 
\end{lemma}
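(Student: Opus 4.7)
The strategy is to compute both matrices directly, working with the central fiber $F_0 \colon y^2 = x^5 - 1$ over the basepoint $a = \pi_{\tilde\chi}(0)$ and an explicit symplectic basis of $H_1(F_0,\ZZ)$. First I would fix such a basis $\{\alpha_1, \alpha_2, \beta_1, \beta_2\}$ built from the branch points $p_i = \zeta^i$ ($i=1,\ldots,5$) together with $p_6 = \infty$: take each $\alpha_j$ to be the lift to $F_0$ of a simple loop in $\C\PP^1$ encircling an adjacent pair $\{p_j, p_{j+1}\}$, and take $\beta_j$ to be the dual transverse cycles, choosing orientations so that the intersection form is the standard symplectic form on $\ZZ^4$. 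A compatible choice is implicit in \cite{Lo}, and lining the basis up with that convention is what will produce the stated entries of $M_0$ and $M_1$.

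For $H(\langle R\rangle) = M_0$: the automorphism $R\colon t\mapsto \zeta^2 t$ fixes $t=0$ and, by the computation of $\tilde\phi(R(t))$ in Lemma \ref{RI}, lifts to a biholomorphism of $F_0$ given on affine coordinates by $(x,y)\mapsto(\zeta^{-1}x,\pm\zeta^{-5/2}y)$ for an appropriate branch of the square root (fixed so that the lift varies continuously with $t$). This automorphism permutes the branch points by $p_i\mapsto p_{i-1}$, and the induced action on $H_1(F_0,\ZZ)$ is determined by how the chosen cycles $\alpha_j,\beta_j$ are permuted (with signs picked up whenever a cycle must cross the hyperelliptic cut). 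Writing out the resulting permutation matrix in the basis $(\alpha_1,\alpha_2,\beta_1,\beta_2)$ yields $M_0$.

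For $H(\gamma) = M_1$: the key observation is that as $t$ runs along $\gamma_0$ from $0$ to $1-\epsilon$ the family has no singular fibers, while at $t=1$ the two branch points $\zeta+\zeta^{-1}t$ and $\zeta^4+\zeta^{-4}t$ collide (both limit to $\zeta+\zeta^{-1}$), giving a single nodal degeneration encircled by $\gamma_1$. The Picard--Lefschetz formula then says
\begin{equation*}
H(\gamma_1)(c) = c + \langle c,\delta\rangle\,\delta
\end{equation*}
where $\delta\in H_1(F_{1-\epsilon},\ZZ)$ is the vanishing cycle, i.e.\ the lift to $F_{1-\epsilon}$ of the real segment joining the two colliding branch points. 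Since $\gamma = \gamma_0^{-1}\cdot\gamma_1\cdot\gamma_0$, the matrix $H(\gamma)$ is obtained by expressing $\delta$ in the basis $\{\alpha_j,\beta_j\}$ at $t=0$, transported along $\gamma_0$; because $\gamma_0$ lies in a simply connected subset of $\tilde\chi$, this is a canonical identification and amounts to tracing the segment $[\zeta+\zeta^{-1}t,\ \zeta^{-1}+\zeta t]$ backwards along the real axis to $t=0$. Plugging the resulting expression for $\delta$ into the Picard--Lefschetz formula produces the symplectic transvection $M_1$.

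The main obstacle is bookkeeping: the identification of $H_1$ with $\ZZ^4$ through the particular symplectic basis $(\alpha_1,\alpha_2,\beta_1,\beta_2)$ must be compatible with the one implicit in \cite{Lo} so that the stated entries of $M_0$ and $M_1$ come out as written, and signs coming from the hyperelliptic double cover have to be tracked carefully when permuting branch points under $R$ and when lifting the vanishing segment to $\delta$. Apart from this sign-keeping, both computations reduce to elementary linear algebra on the four-dimensional symplectic lattice, and the stated matrices $M_0$ and $M_1$ (which indeed lie in $\Sp(4,\ZZ)$, as a direct check confirms) are forced by the geometry above.
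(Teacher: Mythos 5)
Your route is genuinely different from the paper's: the paper does not compute anything here, but instead invokes McMullen's embedding $\chi \hookrightarrow X_{\sqrt{5}}$ of the Veech curve into the Hilbert modular surface \cite{Mc} and reads off the images of $R$ and $\gamma$ from the explicit description of the Gauss--Manin monodromy of $X_{\sqrt{5}}$ given in \cite{CMZ}. A self-contained computation of the kind you propose would be a real improvement, and your treatment of $M_0$ is plausible as far as it goes: $(x,y)\mapsto(\zeta^{-1}x,y)$ is indeed an automorphism of $y^2=x^5-1$, and the stated $M_0$ has trace $-1$ and order $5$, as the homology action of an order-five automorphism with irreducible characteristic polynomial must. (You still owe the reader the continuity argument pinning down the lift up to the hyperelliptic involution, i.e.\ up to $-\Id$ in $\Sp(4,\ZZ)$, but that is the bookkeeping you already flagged.)

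The derivation of $M_1$, however, contains a genuine error. First, the fiber does not acquire ``a single nodal degeneration'' at $t=1$: branch points $i$ and $j$ collide exactly when $t=\zeta^{i+j}$, so at $t=1$ \emph{both} pairs $\{p_1,p_4\}$ and $\{p_2,p_3\}$ collide, producing two nodes and two vanishing cycles --- this is precisely why the residue $A_1=\widehat{\Omega}^{1,4}+\widehat{\Omega}^{2,3}$ in Proposition \ref{pullback} is a sum of two terms, and why the paper describes $\phi(\gamma)$ as a Dehn twist along \emph{two} non-separating curves. Second, and more fundamentally, $\gamma_1$ is only a semicircle in $\tilde\chi$ whose endpoints are identified by the involution $I$; the class $T=\gamma$ is the primitive cusp loop downstairs in $\chi$, but it lifts to a half-loop around $t=1$, so $H(\gamma)$ is half of the local monodromy composed with the homological action of the lift of $I$ --- it is not computed by Picard--Lefschetz around a full loop. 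The stated matrix makes this unmistakable: $M_1-\Id$ has rank $2$ and $M_1$ has eigenvalue $-1$ (in fact $M_1^2=\Id$), whereas any single transvection $c\mapsto c+\langle c,\delta\rangle\,\delta$, and indeed any product of transvections along the disjoint vanishing cycles of a nodal degeneration, is unipotent. No choice of $\delta$ can make your formula output $M_1$; the involution factor must be built into the computation from the start.
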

\begin{proof}
Let $X_{\sqrt{5}}$ be the Hilbert modular surface for totally real quadratic field $\QQ(\sqrt{5})$ considered in \cite{CMZ}. The surface $X_{\sqrt{5}}$ is the moduli space of abelian surfaces which admit real multiplication by the field $\QQ(\sqrt{5})$. In \cite{CMZ} the image of Gauss-Mannin connection  $H': \pi^{orb}_1(X_{\sqrt{5}}, *) \to H_1( A_*, \ZZ)\cong \Sp(4, \ZZ)$ is described explicitly. In \cite{Mc} it is shown that the Teichm\"uller curve $\chi \hookrightarrow X_{\sqrt{5}}$ embeds in the hilbert modular surface; In particular an embedding $\pi_1^{orb}(\chi, a) \hookrightarrow \pi_1^{orb}(X_{\sqrt{5}}, *)$ is provided. We simply look at the image of $<R>$ and $\gamma$  under the composition of this embedding with $H'$. 
\end{proof}

Let us choose $\phi(a) \in \mathcal{M}_2$ as the base point for the orbifold fundamental group of $\mathcal{M}_2$. Then $\phi(\gamma) \in \ofg(\mathcal{M}_2, \phi(a))$. In fact, see \cite{Veech} or \cite{Lo}, the loop   $\phi(\gamma)$ corresponds to a Dehn twist along two non-separating simple closed loops on a genus two surface, and $R$ corresponds to an order five diffeomorphism.
 For each integer $k >0$ we have the projective vector bundle with the flat connection. The monodromy of this flat connection provides us with a group homomorphism
\begin{equation}
\label{monodromyrep}
\rho^{\pkp} \colon \ofg(\mathcal{M}_2, \phi(a)) \to \End(\PP\overline{\mathbb{V}}^{\pkp}_{\phi(a)}) 
\end{equation}
where the fiber $ \End(\PP\overline{\mathbb{V}}^{\pkp}_{\phi(a)})$ can be identified with $\End(\PP H^0(\C\PP^3, \mathcal{O}_{\C\PP^3}(k)))$. In the next section, we compute the image of $\phi(\gamma)$ and $R$ under the morphism \eqref{monodromyrep}, and prove theorem \ref{maintheorem}.

\subsection{Computing Monodromy}
In order to compute the image of $\phi(\gamma) $ and $R$ under \eqref{monodromyrep} we need two connected paths, $\tilde\gamma$ and $p_R$, based at some point $\bf{z}\in \cC$ such that $\pi_{\mathcal{M}_2}($$\bf{z}$$)=\phi(a)$ and $\pi_{\mathcal{M}_2}(\tilde\gamma)=\phi(\gamma)$, $\pi_{\mathcal{M}_2}(p_R)=\phi(a)$.   

We have the following well known isomorphism, see \cite{Bir} page 188, 
\begin{equation}
\label{braidmappingiso}
\ofg(\mathcal{M}_2, *)/(\Z/2) \cong SB_6 
\end{equation}
where $\Z/2=\langle \sigma\rangle$ and $\sigma$ is the hyperelliptic involution.

Consider the following map 
\begin{equation}
p'_I \colon [1, (1-\epsilon)^{-1}] \to \PP \SL(2, \C) \quad s \mapsto \begin{bmatrix} s^{1\over2} & 0\\ 0 & s^{-{1\over2}} \end{bmatrix}.
\end{equation}
Let $p_I \subset \cC$ be the path obtained by applying the family of M\"obius transformations $p'_I$ to the point $\tilde\psi(1-\epsilon) \in \cC$. 
\begin{proposition}
The following
\begin{equation}
\label{tildegamma}
\tilde\gamma:=\nu'(\tilde\psi((\tilde{\gamma}_0)^{-1})))\cdot p_I \cdot \tilde\psi(\tilde{\gamma}_1) \cdot \tilde\psi(\tilde{\gamma}_0) \subset \cC
\end{equation}
is a connected path with initial point $\tilde\psi(0)$, moreover $\pi_{\mathcal{M}_2}(\tilde\gamma) = \phi(\gamma)$. 
\end{proposition}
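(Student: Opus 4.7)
The plan is to verify three separate claims: (1) the composition $\tilde\gamma$ has initial point $\tilde\psi(0)$; (2) the four pieces actually concatenate into a connected path in $\cC$; and (3) the projection $\pi_{\mathcal{M}_2}(\tilde\gamma)$ equals $\phi(\gamma)$. Claim (1) is immediate: reading the concatenation right-to-left, the rightmost piece is $\tilde\psi(\tilde\gamma_0)$, and since $\tilde\gamma_0$ begins at $0\in\tilde\chi$, the composition begins at $\tilde\psi(0)$.

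Next I would attack claim (3), which I expect to be the easier of the two substantive claims. The key observation is that $\pi_{\mathcal{M}_2}\colon \cC_\infty\to \mathcal{M}_2$ is invariant under the $\PP\SL(2,\C)\times S_6$-action. Thus, segment by segment: Lemma \ref{900} together with the commutativity of diagram \eqref{CD} gives $\pi_{\mathcal{M}_2}(\tilde\psi(\tilde\gamma_0))=\phi(\gamma_0)$ and $\pi_{\mathcal{M}_2}(\tilde\psi(\tilde\gamma_1))=\phi(\gamma_1)$; the path $p_I$ lies in a single $\PP\SL(2,\C)$-orbit by construction and so projects to a constant loop in $\mathcal{M}_2$; finally, since $\nu'\in S_6$ acts trivially on $\mathcal{M}_2$, we get $\pi_{\mathcal{M}_2}(\nu'(\tilde\psi(\tilde\gamma_0^{-1})))=\phi(\gamma_0^{-1})$. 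Composing,
\begin{equation*}
\pi_{\mathcal{M}_2}(\tilde\gamma)=\phi(\gamma_0^{-1})\cdot \phi(\gamma_1)\cdot \phi(\gamma_0)=\phi(\gamma_0^{-1}\cdot \gamma_1\cdot \gamma_0)=\phi(\gamma),
\end{equation*}
as desired.

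The real work is claim (2), connectedness at the three junctions. The first junction, between $\tilde\psi(\tilde\gamma_0)$ and $\tilde\psi(\tilde\gamma_1)$, is immediate since both meet at $\tilde\psi(1-\epsilon)$. The second junction, between $\tilde\psi(\tilde\gamma_1)$ and $p_I$, simply requires that $p_I$ be parameterized so as to begin at $\tilde\psi((1-\epsilon)^{-1})$, the endpoint of $\tilde\psi(\tilde\gamma_1)$; this is how the Möbius family $p'_I$ is meant to be applied. The delicate step is the third junction: the endpoint of $p_I$ must equal $\nu'(\tilde\psi(1-\epsilon))$, the starting point of $\nu'(\tilde\psi(\tilde\gamma_0^{-1}))$.

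This last check is the main obstacle. I would verify it by an explicit coordinate computation that is, in spirit, the $\tilde\psi$-analog of Lemma \ref{RI}. Writing $t=1-\epsilon$, one computes that the $i$-th coordinate of $(1-\epsilon)^{-1}\cdot\tilde\psi((1-\epsilon)^{-1})$ equals $1/(\zeta^{-i}+\zeta^i t)$ for $i=1,\dots,4$ and $1/(1+t)$ for $i=5$, while using $\zeta^{-i}=\zeta^{5-i}$ these are exactly the entries of $\nu'(\tilde\psi(t))$ in a rearranged order coming from the structure of $\nu'$ defined in Definition \ref{nusigma}. Thus after the terminal Möbius transformation $p'_I((1-\epsilon)^{-1})$ is applied, the orbit point agrees with $\nu'(\tilde\psi(1-\epsilon))$, and the concatenation is consistent in $\cC$ (modulo, where necessary, absorbing a residual $S_6$-permutation that is harmless both for connectedness in the quotient and for the computation of $\pi_{\mathcal{M}_2}$). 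The rest of the proof is then bookkeeping.
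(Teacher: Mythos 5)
Your overall strategy coincides with the paper's: the initial-point and projection claims are handled exactly as in the paper (which likewise observes that $p_I$ lies in a single $\PP\SL(2,\C)$-orbit and therefore projects to a constant path, and invokes Lemma \ref{900} for the other segments), and the connectedness claim reduces in both treatments to the identity relating $\tilde\psi(1-\epsilon)$ to $(1-\epsilon)^{-1}\cdot\tilde\psi((1-\epsilon)^{-1})$ up to a permutation. The paper supplements this with a pictorial tracking of the six points (Figures \ref{Fig1}--\ref{Fig3}); you replace that by a direct coordinate computation, which is correct as far as it goes: the $i$-th coordinate of $(1-\epsilon)^{-1}\tilde\psi((1-\epsilon)^{-1})$ is $1/(\zeta^{-i}+\zeta^{i}t)=\tilde\psi(t)_{5-i}$ for $i=1,\dots,4$, with the fifth and sixth coordinates unchanged.

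The gap is in your final parenthesis. The proposition asserts that $\tilde\gamma$ is a connected path in $\cC$ itself, not in $\mathcal{M}_2$ or in $\cC/S_6$, so a ``residual $S_6$-permutation'' is precisely what is \emph{not} harmless at the third junction: the terminal point of $p_I$ must equal $\nu'(\tilde\psi(1-\epsilon))$ on the nose. Your own computation shows that $(1-\epsilon)^{-1}\tilde\psi((1-\epsilon)^{-1})$ is $\tilde\psi(1-\epsilon)$ with its first four coordinates reversed, i.e.\ it is $\tau(\tilde\psi(1-\epsilon))$ for $\tau=(1\,4)(2\,3)$, whereas $\nu'$ of Definition \ref{nusigma} is $(2\,5)(3\,4)$. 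These are different permutations (conjugate under the cyclic coordinate shift that relates $\tilde\psi$ to the coordinatewise inverse of $\tilde\phi$), so you must either show that under the conventions in force the relevant permutation really is $\nu'$, or conclude that the permutation appearing in \eqref{tildegamma} has to be $\tau$ rather than $\nu'$ and adjust the statement accordingly; the adjustment is immaterial for $\pi_{\mathcal{M}_2}(\tilde\gamma)=\phi(\gamma)$ and for the subsequent monodromy computation, since every element of $S_6$ dies in $\mathcal{M}_2$, but it is exactly the content of the connectedness assertion. Dismissing this as bookkeeping concedes the one point the proposition is making, and pinning it down would in fact sharpen the paper's own proof, which asserts the displayed identity with $\nu'$ without tracking the shift of coordinates between $\tilde\phi$ and $\tilde\psi$.
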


\begin{proof}
Let us first analyze the image $\tilde\psi(\tilde\gamma_0)$. The initial point of the path $\tilde\psi(\tilde\gamma_0)$ is $\tilde\psi(0)$ which is shown in Figure \ref{Fig1}. Here the six points form a symmetric configuration by taking the positions at the fifth roots of unity and zero in the specified order.  
 \begin{figure}[ht]
\includegraphics[height=2.3in,width=2.3in,angle=0]{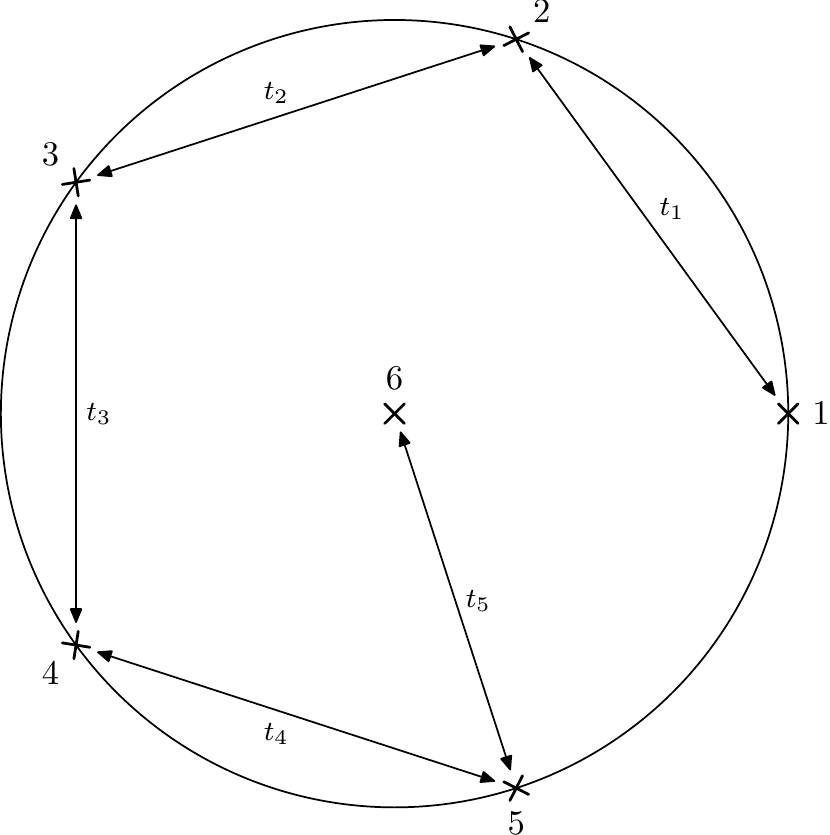}
\caption{The configuration at $\tilde\psi(0)$. The crosses represent the positions of the points and $t_i$ are the generators of the braid group}
\label{Fig1}
\end{figure}  

In Figure \ref{Fig2}  the dashed lines show the trajectory of the six points as one travels along $\tilde\psi(\tilde{\gamma}_0)$. At the end point of $\tilde\psi(\tilde{\gamma}_0)$, the points $z_1$ and $z_4$ are a small distance (depends on $\epsilon$) away from the positive real number ${1 \over 2 \mathrm{Re} (\zeta)}$ and a quick calculation shows that at the end points $z_1$ and $z_4$ are conjugate to each other, and both have their real parts bigger than ${1 \over 2 \mathrm{Re} (\zeta)}$.  Moreover, $z_1$ lies in lower half plane of $\C$ and $z_4$ lies in the upper half plane. This local  picture at ${1 \over 2 \mathrm{Re} (\zeta)}$ is shown in Figure \ref{Fig3}.  A similar story holds for the pair $z_2$ and $z_3$ around the negative real number ${1 \over 2 \mathrm{Re} (\zeta^2)}$. The point $z_5$ moves to ${1\over 2}$ and $z_6$ remains fixed at zero.  Notice also that the end point of $\tilde\psi(\tilde{\gamma}_0)$ is the initial point of $\tilde\psi(\tilde{\gamma}_1)$. 
\begin{figure}[ht] 
\includegraphics[height=3in,width=5in,angle=0]{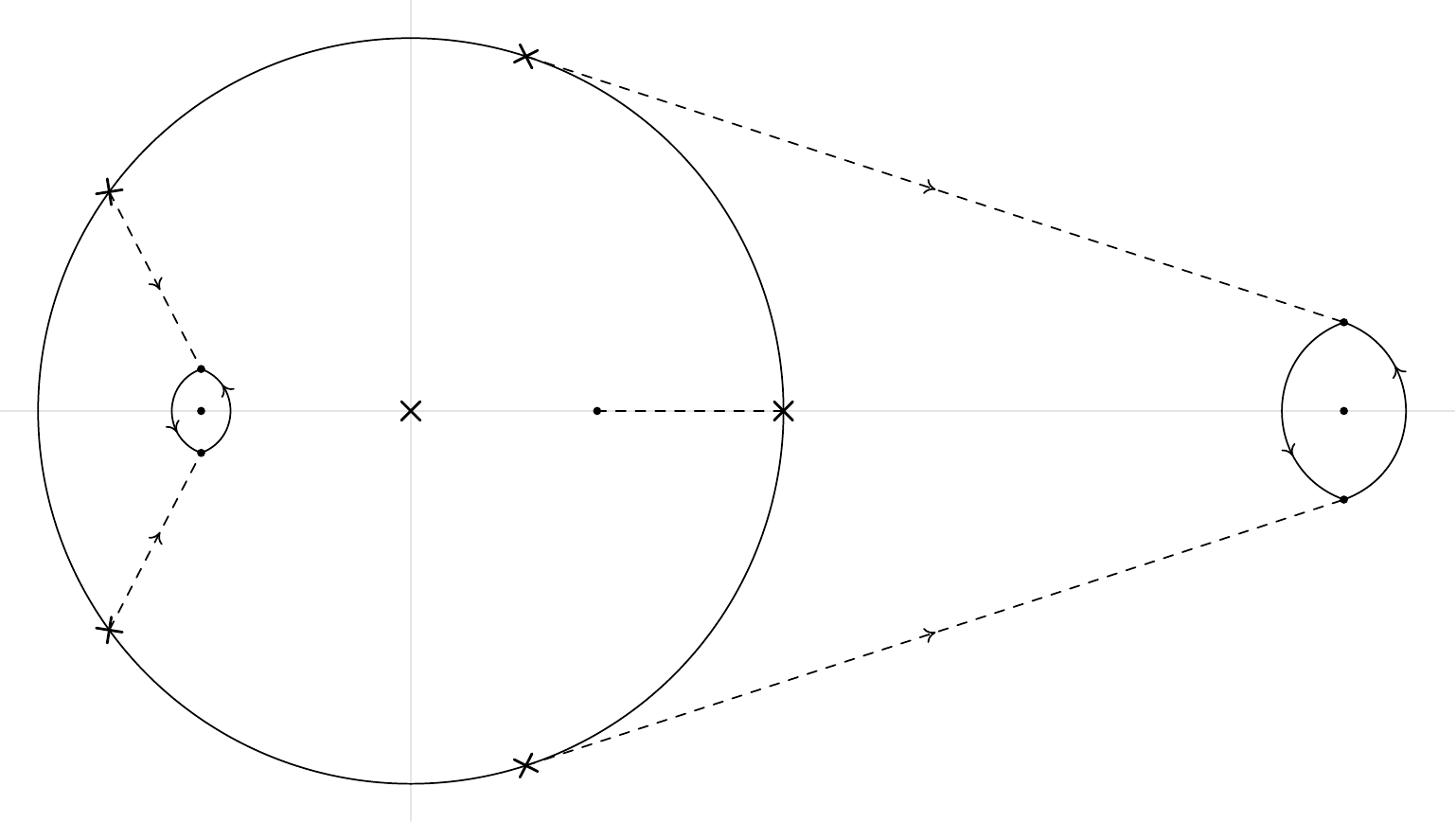}
\caption{}
\label{Fig2}
\end{figure}  
We now look at the configuration at the end point of $\tilde\psi(\tilde{\gamma}_1)$. Recall that $\tilde{\gamma}_1$ was a semi-circle traversed anti-clockwise around one from $(1- \epsilon)$ to $(1-\epsilon)^{-1}$. At the end point of $\tilde\psi(\tilde{\gamma}_1)$, the points $z_1$ and $z_4$ are still conjugate to each other but both have real parts smaller than ${1 \over 2\mathrm{Re}(\zeta)}$ and $z_1$ lies in the upper half plane and $z_4$ lies in the lower half plane. In effect, both $z_1$ and $z_4$ move in  an anti-clockwise semi-circle  around ${1\over 2\mathrm{Re}(\zeta)}$. This is also shown in figure \ref{Fig3}.    A similar story holds for the pair $z_2$ and $z_3$ around the negative real number ${1 \over 2\mathrm{Re}(\zeta^2)}$. The point $z_5$ moves to ${1\over 2}$ and $z_6$ remains fixed at zero.  Notice also that the end point of $\tilde\psi(\tilde{\gamma}_1)$ is not the initial point of $\tilde\psi(\tilde{\gamma}_0)^{-1}$. However, initial point of $p_I$ is the end point of $\tilde\psi(\tilde{\gamma}_1)$ and it follows from
\begin{equation*}
\tilde\psi(1-\epsilon)= \nu' (1-\epsilon)^{-1} (\tilde\psi(1-\epsilon)^{-1})
\end{equation*} 
that the configuration at the end point of $p_I$ differs from the configuration at the initial point of  $\tilde\psi(\tilde{\gamma}_0)^{-1}$ only by the permutation $\nu'$. 

The trajectory of $z_4$ and $z_1$ along the path $p_I$ is shown in red in figure \ref{Fig3}. A similar story holds for $z_2$ and $z_3$. The point $z_5$ moves a little to the right and $z_6$ is fixed at zero.    
\begin{figure}[ht] 
\includegraphics[height=3in,width=3in,angle=0]{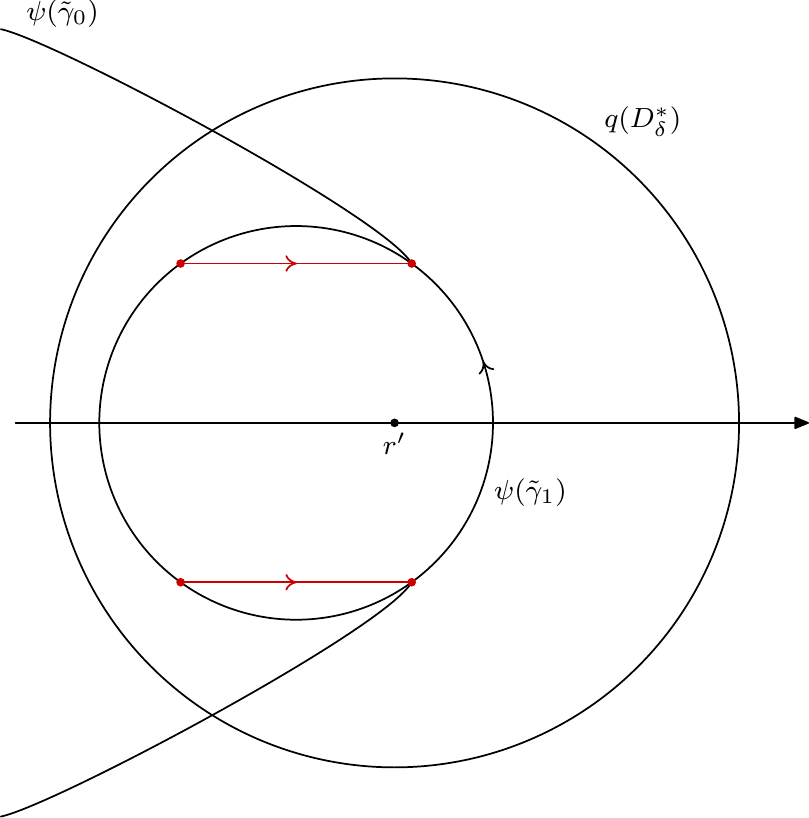}
\caption{}
\label{Fig3}
\end{figure}  

Since the end point of $p_I$ differs from the initial point of $(\tilde\psi(\tilde{\gamma}_0)^{-1})$ by the permutation $\nu'$, it follows that $\nu'(\tilde\psi(\tilde{\gamma}_0)^{-1}))\cdot p_I$ is a connected path with initial point $\tilde\psi((1-\epsilon)^{-1})$ and end point $\nu'(\tilde\psi(0)$. 

Since the initial point of  $\nu'(\tilde\psi(\tilde{\gamma}_0)^{-1}))\cdot p_I$ is the end point of $\tilde\psi(\tilde{\gamma}_1)\cdot \tilde\psi(\tilde{\gamma}_0)$ it follows that $\tilde\gamma$ is a connected path in $\cC$ with initial point $\tilde\psi(0)$ and end point $\nu'(\tilde\psi(0))$.  

Since $p_I$ is entirely contained in the $\PP\SL(2, \C)$ orbit, it follows that $\pi_{\mathcal{M}_2} (p_I)$ is a constant loop in $\mathcal{M}_2$.  This fact combined with lemma \ref{900} implies 
\begin{equation}
\pi_{\mathcal{M}_2}(\tilde\gamma)=\phi(\gamma).
\end{equation}

\end{proof}

Consider the following path in $\PP \SL(2, \C)$ 
\begin{equation}
p_R \colon [0, {1 \over 5}] \to \PP \SL(2, \C), \quad \quad s \mapsto \begin{bmatrix} e^{-  \pi i s} & 0 \\ 0 & e^{  \pi i s} \end{bmatrix}\,\,\, s\in [0, {1 \over 5}].
\end{equation}   
Denote also by $p_R$ the path in $\cC$ given by the action of $p_R$ on the point $\psi(0)$. 
\begin{lemma}
\label{60}
The path $p_R \subset \cC_{\infty}$ is connected with initial point $\widetilde{\psi}(0)$, moreover $\pi_{\mathcal{M}_2}(p_R)= \phi(a)$. 
\end{lemma}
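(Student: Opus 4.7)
The plan is to verify the two assertions separately, both following from the fact that $p_R$ lies in a single $\PP\SL(2,\C)$-orbit of $\widetilde{\psi}(0)$.

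First I would check connectedness with the stated initial point. The matrix entries $e^{\mp\pi i s}$ depend continuously on $s \in [0,1/5]$, so $p_R \colon [0,1/5] \to \PP\SL(2,\C)$ is a continuous path; since the natural $\PP\SL(2,\C)$-action on $\cC_{\infty}$ (coordinate-wise M\"obius transformation) is continuous, the path in $\cC_{\infty}$ obtained by acting on $\widetilde{\psi}(0)$ is continuous and hence connected. At $s=0$ the matrix is the identity, so the initial point is indeed $\widetilde{\psi}(0)$.

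Next I would prove that $\pi_{\mathcal{M}_2}(p_R) = \phi(a)$. By construction the entire image of $p_R$ in $\cC_{\infty}$ is contained in a single $\PP\SL(2,\C)$-orbit, and $\mathcal{M}_2$ is the quotient of $\cC_{\infty}$ by $\PP\SL(2,\C) \times S_6$; therefore $\pi_{\mathcal{M}_2}$ is constant along $p_R$, equal to $\pi_{\mathcal{M}_2}(\widetilde{\psi}(0))$. This is the same observation used in the preceding proposition to conclude that $\pi_{\mathcal{M}_2}(p_I)$ is a constant loop.

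It remains to identify that constant value with $\phi(a)$. By Lemma \ref{900} we have $\pi_{\mathcal{M}_2}(\widetilde{\psi}(\widetilde{\chi}_0)) \subset \phi(\chi)$, and more precisely $\widetilde{\psi}$ differs from $\widetilde{\phi}$ only by an element of $\PP\SL(2,\C) \times S_6$ (namely the inversion $z \mapsto 1/z$ together with a coordinate permutation), so $\pi_{\mathcal{M}_2}(\widetilde{\psi}(0)) = \pi_{\mathcal{M}_2}(\widetilde{\phi}(0))$. Commutativity of diagram \eqref{CD} then gives $\pi_{\mathcal{M}_2}(\widetilde{\phi}(0)) = \phi(\pi_{\widetilde{\chi}}(0)) = \phi(a)$, which completes the proof.

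There is no genuine obstacle here; the statement is essentially a bookkeeping lemma establishing that the one-parameter family of dilations $p_R$ realizes a lift (in $\cC_{\infty}$) of the constant path at $\phi(a)$, so that $p_R$ can subsequently be used to compute the monodromy of the projectively flat connection along the orbifold loop corresponding to the order-five element $R \in \ofg(\chi,a)$.
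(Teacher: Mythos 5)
Your proof is correct and follows essentially the same route as the paper's: the whole of $p_R$ lies in a single $\PP\SL(2,\C)$-orbit, so its image in $\mathcal{M}_2$ is the constant loop at $\pi_{\mathcal{M}_2}(\widetilde{\psi}(0))=\phi(a)$, and you fill in the base-point identification via Lemma \ref{900} and diagram \eqref{CD} more explicitly than the paper does. The only item the paper's proof records that you omit is that the endpoint of $p_R$ differs from $\widetilde{\psi}(0)$ by the permutation $\nu^{-1}$; this is not needed for the statement as written, but it is the fact that later makes $\rho^{\pkp}(R)=M_0^{\pkp}$ in the monodromy computation.
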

\begin{proof}
At both the initial and the end point of this path the first five points sit at the fifth roots of unity (sixth at zero) but with different ordering. The ordering differs by $\nu^{-1}$. Since the end points of $p_R$ are related by an element in $S_6$, $P_{\infty}(p_R) \subset \overline{\cC}$ is a closed loop. Moreover, since $p_R$ is entirely contained in the $\PP \SL(2, \C)$ orbit, $(\overline{P}_{\infty}\cdot P_{\infty})(p_R)$ is a constant path in $\mathcal{M}_2$, namely $\phi(a)$.   
\end{proof}

\begin{figure}[ht]
\includegraphics[height=2.3in,width=2.3in,angle=0]{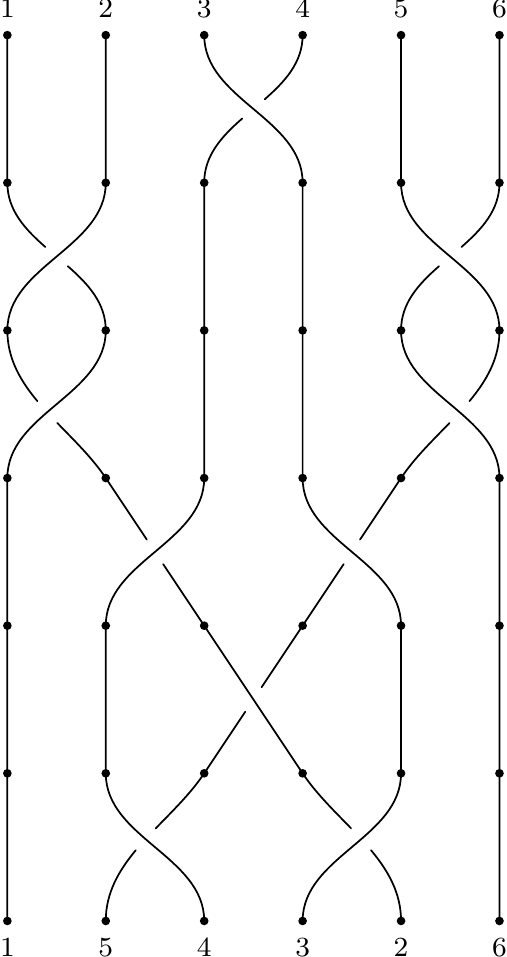}
\caption{The loop $\overline{\gamma}$ illustrated as the braid $t_4t_2t_3t_4t_2t_5t_5t_1t_1t_3$ from top to bottom}
\label{Fig5}
\end{figure}

We now prove theorem \ref{maintheorem} from the introduction. 

\begin{proof}{(Theorem \ref{maintheorem})}

We compute the parallel transport of $\bnabla^{\pkp}_{\cC}$ along $\tilde\gamma$ and $p_R$. Recall from \eqref{tildegamma}
\begin{equation}
\tilde\gamma:=\nu'(\psi((\tilde{\gamma}_0)^{-1})))\cdot p_I \cdot \psi(\tilde{\gamma}_1) \cdot \psi(\tilde{\gamma}_0) \subset \cC
\end{equation}
and $p_R$ is given in lemma \ref{60}.

We first observe the following

\begin{lemma}
The parallel transport of $\bnabla^{\pkp}_{\cC}$ along the paths $p_I$ and $p_R$ is projectively trivial. 
\end{lemma}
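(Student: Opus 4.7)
The plan is to compute the pullback of the connection $1$-form $\omega^{\pkp}$ to each of the paths $p_R$ and $p_I$ and show that it equals a scalar-valued $1$-form times $\Id^{\pkp}$, in which case the parallel transport equation integrates to a nonzero scalar multiple of $\Id^{\pkp}$ and is therefore projectively trivial. The key input is the first Ward identity of Proposition~\ref{ward}.

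First I would note that both paths remain inside $\cC$ (so that we can work directly with the flat connection $\bnabla^{\pkp}_0 = d + \omega^{\pkp}$ on the trivial bundle \eqref{tbc}) and are each contained in a single orbit of the dilation subgroup of $\PP\SL(2,\C)$: $p_R(s)$ is obtained from $\tilde\psi(0)$ by the dilation $z \mapsto e^{-2\pi i s}z$, while $p_I(s)$ is obtained from $\tilde\psi(1-\epsilon)$ by $z \mapsto sz$, with the last coordinate $z_6 = 0$ preserved in both cases. Let $V := \sum_{i=1}^{6} z_i\, \partial_{z_i}$ be the Euler vector field on $\cC$, which generates the dilation action. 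A direct contraction gives $\iota_V\bigl(\tfrac{dz_i - dz_j}{z_i - z_j}\bigr) = 1$ for every pair $1 \le i < j \le 6$ (including pairs with $z_6 = 0$, since $z_i \ne 0$ at the other index along both paths), and therefore
\begin{equation*}
\iota_V \omega^{\pkp} \;=\; \kbar \sum_{1 \le i < j \le 6} \widehat{\Omega}^{i,j}_k,
\end{equation*}
which is a scalar multiple of $\Id^{\pkp}$ by identity (1) of Proposition~\ref{ward}.

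Since the velocity vector of $p_R$ at parameter $s$ equals $-2\pi i\, V\vert_{p_R(s)}$ and the velocity vector of $p_I$ at $s$ equals $\tfrac{1}{s}\,V\vert_{p_I(s)}$, both pullbacks $p_R^*\omega^{\pkp}$ and $p_I^*\omega^{\pkp}$ are scalar-valued $1$-forms times $\Id^{\pkp}$. The parallel transport equation $\Phi'(s) = -p^*\omega^{\pkp}(\partial_s)\,\Phi(s)$ then reduces to a scalar linear ODE in each case, whose solution is an exponential of a scalar multiple of $\Id^{\pkp}$. Hence the parallel transport of $\bnabla^{\pkp}_{\cC}$ along either $p_R$ or $p_I$ is a nonzero scalar multiple of $\Id^{\pkp}$ and, in particular, projectively trivial. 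I do not expect any serious obstacle: the argument is routine once the Ward identity is invoked, the only sanity check being that pairs $(i,6)$ with $z_6 = dz_6 = 0$ still contribute $1$ under $\iota_V$, which holds because $z_i \ne 0$ throughout both paths.
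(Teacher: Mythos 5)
Your proof is correct, and it is in fact more complete than the one the paper gives. The paper's own proof is a one\--liner: it recalls that $\omega^{\pkp}$ is projectively invariant under the $\PP\SL(2,\C)$\--action (Proposition \ref{pfcc}) and observes that $p_R$ and $p_I$ lie in orbits of the dilation subgroup. Taken literally that is not yet an argument --- invariance of a connection under a one\--parameter group does not by itself make parallel transport along the orbits projectively trivial (consider $d+A\,\tfrac{dz}{z}$ on $\C^*$ with $A$ a non\--scalar constant matrix: it is dilation\--invariant, yet transport along a dilation orbit is $b^{A}$). What actually makes the lemma true is precisely the computation you carry out: the restriction of $\omega^{\pkp}$ to a dilation orbit, i.e.\ its contraction with the Euler field $V=\sum_i z_i\partial_{z_i}$, equals $\kbar\sum_{i<j}\widehat{\Omega}^{i,j}_k$, which is a scalar multiple of $\Id^{\pkp}$ by identity (1) of Proposition \ref{ward}; hence the transport is a scalar exponential. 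Your check that the pairs $(i,6)$ with $z_6=0$ still contribute $1$ under $\iota_V$, and your identification of the velocity fields of $p_R$ and $p_I$ as multiples of $V$, are exactly the details the paper suppresses. So the two arguments rest on the same underlying structure (the Ward identities, which also drive Proposition \ref{pfcc}), but yours supplies the missing step that turns ``the paths lie in dilation orbits'' into ``the transport is scalar.''
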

\begin{proof}
Recall that $\bnabla^{\pkp}_{\cC}:= d + \omega^{\pkp}$ and $\omega^{\pkp}$ is projectively invariant under the $\PP\SL(2, \C)$ on $\cC_{\infty}$. Lemma follows since both $p_R$ and $p_I$ are contained in the orbit of the dilation subgroup of  $\PP\SL(2, \C)$.
\end{proof}

In order to compute the parallel transport along $\tilde\gamma$ we must compute the parallel transport along all the paths separately appearing in the expression above. We compute the parallel transport along $\widetilde\psi(\tilde{\gamma}_1)$. The semi circle $\tilde{\gamma_1} \subset \tilde\chi$ is contained in the image of $B_1$, defined in \eqref{discs}, which is a punctured disk of radius $\epsilon$  based at $1 \in \tilde\chi$.

\begin{proposition}
\label{monodromygamma1}
The parallel transport along $(\widetilde\psi \circ B_1)^{*}(\psi(\tilde{\gamma}_1))$ with respect to $(\widetilde\psi \circ B_1) ^{*}\bnabla^{\pkp}$ is 
\begin{equation}
Q_1(-i\epsilon)\cdot \exp\big(-\pi i \kbar A_1 \big)\cdot Q_1(i \epsilon),
\end{equation}
where $Q_1(b_1)$ is the power series from proposition \ref{localsolutionproposition}.  
\end{proposition}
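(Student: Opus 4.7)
The strategy is to reduce the parallel-transport computation along the pulled-back semicircle to the local analysis of a connection with a regular singularity provided by Proposition \ref{localsolutionproposition}.

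First, I would observe that the pulled-back connection $(\widetilde\psi\circ B_1)^*\bnabla^{(k)}$ on the punctured disk $D_1^*$ coincides with $B_1^*\bnabla^{(k)}_{\tilde\chi}$, whose connection one-form is computed in \eqref{87} to be $\kbar\bigl(A_1/b_1+(\text{holomorphic near }0)\bigr)\,db_1$. This is exactly the setup of Proposition \ref{localsolutionproposition}, which supplies the canonical fundamental solution $Y_1(b_1)=Q_1(b_1)\,b_1^{\kbar A_1}$ with $Q_1(0)=\Id$, once the branch cut along the positive real axis is fixed.

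Next, parallel transport along a path in $D_1^*$ disjoint from the cut, from $b_a$ to $b_b$, is encoded by the ratio of the fundamental solution at the endpoints; writing $Y_1(b_b)\,Y_1(b_a)^{-1}$ and factoring gives the shape
\begin{equation*}
Q_1(b_b)\cdot b_b^{\kbar A_1}\,b_a^{-\kbar A_1}\cdot Q_1(b_a)^{-1}.
\end{equation*}
I would then identify the pulled-back semicircle as a path in $D_1^*$ with $b_a=i\epsilon$ and $b_b=-i\epsilon$, traversed so that $\arg b_1$ changes by $-\pi$ (a clockwise half-turn about the origin). On the chosen branch, this makes the central factor evaluate to $\exp(-i\pi\kbar A_1)$, and the claimed factorization emerges.

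The main obstacle is the geometric step: extracting from the explicit descriptions of $\tilde\gamma_1$ and $\widetilde\psi$ that the endpoints of the pulled-back semicircle in $D_1^*$ are precisely $\pm i\epsilon$ (on the imaginary axis), rather than $\pm\epsilon$ (on the real axis, as one might naively expect for a semicircle around $1$ in $\tilde\chi$), and that its orientation yields the sign $-\pi$ in the argument change. This requires carefully unpacking the M\"obius components of $\widetilde\psi$ near $t=1$, together with a disciplined choice of branch of the logarithm along the path. A secondary book-keeping concern is matching precisely the placement of $Q_1^{-1}$ versus $Q_1$ at the starting endpoint, which depends on the paper's convention for writing the parallel-transport operator.
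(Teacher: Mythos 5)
Your proposal follows essentially the same route as the paper: invoke the local normal form $Y_1(b_1)=Q_1(b_1)\,b_1^{\kbar A_1}$ of Proposition \ref{localsolutionproposition}, write the parallel transport along a path in $D_1^*$ as $Y_1(b_b)Y_1(b_a)^{-1}$, identify the endpoints of the pulled-back semicircle as $\pm i\epsilon$, and evaluate the middle factor $b_b^{\kbar A_1}b_a^{-\kbar A_1}$ via the change of argument on the chosen branch. So the skeleton is correct and matches the paper's.

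The two points you flag as unresolved are, however, exactly where the substance lies, and they cannot be waved off. First, the orientation: you posit a clockwise half-turn (argument change $-\pi$) in order to land on the $\exp(-\pi i\kbar A_1)$ of the statement, but the paper's proof asserts the opposite --- that the pulled-back semicircle runs \emph{anticlockwise} from $i\epsilon$ to $-i\epsilon$ while avoiding the positive real axis, so that $\log(-i\epsilon)=\log(i\epsilon)+\pi i$ and the central factor is $\exp(+\pi i\kbar A_1)$. (The paper is internally inconsistent here: its derivation produces $+\pi i$ while its proposition statement carries $-\pi i$; neither the paper nor your proposal actually carries out the computation, via the M\"obius components of $\widetilde\psi$ near $t=1$, that would pin down both the endpoints $\pm i\epsilon$ and the sense of traversal, and without it the sign is not determined.) Second, the bookkeeping issue you mention is also a genuine discrepancy rather than a convention: the formula $Y_1(b_b)Y_1(b_a)^{-1}$ forces the right-hand factor to be $Q_1(i\epsilon)^{-1}$, which is what the paper's derivation gives, whereas the proposition statement writes $Q_1(i\epsilon)$ with no inverse. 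In short, your argument reproduces the paper's proof up to the two steps you explicitly deferred, and those are precisely the steps at which the paper's statement and its own proof disagree; a complete proof must supply the explicit local computation of $(\widetilde\psi\circ B_1)^{*}$ applied to $\tilde\gamma_1$ to fix the endpoints and orientation unambiguously.
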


\begin{remark}
We assume the $\epsilon$ in the definition of $B_1$ to be bigger than the $\epsilon$ in the definition of $\tilde{\gamma}_1$. The $\epsilon$ appearing in the proposition refers to the $\epsilon$ for $\tilde{\gamma}_1$.
\end{remark}

\begin{proof}
From proposition \ref{localsolutionproposition} we know that the parallel transport of \\$(\widetilde\psi \circ B_1 )^{*}\bnabla^{\pkp}$ is given by a solution of the form $Y_1(b_1)$ shown in \eqref{localsolution}.  The pull back $(\widetilde\psi \circ B_1)^{*}(\psi(\tilde{\gamma}_1))$ is a semi circle in $D_1$ traveling in an anti-clockwise direction from $i\epsilon$ to $-i \epsilon$, avoiding the positive real axis in $D_1$ (which was our choice for the branch cut of the logarithm on $D_1$). Thus the parallel transport is given by  
\begin{align*}
Y_1(-i\epsilon)(Y_1(i\epsilon))^{-1} &= Q_1(-i\epsilon) (- i \epsilon)^{\kbar A_1} (i\epsilon^{-\kbar A_1})Q_1(i\epsilon)^{-1} \\
& = Q_1(-i\epsilon) \exp(log(-i\epsilon) \kbar A_1)\exp(-log (i\epsilon) \kbar A_1) Q_1(i\epsilon)^{-1}\\
&= Q_1(-i\epsilon) \exp((log(i\epsilon) + \pi i)\kbar A_1) \exp(-log (i\epsilon) \kbar A_1) Q_1(i\epsilon)^{-1}\\
&=Q_1(-i\epsilon) \exp(\pi i \kbar A_1) Q_1(i\epsilon)^{-1},
\end{align*}
where the first equality is from \eqref{localsolution} and third equality follows from our choice of the branch cut for the logarithm.  
\end{proof}

For the parallel transport of $\bnabla^{\pkp}_{\cC}$ along $\psi(\tilde\gamma_0)$, where recall  that $\tilde\gamma_0 \subset \tilde\chi$ is the interval $[0, 1 - \epsilon]$, we take the pull back $\psi^*(\bnabla^{\pkp})$, which is given in \eqref{psipullback},  restrict it to the interval $[0, 1 - \epsilon]$, and compute the monodromy there. This  monodromy was computed in section \ref{iteratedintegralsection} and is given by $\Phi^{\pkp}_{\epsilon}(1- \epsilon)$ c.f. (\ref{30}).

 Now, it follows that under the map \eqref{monodromyrep} we have that 
 \begin{equation}
 \rho^{\pkp}(R) = M^{\pkp}_0
\end{equation}
since parallel transport along $p_R$ is projectively trivial. Here $M_0^{\pkp}$ denotes the $k$-th symmetric power of the endomorphism $M_0$ from lemma \ref{m0m1}.  
 Likewise we have that 
 \begin{align*}
\rho^{\pkp}(\phi(\gamma)) &=& M^{\pkp}_1 \cdot \nu' (\Phi_{\epsilon}^{\pkp}(1 - \epsilon))^{-1} \cdot Q_1(-i\epsilon) \cdot exp\big(-\pi i \kbar A_1 \big)\cdot (Q_1(i \epsilon))^{-1}\cdot \Phi_{\epsilon}^{\pkp}(1 - \epsilon)\\
&=& (\Phi_{\epsilon}^{\pkp}(1 - \epsilon))^{-1} \cdot M_1^{\pkp}\bigg(Q_1(-i\epsilon) \cdot exp\big(-\pi i \kbar A_1 \big)\cdot Q_1(i \epsilon)^{-1}\bigg)\cdot \Phi_{\epsilon}^{\pkp}(1 - \epsilon)
\end{align*}
where we used the fact that $M_1^{\pkp}$ commutes with the parallel transport, see \cite{GJ}.
This commutation also had the effect of
removing $\nu'$ from 
$\nu'(\Phi_{\epsilon}^{\pkp}(1 - \epsilon))^{-1})$ because $M_1$ maps to $\nu'$
under the map $A(G) \to \mathcal{S}_6$.

In \cite{enr}, see also \cite{Kass} and \cite{LD},  it is shown that

\begin{align*}
&\lim_{\epsilon \to 0}&\Big((\Phi_{\epsilon}^{\pkp}(1 - \epsilon))^{-1} \cdot M_1^{\pkp}\bigg(Q_1(-i\epsilon) \cdot exp\big(-\pi i \kbar A_1 \big)\cdot Q_1(i \epsilon)^{-1}\bigg)\cdot \Phi_{\epsilon}^{\pkp}(1 - \epsilon)\Big) \\
&=& (\Phi^{\pkp})^{-1}\cdot \bigg((M^{\pkp}_1)^{-1}\cdot exp\big( -\pi i \kbar A_i\big)\bigg) \cdot \Phi^{\pkp} \\
&=& \rho^{\pkp}(\phi(\gamma))
\end{align*}
where $\Phi^{\pkp}$  is defined in \eqref{911}.  

Replacing $R$ with $U$ and $\phi(\gamma)$ with $T$ under the isomorphism $\ofg(\chi, a) \cong \triangle(2, 5, \infty)$ we get the statement of theorem \ref{maintheorem}. 
\end{proof}

\bibliographystyle{alpha}
\bibliography{references}

\end{document}